\numberwithin{equation}{section}
\newtheorem{theorem}{Theorem}[section] 
\newtheorem{lemma}{Lemma}[section] 
\newtheorem{proposition}{Proposition}[section] 
\newtheorem{corollary}{Corollary}[section] 
\newtheorem{remark}{Remark}[section]
\newcommand*{\C}{\mathbb{C}}
\newcommand*{\R}{\mathbb{R}}
\newcommand*{\Z}{\mathbb{Z}}
\newcommand{\comment}[1]{}
\title[An inverse problem for a class of lacunary canonical systems with diagonal Hamiltonian]%
      {An inverse problem for a class of lacunary canonical systems with diagonal Hamiltonian} 
\author[M. Suzuki]{Masatoshi Suzuki}
\subjclass[2010]{34A55, 31A10, 34L40}
\keywords{}
\begin{abstract}
Hamiltonians are 2-by-2 positive semidefinite real symmetric matrix-valued functions satisfying certain conditions. 
In this paper, we solve 
the inverse problem 
for which recovers a Hamiltonian from the solution of a first-order system 
attached to a given Hamiltonian, 
consisting of ordinary differential equations parametrized 
by a set of complex numbers,  
under certain conditions for the solutions. 
This inverse problem is a generalization 
of the inverse problem for two-dimensional canonical systems. 
\end{abstract}
\begin{document}

%
%
\section{Introduction} \label{section_1} 
%
%

In this paper, we generalize the theory on the inverse problem for 
a class of two-dimensional canonical systems in \cite{Su19_1}. 
%
A $2 \times 2$ real symmetric matrix-valued function $H$ defined on 
an interval $I=[t_0,t_1)$ $(-\infty<t_0<t_1 \leq \infty)$ is called a {\it Hamiltonian} 
if $H(t)$ is positive semidefinite for almost all $t \in I$, 
$H$ is not identically zero on any subset of $I$ with positive Lebesgue measure, 
and $H$ belongs to $L^1([t_0,c),\R^{2\times 2})$ for any $t_0<c<t_1$.
The first-order system 
\begin{equation} \label{s105} 
-\frac{d}{dt}
\begin{bmatrix}
A(t,z) \\ B(t,z)
\end{bmatrix}
= z 
\begin{bmatrix}
0 & -1 \\ 1 & 0
\end{bmatrix}
H(t)
\begin{bmatrix}
A(t,z) \\ B(t,z)
\end{bmatrix}
\end{equation}
associated with a Hamiltonian $H$ on an interval $I$ parametrized by {\it all} $z \in \C$ 
is called a {\it canonical system} on $I$. 
If the range of $z$ is restricted to a subset of $\C$, 
such a system is called a {\it lacunary} canonical system (in this paper).
Note that definition \eqref{s105} is different from the usual definition with a minus sign on the right 
for consistency with \cite{Su19_1}.  
A typical source of Hamiltonians is entire functions of the {\it Hermite--Biehler class}, 
which is the set $\overline{\mathbb{HB}}$ of all entire functions satisfying 
\begin{equation} \label{s103}
|E^\sharp(z)| < |E(z)|  \quad \text{for all $z \in \C_+$}, 
\end{equation}
and the subset $\mathbb{HB}$ of $\overline{\mathbb{HB}}$ 
consisting of $E$ such that $E(z)\not=0$ for any $z \in \R$, 
where $\C_+ = \{z\,|\,\Im(z) > 0\}$ is the upper half-plane and 
$F^\sharp(z):=\overline{F(\bar{z})}$, the notation is often used in this paper.  
Every $E \in \mathbb{HB}$ generates a de Branges space $\mathcal{H}(E)$ 
which is a reproducing kernel Hilbert space of entire functions. 
%
Every de Branges space $\mathcal{H}(E)$ has a unique maximal chain 
of de Branges subspaces $\mathcal{H}(E_t)$ parametrized by $t$ in an interval $I$ 
such that $\mathcal{H}(E_t)$ contained isometrically in $\mathcal{H}(E)$ for all regular $t \in I$ 
(see \cite[Section 1]{Su19_1} for the definition of regular points). 
For the generating functions $E_t$, $A_t=(E_t+E_t^\sharp)/2$ and $B_t=i(E_t-E_t^\sharp)/2$ 
satisfy a canonical system on the interval $I$ 
associated with some Hamiltonian $H$.  
Such Hamiltonian is called the {\it structure Hamiltonian} of $\mathcal{H}(E)$. 
Recently, a complete characterization of structure Hamiltonians of de Branges spaces 
is obtained by Romanov--Woracek~\cite{RomWor19}. 
The inverse problem for recovering the structure Hamiltonian from given $E \in \mathbb{HB}$ 
has been studied by many authors; 
see the references cited just below \cite[Theorem dB]{Su19_1}, 
for example. 
Also, in \cite{Su19_1}, a method of recovering $H$ 
from $E \in \mathbb{HB}$ satisfying some specific conditions was discussed.

However, if $E$ does not necessarily belong to $\overline{\mathbb{HB}}$, 
nothing can be said in general about whether Hamiltonian $H$ exist 
such that a solution of the first-order system \eqref{s105} 
associated with $H$ recovers $E$.  
%
In this paper, we prove that if we assume several conditions on a function $E$, 
a Hamiltonian $H_E$ is obtained from $E$ by an explicit way of the construction, 
and a solution of a lacunary canonical system associate with $H_E$ 
recovers $E$, 
even though $E$ does not necessarily belong to the Hermite--Biehler class. 
Those conditions introduced below may look artificial, 
but they naturally arise from number theory; 
see the final part of the introduction. 
We should emphasize that an example there shows that 
our method of constructing a Hamiltonian 
actually provides a Hamiltonian that is {\it not} a structure Hamiltonian of de Branges spaces.
Also, as explained after Theorem 1.1 in \cite{Su19_1}, 
the Hamiltonian obtained by our method is not ``trivial'' one.  

As usual, we denote by 
\begin{equation} \label{eq_Fourier}
({\mathsf F}f)(z) 
 = \int_{-\infty}^{\infty} f(x) \, e^{ixz} \, dx, \quad 
({\mathsf F}^{-1} g)(z) 
 = \frac{1}{2\pi}\int_{-\infty}^{\infty} g(u) \, e^{-ixu} \, du
\end{equation}
the Fourier integral and inverse Fourier integral, respectively, 
and use the Vinogradov symbol ``$\ll$'' to estimate a function. 
\smallskip

We introduce the following six conditions for a function $E$: 
\begin{enumerate}
\item[(K1)] There exists $c>0$ and a discrete subset $0\not\in\mathcal{Z} \subset \C$ 
(which is possibility empty or infinite) 
of the  horizontal strip $ -c \leq \Im z \leq c$ such that 
it is closed under the complex conjugation $z \mapsto \bar{z}$ 
and the negation $z \mapsto -z$ 
and $E$ is analytic in $\C\setminus\mathcal{Z}$ 
and that $E$ satisfies  $E^\sharp(z)=E(-z)$ for $z \in \C\setminus\mathcal{Z}$; 
%
\item[(K2)] There exists a real-valued continuous function $K$ defined on the real line 
such that $|K(x)| \ll \exp(c|x|)$ as $|x| \to \infty$ 
and that $E^\sharp(z)/E(z)= (\mathsf{F}K)(z)$ holds for $\Im(z)>c$, where $c$ is the constant in (K1).
\item[(K3)] $K$ vanishes on the negative real line $(-\infty,0)$;  
\item[(K4)] $K$ is continuously differentiable outside a discrete subset $\Lambda \subset \R$ 
and the derivative $K'$ belongs to $L_{\rm loc}^1(\R)$; 
%
\item[(K5)] There exists $0<\tau \leq \infty$ such that both $\pm 1$ are not eigenvalues of 
the operator 
$
\mathsf{K}[t]: f(x) \mapsto
\mathsf{1}_{(-\infty,t)}(x)\, \int_{-\infty}^{t} K(x+y) \, f(y) \, dy
$
 on $L^2(-\infty,t)$ for every $0 \leq t < \tau$, 
where $\mathsf{1}_A$ is the characteristic function of $A \subset \R$; 
\item[(K6)] The function $E^\sharp(z)/E(z)$ can not be expressed as a ratio of two entire functions of exponential type.
\end{enumerate}
Condition (K1) is more general than that in \cite{Su19_1}, 
because it is allowed that $E$ has an essential singularity in $\mathcal{Z}$. 
Conditions (K2)$\sim$(K5) are the same as in \cite{Su19_1}. 
Condition (K6) is added in this paper for convenience, but it is rarely used. 
The operators $\mathsf{K}[t]$ in (K5) are well-defined, 
Hilbert-Schmidt, and self-adjoint under (K2) and (K3) (\cite[Section 1]{Su19_1}). 
Therefore, the spectrum of $\mathsf{K}[t]$ consists only of real eigenvalues of finite multiplicity and $0$. 
Moreover, $\mathsf{K}[t]=0$ for $t \leq 0$ under (K2) and (K3), 
hence the requirement for eigenvalues of $\mathsf{K}[t]$ in (K5) is trivial for $t \leq 0$. 
For a positive $t$, 
it is not difficult to show that (K5) holds for a sufficiently small  $\tau>0$ under (K2) and (K3).
%
The set of functions satisfying (K1)$\sim$(K6) is not empty but a large; see the final part of the introduction. 
\medskip

Now we assume that $E$ satisfies (K1)$\sim$(K5) and define 
\begin{equation} \label{s333}
H(t):= \begin{bmatrix} 1/\gamma(t) & 0 \\ 0 & \gamma(t) \end{bmatrix}, \qquad 
\gamma(t):=m(t)^2, \qquad 
m(t):=\frac{\det(1+\mathsf{K}[t])}{\det(1-\mathsf{K}[t])},  
\end{equation}
where ``$\det$'' stands for the Fredholm determinant. 
Then $\gamma(t)$ is a continuous positive real-valued function on $I_\tau=(-\infty,\tau)$ 
(Theorem \ref{thm_5} and Proposition \ref{prop_190620_1}). 
Therefore $H$ is a Hamiltonian on $I_\tau$ consisting of continuous functions. 
%
\comment{
For a Hamiltonian $H$ on $I$, 
a nonempty open subinterval $J$ of $I$ is called {\it $H$-indivisible}, 
if the equality  
$H(t)=h(t)\xi_\theta{}^{\rm t}\xi_\theta$, 
 $\xi_\theta={}^{\rm t}[\cos\theta,\sin\theta]$, 
holds on $J$ for some positive scalar function $h(t)$ 
and some fixed angle $0 \leq \theta < \pi$. 
We have $\det H(t)=0$ and 
${\rm rank}\,H(t)=1$ on an $H$-indivisible interval. 
A point $t \in I$ is called {\it regular} 
if it is not an inner point of any $H$-indivisible interval, 
otherwise, $t$ is called {\it singular}. 
A Hamiltonian defined by \eqref{s335} and \eqref{s333} 
has no $H$-indivisible intervals, that is, all points of $[0,\tau)$ are regular. 
}

The direct problem for the lacunary canonical system on $I_\tau$ 
associated with $H$ of \eqref{s333} 
and the vector ${}^{t}[A(z)~B(z)]$ as intitial condition at $t=0$ 
can be solved explicitly as follows, 
where $A$ and $B$ are defined by \eqref{s104} below. 
Then the solution of the lacunary canonical system on $I_\tau$ 
associated with $H$ of \eqref{s333} 
recovers the original $E$.  
The solution of the first-order system is 
explicitly described by using the unique solutions of the integral equations 
\begin{equation} \label{s304_1}
\Phi(t,x) + \int_{-\infty}^{t} K(x+y) \Phi(t,y) \, dy = 1,  
\end{equation}
\begin{equation} \label{s304_2}
\Psi(t,x) - \int_{-\infty}^{t} K(x+y) \Psi(t,y) \, dy = 1. 
\end{equation}

\begin{theorem} \label{thm_1}
Let $E$ be a function satisfying (K1)$\sim$(K5), and define $A$ and $B$ by
\begin{equation} \label{s104}
A(z) := \frac{1}{2}(E(z)+E^\sharp(z)) \quad \text{and} \quad 
B(z) := \frac{i}{2}(E(z)-E^\sharp(z)).
\end{equation}
Let $H$ be the Hamiltonian on $I_\tau=(-\infty,\tau)$ defined by \eqref{s333}.  
Let $\Phi(t,x)$ and $\Psi(t,x)$ be the unique solutions of \eqref{s304_1} and \eqref{s304_2}, respectively.  
Define $A(t,z)$ and $B(t,z)$ by 
\begin{equation} \label{eq_190625_1}
\aligned
A(t,z)
& = - \frac{iz}{2}E(z) \int_{t}^{\infty} \Psi(t,x) e^{izx} \, dx,
\\ 
-i B(t,z) 
& = - \frac{iz}{2}E(z) \int_{t}^{\infty} \Phi(t,x) e^{izx} \, dx. 
\endaligned
\end{equation}
Then,  
\begin{enumerate}
\item for each $t \in I_\tau$, $A(t,z)$ and $B(t,z)$ are well-defined for $\Im(z)>c$ 
and extend to analytic functions on $\C\setminus\mathcal{Z}$ 
satisfying 
$A(t,-z)=A(t,z)$, $B(t,-z)=-B(t,z)$, 
$A(t,z)=\overline{A(t,\bar{z})}$, and $B(t,z)=\overline{B(t,\bar{z})}$, 
%
\item for each $z \in \C\setminus\mathcal{Z}$, 
$A(t,z)$ and $B(t,z)$ are continuously differentiable with respect to $t$, 
\item $A(t,z)$ and $B(t,z)$ solves the first-order system \eqref{s105} associated with $H$ on $I_\tau$ 
for every $z \in \C\setminus\mathcal{Z}$
%
\item $A(z)=A(0,z)$, $B(z)=B(0,z)$ and $E(z)=A(0,z) - i B(0,z)$.
\end{enumerate} 
\end{theorem}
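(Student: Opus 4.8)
The plan is to establish the four claims by first developing the analytic properties of $\Phi(t,x)$ and $\Psi(t,x)$, then translating these into properties of the transforms $A(t,z)$ and $B(t,z)$, and finally verifying the differential system by direct differentiation. I would begin by recording the basic structure of the resolvent kernels. Under (K2)--(K3), the operator $\mathsf{K}[t]$ is Hilbert--Schmidt and self-adjoint on $L^2(-\infty,t)$, and by (K5) neither $+1$ nor $-1$ is an eigenvalue for $t<\tau$, so $(1\pm\mathsf{K}[t])^{-1}$ exist as bounded operators; hence \eqref{s304_1} and \eqref{s304_2} are uniquely solvable, and $\Phi(t,\cdot),\Psi(t,\cdot)$ lie in the appropriate weighted spaces with the exponential growth controlled by the constant $c$. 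The fact that $K$ vanishes on $(-\infty,0)$ by (K3) means the convolution variable $x+y$ only contributes when $x+y\ge 0$, which is what makes the integrals in \eqref{eq_190625_1} over $[t,\infty)$ converge for $\Im(z)>c$ and gives the stated domain of definition. The symmetry properties in claim (1) should follow from the reality of $K$ (so $\Phi,\Psi$ are real-valued) together with the relation $E^\sharp(z)=E(-z)$ from (K1): taking complex conjugates and using $\overline{E(\bar z)}=E^\sharp(z)$ yields $A(t,z)=\overline{A(t,\bar z)}$ and similarly for $B$, while the parity statements $A(t,-z)=A(t,z)$, $B(t,-z)=-B(t,z)$ come from combining these with the relation between $A(t,z)$, $B(t,z)$ and the pair $E$, $E^\sharp$; the analytic continuation to $\C\setminus\mathcal{Z}$ is inherited from that of $E$ and $E^\sharp$, since everything else in \eqref{eq_190625_1} is entire in $z$.

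Next, for claim (2), I would differentiate the integral equations \eqref{s304_1} and \eqref{s304_2} with respect to the parameter $t$. Using (K4), which guarantees $K$ is $C^1$ off a discrete set with locally integrable derivative, together with the invertibility of $1\pm\mathsf{K}[t]$, one obtains that $\partial_t\Phi(t,x)$ and $\partial_t\Psi(t,x)$ exist and satisfy related integral equations; the key identities are the ones expressing $\partial_t\Phi$ and $\partial_t\Psi$ in terms of the boundary values $\Phi(t,t)$, $\Psi(t,t)$ and the resolvent kernels. These ``diagonal'' boundary values are precisely what produces the functions $m(t)$: indeed the standard Dyson-type formulas identify $\Phi(t,t)$ and $\Psi(t,t)$ with logarithmic derivatives of the Fredholm determinants $\det(1\pm\mathsf{K}[t])$, so that the combination appearing in $\gamma(t)=m(t)^2$ is exactly what enters the differentiated relations. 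Then $A(t,z)$, $B(t,z)$ defined by \eqref{eq_190625_1} are $C^1$ in $t$ because the integrands are, uniformly on compacta in $z$ away from $\mathcal{Z}$.

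For claim (3), the strategy is to differentiate \eqref{eq_190625_1} with respect to $t$, substitute the $t$-derivatives of $\Phi$ and $\Psi$ just computed, integrate by parts using the boundary terms at $x=t$, and recognize the right-hand side of \eqref{s105} with $H$ as in \eqref{s333}. Concretely, $-\partial_t A(t,z)$ should collapse, after integration by parts and using $\partial_t\Psi$, to $z\,\gamma(t)\cdot(-iB(t,z))$ up to the identifications $1/\gamma(t)$, $\gamma(t)$ coming from the boundary values $\Phi(t,t),\Psi(t,t)$; the symmetric computation gives $-\partial_t(-iB(t,z))=z\,(1/\gamma(t))\,A(t,z)$, which is exactly the system \eqref{s105} written out in the diagonal case. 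This bookkeeping---keeping track of which boundary value yields $\gamma$ versus $1/\gamma$ and matching the factor of $z$ and the off-diagonal sign---is the main obstacle, and it is where the precise normalizations in \eqref{eq_190625_1} (the prefactor $-\tfrac{iz}{2}E(z)$) earn their keep.

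Finally, claim (4) is the limiting case $t\to 0$. Since $\mathsf{K}[0]=0$ by (K2)--(K3), equations \eqref{s304_1}--\eqref{s304_2} give $\Phi(0,x)=\Psi(0,x)=1$ for $x\ge 0$ (and the relevant integrals are over $[0,\infty)$), so \eqref{eq_190625_1} reduces to $A(0,z)=-\tfrac{iz}{2}E(z)\int_0^\infty e^{izx}\,dx$ and likewise for $-iB(0,z)$ with the same integrand; evaluating $\int_0^\infty e^{izx}\,dx = i/z$ for $\Im(z)>0$ gives $A(0,z)=\tfrac12 E(z)$ and $-iB(0,z)=\tfrac12 E(z)$, which does not immediately match \eqref{s104}. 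The resolution is that one must more carefully use $E^\sharp(z)/E(z)=(\mathsf F K)(z)$ from (K2): writing $\Psi(0,x)=1$ only on $[0,\infty)$ but feeding the full integral equation structure back in, the transform $\int_t^\infty \Psi(t,x)e^{izx}\,dx$ at $t=0$ must be computed as a boundary value of the analytic continuation, and the identity $A(z)-iB(z)=E(z)$ together with $A(z)+iB(z)=E^\sharp(z)$ forces the claimed normalization. Thus I would prove (4) by showing that the functions $A(t,z),B(t,z)$ satisfy, as $t\to 0^+$, the same integral equation that characterizes $A(z),B(z)$ via \eqref{s104} and (K2), appealing to uniqueness; the continuity in $t$ from claim (2) then closes the argument. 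I expect the interplay in claim (4) between the naive pointwise limit $\Phi(0,\cdot)=\Psi(0,\cdot)=1$ and the correct analytic normalization to require care, but it is softer than the computation underlying claim (3).
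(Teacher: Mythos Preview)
Your outline for (2) and (3) matches the paper's argument closely, but there are two genuine gaps.

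\textbf{Claim (1): analytic continuation.} You write that the continuation to $\C\setminus\mathcal{Z}$ ``is inherited from that of $E$ and $E^\sharp$, since everything else in \eqref{eq_190625_1} is entire in $z$.'' This is false: the half-line integral $\int_t^\infty \Psi(t,x)e^{izx}\,dx$ only converges for $\Im(z)>c$ (because $\Psi(t,x)$ grows like $e^{c'x}$), and it has no continuation below that line without further input. The paper obtains the continuation by going back to the integral equation \eqref{s304_2}: writing $\mathbf{1}_{[t,\infty)}\Psi(t,\cdot)$ as $\mathbf{1}_{[-t,\infty)}-\mathbf{1}_{[-t,t)}\Psi+\int_{-\infty}^t K(\cdot+y)\Psi(t,y)\,dy$ and taking the Fourier transform, the last term becomes $\Theta(z)$ times a finite integral (using (K2) and (K3)). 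The result is an explicit formula for $A(t,z)$ as a combination of $E(z)$ and $E(-z)$ with coefficients that are integrals over the \emph{compact} interval $[-t,t]$, hence entire. The parity $A(t,-z)=A(t,z)$ is then read off from that formula; it does not follow from \eqref{eq_190625_1} directly. Without this rewriting you have neither the continuation nor the parity.

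\textbf{Claim (4): the value at $t=0$.} Your computation $\Phi(0,x)=\Psi(0,x)=1$ for $x\ge 0$ is wrong, and this is the source of the mismatch you noticed. The operator $\mathsf{K}[0]$ vanishes on $L^2(-\infty,0)$, which only tells you $\Psi(0,x)=1$ for $x\le 0$. For $x>0$ the integral equation \eqref{s304_2} still has a nontrivial right-hand side: $\Psi(0,x)=1+\int_{-x}^0 K(x+y)\,dy = 1+\int_0^x K(u)\,du$ (and $\Phi(0,x)=1-\int_0^x K(u)\,du$). Plugging this into \eqref{eq_190625_1} and integrating by parts, using $(\mathsf{F}K)(z)=E^\sharp(z)/E(z)$ from (K2), gives $A(0,z)=\tfrac12(E(z)+E^\sharp(z))$ and $-iB(0,z)=\tfrac12(E(z)-E^\sharp(z))$ on the nose. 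No limiting or uniqueness argument is needed; the hand-waving in your last paragraph is unnecessary once the correct $\Psi(0,\cdot)$ is used.
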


Theorem \ref{thm_1} is a generalization of 
\cite[Theorem 1.1]{Su19_1} by Lemma \ref{lem_5_1} below. 

Let $H^\infty=H^\infty(\C_+)$ be the space of all bounded analytic functions in $\C_+$. 
A function $\theta \in H^\infty$  
is called an {\it inner function} in $\C_+$ 
if $\lim_{y \to 0+}|\theta(x+iy)|=1$ for almost all $x \in \R$ 
with respect to the Lebesgue measure. 
We define 
\begin{equation*}
\Theta(z)=\Theta_E(z):=\frac{E^\sharp(z)}{E(z)}
\end{equation*} 
under (K1). 
Then, $\Theta(0)=1$,  
\begin{equation} \label{105}
\Theta(z)\Theta(-z)=1 \quad \text{for} \quad z \in \C \setminus \mathcal{Z},
\quad \text{and} \quad 
|\Theta(u)|=1 \quad \text{for} \quad u \in \R \setminus \mathcal{Z}
\end{equation}
by definition, 
but of course $\Theta_E$ is not inner in general. 
Now we clarify the meaning of the condition (K5) 
in order to relate $H$ of \eqref{s333} 
to the structure Hamiltonian of the de Branges space $\mathcal{H}(E)$ 
when $E$ is an entire function. 

\begin{theorem} \label{thm_2} 
Assume that $E$ satisfies (K1)$\sim$(K3), and (K5) with $\tau=\infty$. 
Then $\Theta_E$ is an inner function in $\C_+$. 
In particular, possible singularities $\mathcal{Z}$ are real. 
\end{theorem}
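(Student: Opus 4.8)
The plan is to show that $\Theta_E = E^\sharp/E$ extends to a bounded analytic function on $\C_+$ with unimodular boundary values a.e., using the Fourier-analytic description of $\Theta_E$ coming from (K2) together with the spectral hypothesis (K5) at $\tau=\infty$. First I would record that by (K2) and (K3), $\Theta_E(z) = (\mathsf F K)(z)$ for $\Im z > c$ with $K$ supported on $[0,\infty)$ and $|K(x)|\ll e^{c|x|}$; hence $\Theta_E$ is already analytic and bounded on each half-plane $\Im z > c'$ for $c' > c$. The task is to push the domain of boundedness down to all of $\C_+$ and to control the boundary modulus. The natural bridge is Theorem~\ref{thm_1}: since (K5) holds with $\tau=\infty$, the Hamiltonian $H$ of \eqref{s333} lives on all of $I_\infty = (-\infty,\infty)$, the functions $A(t,z),B(t,z)$ of \eqref{eq_190625_1} are defined and solve \eqref{s105} for all $z \in \C\setminus\mathcal Z$, and $E(z) = A(0,z) - iB(0,z)$. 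Writing $E_t := A(t,z) - iB(t,z)$ and $E_t^\sharp = A(t,z) + iB(t,z)$ (legitimate by the symmetry relations in part (1) of Theorem~\ref{thm_1}), I would track the quotient $\Theta_t(z) := E_t^\sharp(z)/E_t(z)$ as $t$ varies.

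The key computation is that $\Theta_t$ satisfies a Riccati-type equation in $t$ obtained from \eqref{s105}: differentiating $E_t = A(t,z)-iB(t,z)$ and $E_t^\sharp = A(t,z)+iB(t,z)$ using the diagonal form of $H$ in \eqref{s333} gives
\begin{equation*}
-\frac{d}{dt}\Theta_t(z) = iz\!\left(\frac{1}{\gamma(t)}\Theta_t(z)^2 - \gamma(t)\right)\!\cdot\!\frac{1}{?}
\end{equation*}
— more precisely, a first-order ODE in $t$ whose coefficients are real on $\R$ and which, by the standard theory of canonical systems with a Hamiltonian on a half-line (here $t \in [0,\infty)$), forces $\Theta_t$ to be a contraction from $\C_+$ into the closed unit disc for each $t$ once it is one for large $t$. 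Concretely, I would use that for $t$ in the region where $\mathsf K[t]=0$, i.e. $t\le 0$, one has $\Theta_t \equiv \Theta_0 = 1$-type normalization, and run the transfer-matrix / chain argument of de Branges: the transfer matrix from $0$ to $t$ is $J$-contractive (its columns solve \eqref{s105} and $\det$ is preserved), so the associated linear fractional map sends the closed unit disc into itself. Letting $t\to\infty$ and invoking that $H$ is a genuine Hamiltonian on the whole line — in particular not eventually indivisible in a way that would make $E$ fail to be Hermite–Biehler — yields $|\Theta_E(z)|\le 1$ on $\C_+$, hence $\Theta_E \in H^\infty$.

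For the boundary modulus: on $\R\setminus\mathcal Z$ we already have $|\Theta_E(u)|=1$ from \eqref{105}, and $\mathcal Z$ is discrete, so $|\Theta_E|=1$ a.e. on $\R$ in the sense of non-tangential limits — this is exactly the definition of inner. The final assertion, that $\mathcal Z$ must be real, follows because a bounded analytic function on $\C_+$ cannot have a singularity in the open upper half-plane; combined with the symmetry $E^\sharp(z) = E(-z)$ from (K1) (which makes $\mathcal Z$ symmetric under $z\mapsto -z$ and $z\mapsto\bar z$), any non-real point of $\mathcal Z$ would produce a singularity of $\Theta_E$ either in $\C_+$ or, via the relation $\Theta_E(z)\Theta_E(-z)=1$, a zero of $\Theta_E$ accumulating against the boundedness, which is impossible.

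The main obstacle I anticipate is making rigorous the passage $t\to\infty$: one must ensure that the limit of the $J$-contractive transfer matrices still delivers a contractive (not merely bounded) linear fractional action, which requires knowing that the Hamiltonian does not ``run out'' — i.e. that $\int^\infty \operatorname{tr}H(t)\,dt = \infty$ or an equivalent limit-point condition at $\tau=\infty$ — and then identifying the limiting fractional transform's value at the relevant point with $\Theta_E(z)$ via part (4) of Theorem~\ref{thm_1}. This is where (K5) with $\tau=\infty$ does real work: it is precisely what guarantees $\gamma(t)=m(t)^2$ is finite and positive for all $t\in\R$ (Theorem~\ref{thm_5}, Proposition~\ref{prop_190620_1}), so that the chain of de Branges subspaces extends to the full line and the contraction property persists in the limit.
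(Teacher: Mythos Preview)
Your approach has a genuine circularity that you do not close. The de~Branges / $J$-contractivity chain argument needs an anchor: a value of $t$ at which $|\Theta_t(z)|\le 1$ on $\C_+$ is already known, from which contractivity then propagates. You propose $t\le 0$ as that anchor, but this is wrong: by \eqref{s331} one has $E(t,z)=E(z)e^{izt}$ for $t\le 0$, hence $\Theta_t(z)=\Theta_E(z)\,e^{-2izt}$, which is contractive on $\C_+$ if and only if $\Theta_E$ already is --- precisely what you are trying to prove. Going the other way and anchoring at $t=+\infty$ is what you flag as the ``main obstacle,'' and you do not resolve it; knowing that $\gamma(t)>0$ for all $t$ (which is what (K5) with $\tau=\infty$ gives via Theorem~\ref{thm_5}) tells you the Hamiltonian is defined on the whole line, and indeed $\operatorname{tr}H(t)=\gamma(t)+\gamma(t)^{-1}\ge 2$ forces limit-point at $+\infty$, but you still need to identify \emph{which} solution of the canonical system is the $L^2$ one at $+\infty$ and tie it to the given $E$, and nothing in your outline does this. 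A secondary issue: your route goes through Theorem~\ref{thm_1}, which assumes (K4), whereas Theorem~\ref{thm_2} does not.

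The paper's proof is entirely different and avoids the canonical system. It is purely operator-theoretic: Proposition~\ref{prop_190626_1} shows, via the min--max principle and the Hilbert--Schmidt norm of $\mathsf K[t]$, that the eigenvalues $\lambda_j^\pm(t)$ can be arranged as continuous, monotone-in-$|\cdot|$ functions of $t$. Since $\mathsf K[t]=0$ for $t\le 0$ and, by (K5) with $\tau=\infty$, no $\lambda_j^\pm(t)$ ever equals $\pm 1$, continuity forces $\Vert\mathsf K[t]\Vert_{\rm op}<1$ for all $t$. Proposition~\ref{prop_190626_2} then shows that a uniform bound $\Vert\mathsf K[t]\Vert_{\rm op}\le M$ is equivalent to $\Theta_E$ being inner, via the Fourier identity $(\mathsf F\mathsf K f)(z)=\Theta_E(z)(\mathsf F f)(-z)$ and the closed graph theorem (Lemma~\ref{lem_190628_1}). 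This uses only (K1)--(K3) and (K5) and never touches the differential equations.
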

\begin{remark} 
Compare this with \cite[Theorem 2.4]{Su19_2}, 
where some additional conditions are assumed in order to conclude that 
$\Theta_E$ is an inner function in $\C_+$,  
for $E$ arising from $L$-functions in the Selberg class.  
\end{remark}

If $E \in \mathbb{HB}$, $\Theta_E$ is an inner function in $\C_+$.  
Theorem \ref{thm_2} 
shows that (K5) with $\tau=\infty$ plays the role of the condition $E \in \mathbb{HB}$ 
for entire functions $E$; $\mathcal{Z}=\emptyset$. 
On the other hand, it is known that if $\theta$ is an inner function and meromorphic in $\C_+$, 
there exists $E \in \mathbb{HB}$ such that $\theta=E^\sharp/E$ (\cite[Sections 2.3 and 2.4]{MR2016246}). 
However, the existence of $\tau>0$ in (K5) is not obvious 
even if we assume that $\Theta_E$ is inner in $\C_+$. 
Therefore, for the converse of Theorem \ref{thm_2}, 
we require (K6). 

\begin{theorem} \label{thm_3} Assume that $E$ satisfies (K1)$\sim$(K3), and (K6). 
In addition, assume that $\Theta_E$ is an inner function in $\C_+$. 
Then (K5) holds for $\tau=\infty$.  
\end{theorem}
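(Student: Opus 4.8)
The plan is to argue by contraposition: I will show that a failure of (K5) with $\tau=\infty$ forces $\Theta_E=E^\sharp/E$ to be a quotient of two entire functions of exponential type, contradicting (K6). Since $\mathsf{K}[t]=0$ for $t\le 0$, failure of (K5) means some $\lambda\in\{1,-1\}$ is an eigenvalue of $\mathsf{K}[t_0]$ for some $t_0>0$. As the kernel $K$ is real by (K2), $\mathsf{K}[t_0]$ commutes with complex conjugation, so we may fix a real eigenfunction $f\in L^2(-\infty,t_0)$ with $\|f\|_2=1$ and $\mathsf{K}[t_0]f=\lambda f$. Extend $f$ by zero, write $\widehat{\,\cdot\,}:=\mathsf{F}$, and put $G:=K*f(-\,\cdot\,)$, so that $G(x)=\int_{-\infty}^{t_0}K(x+y)f(y)\,dy$; the eigenvalue equation says $G=\lambda f+g$ with $\operatorname{supp}g\subseteq[t_0,\infty)$. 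Because $\Theta_E$ is inner we have $\Theta_E\in H^\infty(\C_+)$, and (since $\Theta_E$ is analytic in $\C_+$ and $\mathcal{Z}$ is conjugation‑symmetric) $\mathcal{Z}\subseteq\R$; using $\mathsf{F}K=\Theta_E$ together with $\Theta_E\in L^\infty(\R)$, a Paley--Wiener/convolution argument shows $G\in L^2(\R)$, so on the real line $\widehat{G}=\Theta_E\,\widehat{f}(-\,\cdot\,)$, whence $g\in L^2$ and
\[
\Theta_E(z)\,\widehat{f}(-z)=\lambda\,\widehat{f}(z)+\widehat{g}(z)\qquad(z\in\R),
\]
an identity of $L^2(\R)$-functions.

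The first step is to show $\operatorname{supp}f\subseteq[-t_0,t_0]$. Introduce $u(z):=e^{it_0z}\widehat{f}(-z)$, $v(z):=e^{-it_0z}\widehat{f}(z)$, $w(z):=e^{-it_0z}\widehat{g}(z)$; by the support conditions $u,w\in H^2(\C_+)$ and $v\in H^2(\C_-)$. Multiplying the displayed identity by $e^{it_0z}$ gives $\Theta_E(z)u(z)=\lambda e^{2it_0z}v(z)+e^{2it_0z}w(z)$. Here $\Theta_E u\in H^2(\C_+)$ (as $\Theta_E\in H^\infty(\C_+)$), and $e^{2it_0z}w=e^{it_0z}\widehat{g}\in H^2(\C_+)$ (the Fourier transform of a function supported in $[2t_0,\infty)$); hence $\lambda e^{2it_0z}v\in H^2(\C_+)$. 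On the other hand $v\in H^2(\C_-)$ forces the inverse Fourier transform of $e^{2it_0z}v$ to be supported in $[0,2t_0]$, so $e^{2it_0z}v$ is entire of exponential type $\le 2t_0$. Unwinding the shift, the inverse Fourier transform of $e^{2it_0z}v$ is $f(\,\cdot\,-t_0)$; hence $\operatorname{supp}f\subseteq[-t_0,t_0]$ and $\widehat{f}$ is entire of exponential type $\le t_0$.

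The second step computes $\lambda$ as a quadratic form and extracts the quotient. Since $f$ is real and compactly supported, $\widehat{f}(-u)=\overline{\widehat{f}(u)}$ for $u\in\R$, and
\[
\lambda=\langle\mathsf{K}[t_0]f,f\rangle=\int_{\R}K(u)\,(f*f)(u)\,du=\frac{1}{2\pi}\int_{\R}\Theta_E(u)\,\widehat{f}(-u)^2\,du,
\]
the last equality being Parseval's identity for $K=\mathsf{F}^{-1}\Theta_E$ and $f*f$ (legitimate because $\widehat{f}^{\,2}=\mathsf{F}(f*f)\in L^1(\R)$; the minor issue that $\Theta_E\notin L^1$ and $f*f$ is not Schwartz is handled by mollification, using $\operatorname{supp}K\subseteq[0,\infty)$ and $\operatorname{supp}(f*f)\subseteq[-2t_0,2t_0]$). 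Since $|\Theta_E|=1$ on $\R$, $|\lambda|\le\frac{1}{2\pi}\int_{\R}|\widehat{f}(u)|^2\,du=\|f\|_2^2=1$; as $|\lambda|=1$, equality holds, forcing $\Theta_E(u)\widehat{f}(-u)^2=\lambda\,|\widehat{f}(-u)|^2$ for a.e.\ $u$. Dividing by $\widehat{f}(-u)$ (nonzero off a discrete set, $\widehat{f}$ being a nonzero entire function) and using $\overline{\widehat{f}(-u)}=\widehat{f}(u)$ gives $\Theta_E(u)\widehat{f}(-u)=\lambda\,\widehat{f}(u)$ a.e.; since $\mathcal{Z}\subseteq\R$ and $\widehat{f}$ is entire, analytic continuation yields $\Theta_E(z)=\lambda\,\widehat{f}(z)/\widehat{f}(-z)$ on $\C$. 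Thus $E^\sharp/E$ is the quotient of the two entire functions $\lambda\widehat{f}(z)$ and $\widehat{f}(-z)$, both of exponential type $\le t_0$, contradicting (K6); hence no such $t_0$ exists and (K5) holds with $\tau=\infty$.

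The step I expect to be most delicate is the first one — the harmonic-analysis bookkeeping that promotes the eigenfunction to a compactly supported one — because one must get the Hardy-class memberships exactly right and, in particular, first upgrade $G$ (and $g$) from an a priori exponentially growing function to an $L^2$ function before the Fourier identities can be manipulated on the real line; this is precisely where the hypothesis that $\Theta_E$ is inner (so $\Theta_E\in H^\infty(\C_+)$) enters. The Parseval computation in the second step, though elementary, also needs care since $\Theta_E$ is not integrable, and one should note that it is the reality of the eigenfunction that makes the equality case of Cauchy--Schwarz produce the clean quotient $\lambda\widehat{f}/\widehat{f}(-\,\cdot\,)$ rather than its square.
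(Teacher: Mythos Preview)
Your argument is correct. The contrapositive strategy---showing that an eigenfunction of $\mathsf{K}[t_0]$ with eigenvalue $\pm 1$ must be supported in $[-t_0,t_0]$ and then using equality in the triangle inequality for $\lambda=\frac{1}{2\pi}\int\Theta_E(u)\widehat f(-u)^2\,du$ to force $\Theta_E=\lambda\widehat f/\widehat f(-\cdot)$, hence a ratio of entire functions of exponential type---is exactly the mechanism that makes (K6) relevant, and each step checks out. One small simplification: once you know $\Theta_E$ is inner, Lemma~\ref{lem_3_2} already gives $\mathsf{K}$ as a bounded isometry on $L^2(\R)$, so you can write $\lambda=\langle\mathsf{K}f,f\rangle=\frac{1}{2\pi}\langle\Theta_E\widehat f(-\cdot),\widehat f\rangle$ directly, bypassing the convolution $f*f$ and the mollification issue entirely.

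For comparison, the paper does not reprove this statement here but simply cites \cite[Theorem~5.2]{Su19_1}; the present paper remarks just before the theorem that it ``is essentially the same'' as that result. Your self-contained argument is presumably close in spirit to the cited one, since the contrapositive via (K6) is the natural route and indeed the only place in the paper where (K6) is invoked. The additional sentences in the paper's proof about $\|\mathsf{K}[t]\|_{\rm op}<1$ via Proposition~\ref{prop_190626_1} record the stronger conclusion that the operator norm stays strictly below $1$, but this is not needed for (K5) itself.
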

As we will see in the proof, 
Theorem \ref{thm_3} is essentially the same as \cite[Theorem 5.2]{Su19_1}. 
Theorems \ref{thm_1}, \ref{thm_2}, and \ref{thm_3} 
emphasize the importance of the function $m(t)$. 
The following simple formula, which was \cite[Theorem 1\, (1.9)]{Su18} 
for special $E$, 
is interesting from both theoretical and computational aspects.

\begin{theorem} \label{thm_5} Assume that $E$ satisfies (K2)$\sim$(K5). Then,  
\begin{equation} \label{formula_01}
m(t) = \frac{1}{\Phi(t,t)} = \Psi(t,t)
\end{equation}
holds for every $t \in \R$. 
\end{theorem}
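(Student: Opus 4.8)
The plan is to reduce the identities $m(t) = 1/\Phi(t,t) = \Psi(t,t)$ to Fredholm-determinant formulas for the values at the endpoint of the resolvent kernels of $\mathsf{K}[t]$. First I would note that, by the definition \eqref{s333}, $m(t) = \det(1+\mathsf{K}[t])/\det(1-\mathsf{K}[t])$, so the content of the theorem is the pair of formulas
\begin{equation*}
\Phi(t,t) = \frac{\det(1-\mathsf{K}[t])}{\det(1+\mathsf{K}[t])},
\qquad
\Psi(t,t) = \frac{\det(1+\mathsf{K}[t])}{\det(1-\mathsf{K}[t])}.
\end{equation*}
By conditions (K2) and (K3) the operator $\mathsf{K}[t]$ is Hilbert--Schmidt and self-adjoint on $L^2(-\infty,t)$ with kernel $K(x+y)\mathsf{1}_{(-\infty,t)}(x)\mathsf{1}_{(-\infty,t)}(y)$, and (K5) guarantees that $1\pm\mathsf{K}[t]$ are invertible, so $(1\pm\mathsf{K}[t])^{-1} = 1 \pm \mathsf{R}_{\pm}[t]$ for Hilbert--Schmidt (indeed trace-class, after one more application of $\mathsf{K}[t]$) resolvent operators; write $R_{\pm}(t;x,y)$ for their kernels. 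Equations \eqref{s304_1} and \eqref{s304_2} say exactly that $\Phi(t,\cdot)$ and $\Psi(t,\cdot)$ are the images of the constant function $1$ under $(1+\mathsf{K}[t])^{-1}$ and $(1-\mathsf{K}[t])^{-1}$ respectively; hence
\begin{equation*}
\Phi(t,x) = 1 - \int_{-\infty}^{t} R_{+}(t;x,y)\,dy,
\qquad
\Psi(t,x) = 1 + \int_{-\infty}^{t} R_{-}(t;x,y)\,dy .
\end{equation*}

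The key step is to differentiate the Fredholm determinants in $t$. Using the standard formula $\frac{d}{dt}\log\det(1\mp\mathsf{K}[t]) = \mp\,\operatorname{tr}\!\big((1\mp\mathsf{K}[t])^{-1}\,\tfrac{d}{dt}\mathsf{K}[t]\big)$, together with the fact that $\frac{d}{dt}\mathsf{K}[t]$ has kernel concentrated on the boundary — differentiating the cutoff at $x=t$ and $y=t$ produces, in the distributional sense, the rank-structure $K(x+t)\delta(y-t) + \delta(x-t)K(t+y)$ — one obtains
\begin{equation*}
\frac{d}{dt}\log\frac{\det(1+\mathsf{K}[t])}{\det(1-\mathsf{K}[t])}
= -\,\partial_t \log\big(\Phi(t,t)\Psi(t,t)\big)
\quad\text{and similar boundary-trace expressions},
\end{equation*}
more precisely each logarithmic derivative equals a bilinear expression in $\Phi(t,t)$, $\Psi(t,t)$ and the boundary values of $K$. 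I would make this rigorous by approximating the half-line operators by their restrictions to $(-N,t)$, where the kernels are genuinely continuous and the classical Gohberg--Krein/Dieudonné differentiation of determinants applies, and then passing to the limit $N\to\infty$ using the $L^1_{\mathrm{loc}}$-with-exponential-bound control from (K2) and (K4). A parallel, and in fact cleaner, route is to differentiate the integral equations \eqref{s304_1}, \eqref{s304_2} directly in $t$: $\partial_t\Phi$ and the function $x\mapsto K(x+t)\Phi(t,t)$ satisfy the same inhomogeneous equation up to sign, which yields $\partial_t\Phi(t,\cdot) = -\Phi(t,t)\,(1+\mathsf{K}[t])^{-1}[K(\cdot+t)]$ and an analogous identity for $\Psi$; evaluating these at $x=t$ and combining gives a closed first-order ODE system for $\log\Phi(t,t)$ and $\log\Psi(t,t)$ whose right-hand side matches that of $\log\det(1\mp\mathsf{K}[t])$.

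Finally I would integrate: at $t\le 0$ one has $\mathsf{K}[t]=0$ by (K2)–(K3), so $\Phi(t,t)=\Psi(t,t)=1=m(t)$, fixing the constant of integration; then the matching of logarithmic derivatives on $(0,\tau)$ forces $\Phi(t,t)=1/m(t)$ and $\Psi(t,t)=m(t)$ for all $t$, and since $K$ is continuous the functions extend continuously to $t=\tau$ if $\tau<\infty$ and the identity holds for every $t\in\R$ (with $\mathsf{K}[t]=0$, hence $m(t)=1$, for $t\ge\tau$ as well, vacuously on the relevant range). The one identity $\Phi(t,t)\Psi(t,t)=1$ that is needed to split the product can be read off either from the ODE system (the derivative of $\log(\Phi(t,t)\Psi(t,t))$ vanishes and the value at $t=0$ is $1$) or directly from the symmetry $K(x+y)=K(y+x)$, which makes $\mathsf{K}[t]$ symmetric and lets one compute $\langle\Psi(t,\cdot),(1+\mathsf{K}[t])\Phi(t,\cdot)\rangle$ two ways. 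The main obstacle I anticipate is the rigorous handling of the $t$-derivative of $\mathsf{K}[t]$ as a boundary (rank-one-type) perturbation on the half-line and the justification of differentiating the Fredholm determinant there; the finite-interval approximation plus the exponential bound in (K2) is the device I would lean on to control it, and (K4) is exactly what is needed to make $\partial_t\Phi$, $\partial_t\Psi$ genuine $L^1_{\mathrm{loc}}$ functions rather than mere distributions.
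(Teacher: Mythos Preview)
Your proposal is essentially correct and follows the same route as the paper: the paper's proof is precisely your ``cleaner route'' --- differentiate the integral equations to get $\partial_t\Phi(t,\cdot)=-\Phi(t,t)\,\phi^+(t,\cdot)$ and $\partial_x\Phi(t,\cdot)=-\Phi(t,t)\,\phi^-(t,\cdot)$ with $\phi^\pm=(1\pm\mathsf{K}[t])^{-1}K(\cdot+t)$, combine to obtain $\tfrac{d}{dt}\Phi(t,t)=-\Phi(t,t)\bigl(\phi^+(t,t)+\phi^-(t,t)\bigr)$, and then invoke the determinant identity $\phi^\pm(t,t)=\pm\tfrac{d}{dt}\log\det(1\pm\mathsf{K}[t])$ (your trace-of-boundary-perturbation formula) together with the initial value at $t=0$.

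Two small corrections: first, you need \emph{both} partial derivatives of $\Phi$ at $x=t$ to form $\tfrac{d}{dt}\Phi(t,t)$, and once you have them the ODE already matches $-\tfrac{d}{dt}\log m(t)$ directly, so no auxiliary ``split via $\Phi(t,t)\Psi(t,t)=1$'' is required; second, your closing remark that $\mathsf{K}[t]=0$ for $t\ge\tau$ is incorrect --- (K5) only says $\pm 1$ are not eigenvalues on $[0,\tau)$, and the identities are asserted (and proved) for $t<\tau$.
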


See Propositions \ref{prop_190620_2} and \ref{prop_190709_1} for other formulas of $m(t)$.
If $E$ satisfies (K1)$\sim$(K5) and $\Theta_E$ is an inner function in $\C_+$, 
$\mathsf{K}=\lim_{t \to \infty}\mathsf{K}[t]$ defines a bounded operator on $L^2(\R)$ 
(Lemma \ref{lem_3_2}), and the Fourier transform 
$\mathsf{F}(\mathcal{V}_t)$ of the space 
$\mathcal{V}_t=L^2(t,\infty) \cap \mathsf{K}L^2(t,\infty)$ 
forms a reproducing kernel Hilbert space for each $0 \leq t <\tau$ 
(Section \ref{section_6}). 

\begin{theorem} \label{thm_4} The following statements hold.
\begin{enumerate}
\item Assume that $E$ satisfies (K1)$\sim$(K5) and that $\Theta_E$ is an inner function in $\C_+$. 
Let $A(t,z)$ and $B(t,z)$ be as in Theorem \ref{thm_1}, 
and let $j(t;z,w)$ be the reproducing kernel of $\mathsf{F}(\mathcal{V}_t)$ for $0 \leq t<\tau$. 
Then, 
\begin{equation} \label{s106}
j(t;z,w) 
 = \frac{1}{\overline{E(z)}E(w)}\cdot \frac{\overline{A(t,z)}B(t,w)-A(t,w)\overline{B(t,z)}}{\pi(w-\bar{z})}, 
\end{equation}
and $j(t;z,z) \not \equiv 0$ as a function of $z \in \C_+$ for any $0 \leq t<\tau$. 
\item Assume that $E$ satisfies (K1)$\sim$(K5) with $\tau=\infty$.   
Then, ($\Theta_E$ is an inner function in $\C_+$ and) $\displaystyle{\lim_{t \to \infty}j(t;z,w) = 0}$ 
for every $z,w \in \C_+$.
\end{enumerate}
\end{theorem}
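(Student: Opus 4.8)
\emph{Plan of proof.} For part (1) the plan is to realise $\mathsf{F}(\mathcal{V}_t)$, as a reproducing kernel Hilbert space, as the renormalisation by $1/E$ of the de~Branges-type space $\mathcal{H}(E_t):=\{F\ \text{analytic in}\ \C_+ : F/E_t,\ F^\sharp/E_t\in H^2(\C_+)\}$, where $E_t:=A(t,\cdot)-iB(t,\cdot)$ and $H^2(\C_+)$ is the Hardy space of the upper half-plane, and then to read off its kernel. Since $\Theta_E$ is inner, it is analytic in $\C_+$, so its singularities --- hence $\mathcal{Z}$, which is symmetric under $z\mapsto -z$ --- lie on $\R$; moreover $\mathsf{K}=\lim_{t\to\infty}\mathsf{K}[t]$ is a bounded self-adjoint operator on $L^2(\R)$ (Lemma~\ref{lem_3_2}), and $1\pm\mathsf{K}[t]$ are invertible by (K5). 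Formulas \eqref{eq_190625_1} exhibit $A(t,\cdot)/E$ and $B(t,\cdot)/E$ as Fourier transforms of functions supported in $(t,\infty)$, hence as elements of $\mathsf{F}(L^2(t,\infty))$; combining this with the description of $\mathcal{V}_t$ and of the inner product on $\mathsf{F}(\mathcal{V}_t)$ set up in Section~\ref{section_6}, and with the symmetries $A(t,\cdot)^\sharp=A(t,\cdot)$, $B(t,\cdot)^\sharp=B(t,\cdot)$ of Theorem~\ref{thm_1}(1), one shows that $F\mapsto F/E$ is a unitary map from $\mathcal{H}(E_t)$ onto $\mathsf{F}(\mathcal{V}_t)$. (In Fourier terms $\mathsf{F}(\mathcal{V}_t)$ is the intersection of $e^{itz}H^2(\C_+)$ with $\Theta_E$ times the Fourier image of $L^2(-\infty,-t)$, and the substitution $G=F/E$ turns these two conditions into $F/E_t,\ F^\sharp/E_t\in H^2(\C_+)$.)

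Granting this identification, the reproducing kernel of $\mathsf{F}(\mathcal{V}_t)$ is obtained from the reproducing kernel $\big(B_t(z)\overline{A_t(w)}-A_t(z)\overline{B_t(w)}\big)/\big(\pi(z-\bar w)\big)$ of $\mathcal{H}(E_t)$ by the usual transformation rule for kernels under a unitary multiplier, i.e.\ by inserting the factor $1/(\overline{E(w)}\,E(z))$; after a relabelling $z\leftrightarrow w$ this is exactly \eqref{s106}. Taking $w=z$ there gives $j(t;z,z)=y(t;z)/\big(\pi|E(z)|^2\Im z\big)$ with $y(t;z):=\Im\big(\overline{A(t,z)}B(t,z)\big)$, and differentiating in $t$ by means of \eqref{s105} (legitimate by Theorem~\ref{thm_1}(2),(3)) gives $\partial_t y(t;z)=-(\Im z)\big(\gamma(t)\,|B(t,z)|^2+\gamma(t)^{-1}\,|A(t,z)|^2\big)$. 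For $z\in\C_+$ this is strictly negative: $A(t,z)$ and $B(t,z)$ cannot vanish simultaneously, because by uniqueness for the ODE \eqref{s105} that would force $A(0,z)=B(0,z)=0$, i.e.\ $E(z)=0$, which fails for almost every $z\in\C_+$. Since $y(t;z)=\pi|E(z)|^2(\Im z)\,j(t;z,z)\ge0$ for every $t$, the strict decrease of $t\mapsto y(t;z)$ forces $y(t;z)>0$ for all $0\le t<\tau$ whenever $E(z)\neq0$; hence $j(t;z,z)\not\equiv0$.

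For part (2), (K1)--(K5) with $\tau=\infty$ forces $\Theta_E$ to be inner (Theorem~\ref{thm_2}), so part (1) applies for every $t\in I_\infty=(-\infty,\infty)$ and the identities of the preceding paragraph hold. By the AM--GM inequality, $\partial_t y(t;z)\le -2(\Im z)\,|A(t,z)\,\overline{B(t,z)}|\le -2(\Im z)\,y(t;z)$; since $\Im z>0$, Gr\"onwall's inequality yields $0\le y(t;z)\le y(0;z)\,e^{-2(\Im z)t}\to 0$ as $t\to\infty$. Hence $j(t;z,z)\to0$, and the Cauchy--Schwarz inequality $|j(t;z,w)|^2\le j(t;z,z)\,j(t;w,w)$ gives $\lim_{t\to\infty}j(t;z,w)=0$ for all $z,w\in\C_+$. (Alternatively: $\mathcal{V}_t$ decreases in $t$ and $\bigcap_{t\ge0}\mathcal{V}_t\subseteq\bigcap_{t\ge0}L^2(t,\infty)=\{0\}$, so the corresponding orthogonal projections converge strongly to $0$, which gives the same conclusion.)

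The main obstacle is the identification in the first paragraph: bridging the operator-theoretic picture of $\mathcal{V}_t$ --- through $\mathsf{K}$, the solutions $\Phi,\Psi$ of \eqref{s304_1}--\eqref{s304_2}, and (K5) --- with the de~Branges/Fourier-analytic picture through $E_t$, and in particular matching the Hilbert space norms rather than merely the underlying spaces of functions. Concretely, one must verify that the right-hand side of \eqref{s106}, which is \emph{a priori} only seen to lie in $\mathsf{F}(L^2(t,\infty))$ (being a combination of $A(t,\cdot)/E$ and $B(t,\cdot)/E$ divided by $w-\bar z$), actually lies in $\mathsf{F}(\mathsf{K}L^2(t,\infty))$ as well and reproduces every element of $\mathsf{F}(\mathcal{V}_t)$; this is where the explicit formulas of Theorems~\ref{thm_1} and~\ref{thm_5} --- in particular the identity $m(t)=1/\Phi(t,t)=\Psi(t,t)$ of \eqref{formula_01} --- and the apparatus of Section~\ref{section_6} come into play. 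Once \eqref{s106} is established, the non-vanishing in (1) and all of (2) are soft.
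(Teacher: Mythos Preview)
Your plan has a genuine gap at the decisive step, namely the identification $\mathsf{F}(\mathcal{V}_t)=\frac{1}{E}\mathcal{H}(E_t)$. In the paper this equality (Corollary~\ref{cor_190706_1}) is a \emph{consequence} of Theorem~\ref{thm_4}, obtained by comparing kernels, not a route towards it. To run your argument in the forward direction you would need, \emph{before} knowing \eqref{s106}, that (i) $E_t\in\overline{\mathbb{HB}}$ (so that $\mathcal{H}(E_t)$ has the standard de~Branges kernel), and (ii) the multiplier $e^{-itz}E_t(z)/E(z)$ is invertible in $H^\infty$ (so that your two Fourier conditions $e^{-itz}F/E\in H^2$, $e^{-itz}F^\sharp/E\in H^2$ become $F/E_t,\,F^\sharp/E_t\in H^2$ \emph{isometrically}). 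Neither is available at this stage: the paper proves that $\Theta(t,\cdot)$ is inner only in Section~\ref{section_7}, using \eqref{s342_b}, which in turn rests on \eqref{s106}; and from \eqref{190703_1} one sees $e^{-itz}E_t(z)/E(z)=\tfrac{1}{2}(m(t)+m(t)^{-1})+\big(\text{Fourier transform of }\phi^\pm\text{-data on }(t,\infty)\big)$, whose $H^\infty$-invertibility is not evident without further work. You acknowledge this as ``the main obstacle'' but do not supply the missing argument, so the proof of \eqref{s106} is incomplete.

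The paper's route is quite different: it writes down the reproducing vector $Y_z^t\in\mathcal{V}_t$ directly as the orthogonal projection of $\mathbf{1}_{[t,\infty)}e_z$, using that (K5) makes $\mathcal{V}_t^\perp=L^2(-\infty,t)+\mathsf{K}L^2(-\infty,t)$ a closed sum, expresses this projection as $(1-\mathsf{P}_t)\tfrac{1}{2}(a_z^t+b_z^t)$ via the integral equations of Lemma~\ref{lem_6_3}, and then computes $\langle Y_w^t,Y_z^t\rangle$ explicitly with the aid of \eqref{190703_1} and \eqref{190705_4}. This is exactly the ``apparatus of Section~\ref{section_6}'' you invoke, but it leads to \eqref{s106} by calculation, not by an abstract space identification.

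By contrast, once \eqref{s106} is in hand your remaining arguments are correct and pleasant alternatives to the paper's. For non-vanishing the paper simply appeals to $\mathcal{V}_t\neq\{0\}$ (Lemma~\ref{lem_6_2}); your ODE computation of $\partial_t y(t;z)$ works too, and in fact coincides with \eqref{s342}--\eqref{s342_b}. For part~(2) the paper bounds $\Vert Y_z^t\Vert$ by an $L^2$-tail of $Y_z^0$; your Gr\"onwall estimate $y(t;z)\le y(0;z)e^{-2(\Im z)t}$ is sharper and self-contained, and your ``alternative'' via $\bigcap_t\mathcal{V}_t=\{0\}$ is also valid since $Y_z^t=P_{\mathcal{V}_t}Y_z^0$. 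These parts would stand, but they all presuppose \eqref{s106}, which your proposal does not establish.
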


Theorem \ref{thm_4} is a generalization of \cite[Theorem 1.2]{Su19_1} 
and shows that $H$ of \eqref{s333} provides the structure Hamiltonian 
of the de Branges space $\mathcal{H}(E)$ 
if $E \in \mathbb{HB}$ satisfies (K1)$\sim$(K3) and (K6) 
by \cite[Theorem 40]{MR0229011}. 
In this sense,  we could call $H$ of \eqref{s333} 
the ``structural Hamiltonian'' of the family of spaces 
$\{\mathsf{F}(\mathcal{V}_t)\}_{0 \leq t <\infty}$ 
even though $\Theta_E$ is inner but $E \not\in \mathbb{HB}$. 
If $\Theta_E$ is inner, the spaces $\mathsf{F}(\mathcal{V}_t)$ 
are subspaces of the model space $\mathcal{K}(\Theta_E)$ (see Section \ref{section_6}). 
In such a case, Theorems \ref{thm_1} and \ref{thm_4} 
solve the ``inverse problem'' to find the ``structure Hamiltonian'' 
of  $\{\mathsf{F}(\mathcal{V}_t)\}_{0 \leq t <\infty} \subset \mathcal{K}(\Theta_E)$ 
from the generator $\Theta_E$. 
We are not sure how useful the above extensions of known classical situation for the Hermite-Biehler functions are in general, but at least they have motivations related to the Riemann zeta function, as described below.

The basic idea for achieving the above results originates from the work of J.-F. Burnol 
\cite{Burnol2011} (and \cite{Burnol2002, Burnol2004, Burnol2007}) 
as well as \cite{Su12, Su19_1, Su19_2, Su18}. 
However, in this paper, the method used in \cite{Burnol2011} for $\Gamma(1-s)/\Gamma(s)$ 
standing on the theory of Hankel transforms 
is axiomatized, reorganized and generalized, and some arguments are simplified.
\bigskip

Before closing the introduction, 
we mention a few examples of functions $E$ satisfying conditions (K1)$\sim$(K6).
Let $\zeta(s)$ be the Riemann zeta function, 
and let $\xi(s)=s(s-1)\pi^{-s/2}\Gamma(s/2)\zeta(s)$. 
Then $\xi(s)$ is an entire function taking real-values on the critical line $\Re(s)=1/2$ 
and the real line such that 
the zeros coincide with nontrivial zeros of $\zeta(s)$. 
We put
\begin{equation} \label{190709_1}
E_{n}(z) = \xi\left(\frac{1}{2}+\frac{2}{n}-iz\right)^n, 
\quad 
E_{\Join}(z) = \exp\left( 2 \frac{\xi'}{\xi}\left(\frac{1}{2}-iz\right) \right)  
\end{equation}
for $n \in \Z_{>0}$. 
Then, $E_{\Join}(z)=\lim_{n\to\infty}E_n(z)/\xi(1/2-iz)^n$, 
and it is proved that $E_n$ (resp. $E_{\Join}$) satisfies 
the conditions (K1)$\sim$(K6) 
in \cite[Propositions 4.1 and 4.2 and Lemma 5.1]{Su19_2} (resp. \cite[Theorem 2]{Su18}), 
where $\tau>0$ in (K5) is small if unconditional, 
and $\tau=\infty$ if the Riemann hypothesis is assumed; 
all zeros of $\xi(s)$ lie on the critical line. 
In \cite{Su19_2}, 
many examples of $E$ satisfying the conditions (K1)$\sim$(K6) are made 
from $L$-functions in the Selberg class. 

The function $\xi(s)$ has no zeros in $\Re(s)>\tfrac{1}{2}+\tfrac{2}{N}$ 
if and only if $E_n$ belongs to $\mathbb{HB}$ for each $n \geq N$ 
(\cite[Theorem 6.1]{Su19_2}). 
In particular, the de Branges space $\mathcal{H}(E_n)$ is defined for each $n \in \Z_{>0}$ 
under the Riemann hypothesis, 
and its structure Hamiltonian $H_n$ is constructed in \cite{Su19_1, Su19_2}. 
Therefore, it is natural to ask about the limit behavior of $H_n$ as $n\to\infty$. 
However, $\lim_{n\to\infty}E_n$ does not make sense, 
and $E_{\Join}$ is no longer an entire function, 
because $E_{\Join}$ has an essential singularity at a zero of $\xi(1/2-iz)$. 
Therefore, the method constructing $H_n$ in \cite{Su19_1, Su19_2} can not be applied to $E_{\Join}$. 
This is the main reason why we generalized condition (K1) as above in this paper.
\bigskip

As mentioned above, 
Theorems \ref{thm_1} and \ref{thm_4} are generalizations of some results in \cite{Su19_1}, 
and Theorem \ref{thm_5} is a generalization of a result in \cite{Su18}, 
but in both cases, the proof argument is not essentially new. 
In other words, their value lies in the point that the almost same result holds 
even if the condition (K1) in the paper \cite{Su19_1} is generalized as in this paper. 
On the other hand, Theorem \ref{thm_2} 
is new in both assertion and method of proof. 
It further clarifies the relationship between whether $\Theta_E$ is inner 
and the condition (K5) described in \cite{Su19_1}.
\bigskip

For $E \in \mathbb{HB}$,  the condition $E^\sharp(z)=E(-z)$ implies that $\mathcal{H}(E)$ 
is a symmetric de Branges space. 
To handle the non-symmetric cases, we need to consider equations 
$\Phi(t,x)+\int_{-\infty}^{t}K(x+y)\overline{\Phi(t,y)}\,dy=1$ 
and 
$\Psi(t,x)-\int_{-\infty}^{t}K(x+y)\overline{\Psi(t,y)}\,dy=1$
 instead of \eqref{s304_1} and \eqref{s304_2}. 
The results for these cases will be discussed in the upcoming paper in preparation.
\bigskip

The paper is organized as follows. 
In Section \ref{section_2}, we study basic properties of solutions $\Phi(t,x)$ and $\Psi(t,x)$ 
of integral equations \eqref{s304_1} and \eqref{s304_2} in preparation for the proof of Theorem \ref{thm_1},  
and prove Theorem \ref{thm_5}. 
In Section \ref{section_3}, we prove Theorem \ref{thm_1} by using results in Section \ref{section_2}. 
In Section \ref{section_4}, we prove Theorems \ref{thm_2} and \ref{thm_3} by studying the behavior of the operator norm of $\mathsf{K}[t]$ when $t$ varies. 
In Section \ref{section_5}, we review the theory of model subspaces and de Branges spaces in preparation for the proof of Theorem \ref{thm_4}. 
In Section \ref{section_6}, we prove Theorem \ref{thm_4} by studying the vectors representing the point evaluation maps in a model subspace. 
In Section \ref{section_7}, we state and prove several results related to model subspaces. 
In Section \ref{section_8}, we comment on a relation between our inverse problem, 
the Cauchy problem for certain hyperbolic systems, and damped wave equations. 
\bigskip

\noindent
{\bf Acknowledgments}~
This work was supported by JSPS KAKENHI Grant Number JP17K05163.

%
%
\section{Solutions of related integral equations} \label{section_2}
%
%

We suppose that $E$ satisfies (K2)$\sim$(K5) throughout this section. 
In particular, we understand that $c$ and $\tau$ are constants in (K2) and (K5), respectively.  
\comment{
Let $L^p(I)$ be the $L^p$-space on an interval $I$ with respect to the Lebesgue measure.  
If $J \subset I$, we regard $L^p(J)$ as a subspace of $L^p(I)$ 
by the extension by zero. 
}

%
%
\subsection{Properties of $\Phi(t,x)$ and $\Psi(t,x)$} 
%
%

\begin{proposition} \label{prop_190620_1}
The integral equations 
\eqref{s304_1} and \eqref{s304_2} 
for $(x,t) \in \R \times (-\infty,\tau)$ have unique solutions $\Phi(t,x)$ and $\Psi(t,x)$, respectively.  
Moreover, 
\begin{enumerate}
\item $\Phi(t,x)$ and $\Psi(t,x)$ are real-valued, 
\item $\Phi(t,x)$ and $\Psi(t,x)$ 
are continuously differentiable functions of $x \in \R$, 
\item $\Phi(t,x)=\Psi(t,x)=1$ for $x<-t$ and $\Phi(t,x) \ll \exp(c'x)$ and $\Psi(t,x) \ll \exp(c'x)$ 
as $x \to +\infty$ for any $t<\tau$ and $c'>c$, where implied constants depend on $t$,  
\item $\Phi(t,t)\not=0$ and $\Psi(t,t)\not=0$ for every $t<\tau$,  
\item if $t \leq 0$, $\Phi(t,t)=\Psi(t,t)=1$ and 
\begin{equation} \label{190706_2}
\Phi(t,x) = 1 - \int_{0}^{x+t} K(y) \, dy, \qquad 
\Psi(t,x) = 1 + \int_{0}^{x+t} K(y) \, dy.
\end{equation}
\end{enumerate}
\end{proposition}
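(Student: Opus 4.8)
The plan is to reduce each equation on the whole line to a well-posed Fredholm equation on $L^2(-\infty,t)$, establish regularity by a bootstrap, and then extract the endpoint non-vanishing in part~(4) from a differentiation argument. First I would substitute $\Phi(t,x)=1+\phi(t,x)$ and $\Psi(t,x)=1+\psi(t,x)$; since $K$ vanishes on $(-\infty,0)$ by (K3), this turns \eqref{s304_1} and \eqref{s304_2} into $(1+\mathsf{K}[t])\phi(t,\cdot)=-r$ and $(1-\mathsf{K}[t])\psi(t,\cdot)=r$ on $L^2(-\infty,t)$, where $r(x)=\int_0^{x+t}K(u)\,du$ vanishes for $x<-t$ and is continuous, hence lies in $L^2(-\infty,t)$. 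Since $\mathsf{K}[t]$ is Hilbert--Schmidt and (K5) says $\pm1$ are not its eigenvalues, $1\pm\mathsf{K}[t]$ are invertible by the Fredholm alternative, so $\phi,\psi$ exist and are unique; extending by the defining relations $\Phi(t,x)=1-\int_{-\infty}^{t}K(x+y)\Phi(t,y)\,dy$ and $\Psi(t,x)=1+\int_{-\infty}^{t}K(x+y)\Psi(t,y)\,dy$ produces the unique (locally bounded) solutions on $\R\times(-\infty,\tau)$. Part~(1) is then immediate: $K$ is real, so $\mathsf{K}[t]$ commutes with complex conjugation and $r$ is real, whence $\overline{\phi},\overline{\psi}$ also solve the equations and equal $\phi,\psi$ by uniqueness; reality propagates to all $x$ through the defining relations.

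For part~(2) I would run a two-step bootstrap. With $\phi\in L^2(-\infty,t)$, the function $x\mapsto\int_{-\infty}^{t}K(x+y)\phi(t,y)\,dy$ is continuous in $x$, because by (K3) the effective range of $y$ is bounded and $K(x+\cdot)$ is bounded and uniformly continuous there; hence $\Phi(t,\cdot),\Psi(t,\cdot)$ are continuous on $\R$, in particular locally bounded. Then, using (K4) — which makes $K$ locally absolutely continuous with $K'\in L^1_{\rm loc}$ — one differentiates under the integral: writing $F(x)=\int_{-\infty}^{t}K(x+y)\Phi(t,y)\,dy$, the identity $K(x+y)-K(x_0+y)=\int_{x_0}^{x}K'(s+y)\,ds$ and Fubini give $F(x)-F(x_0)=\int_{x_0}^{x}H(s)\,ds$ with $H(s)=\int_{-\infty}^{t}K'(s+y)\Phi(t,y)\,dy$, and $H$ is continuous by continuity of translation in $L^1$ together with local boundedness of $\Phi(t,\cdot)$; thus $F\in C^1$ and $\Phi(t,\cdot)=1-F\in C^1$, and similarly for $\Psi$. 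Part~(3) splits into two observations: if $x<-t$, then $x+y<0$ for every $y<t$, so $K(x+y)=0$ by (K3) and the defining relations give $\Phi(t,x)=\Psi(t,x)=1$; and by (K2), $|F(x)|\le\big(\sup_{[-x,t]}|\Phi(t,\cdot)|\big)\int_0^{x+t}|K(u)|\,du\ll_t e^{cx}$ as $x\to+\infty$, where the supremum is finite since $\Phi(t,\cdot)$ equals $1$ on $(-x,-t)$ and is continuous on $[-t,t]$, giving $\Phi(t,x)\ll_t e^{cx}\ll e^{c'x}$, and likewise for $\Psi$. Part~(5) is a direct computation: for $t\le0$ and $x<t$ one has $x+y<2t\le0$ for all $y<t$, so $K(x+y)=0$ and $\Phi(t,\cdot)\equiv\Psi(t,\cdot)\equiv1$ on $(-\infty,t)$; substituting this into the defining relations yields \eqref{190706_2}, and $\Phi(t,t)=1-\int_0^{2t}K(u)\,du=1$, $\Psi(t,t)=1$ because $2t\le0$.

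The hard part will be part~(4). The idea is that if $\Phi(t_0,t_0)=0$ for some $t_0<\tau$, then $w:=\partial_x\Phi(t_0,\cdot)$ — which lies in $L^2(-\infty,t_0)$ by parts~(2) and~(3), being continuous and supported in $[-t_0,t_0)$ — is annihilated by the \emph{opposite-sign} operator $1-\mathsf{K}[t_0]$. Concretely, differentiating \eqref{s304_1} in $x$ and integrating by parts in $y$ (legitimate since $K$ and $\Phi(t_0,\cdot)$ are locally absolutely continuous and $K(x+y)=0$ for $y$ sufficiently negative by (K3)) gives
\begin{equation*}
w(x)=-K(x+t_0)\,\Phi(t_0,t_0)+\int_{-\infty}^{t_0}K(x+y)\,w(y)\,dy=\int_{-\infty}^{t_0}K(x+y)\,w(y)\,dy\qquad(x<t_0),
\end{equation*}
that is $(1-\mathsf{K}[t_0])w=0$. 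Since $1$ is not an eigenvalue of $\mathsf{K}[t_0]$ by (K5), $w\equiv0$ on $(-\infty,t_0)$, so $\Phi(t_0,\cdot)$ is constant there, and that constant is $1$ by part~(3); this contradicts $\Phi(t_0,t_0)=0$. The same computation applied to \eqref{s304_2} yields $(1+\mathsf{K}[t_0])\partial_x\Psi(t_0,\cdot)=0$ whenever $\Psi(t_0,t_0)=0$, and since $-1$ is not an eigenvalue of $\mathsf{K}[t_0]$ by (K5) we again reach a contradiction; hence $\Phi(t,t),\Psi(t,t)\neq0$ for all $t<\tau$. I expect the only genuinely delicate points to be the differentiation-under-the-integral and integration-by-parts steps under the weak hypothesis $K'\in L^1_{\rm loc}$; everything else is routine Fredholm theory and elementary estimates.
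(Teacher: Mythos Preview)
Your proof is correct and follows essentially the same route as the paper's: reduce to a Fredholm equation on $L^2(-\infty,t)$ using (K5), bootstrap continuity and then $C^1$-regularity from (K4), and for part~(4) differentiate \eqref{s304_1} in $x$ and integrate by parts to show that $\partial_x\Phi(t_0,\cdot)$ lies in the kernel of $1-\mathsf{K}[t_0]$ when $\Phi(t_0,t_0)=0$. Your conclusion of part~(4) is actually a shade cleaner than the paper's: once $\partial_x\Phi(t_0,\cdot)\equiv 0$ on $(-\infty,t_0)$ you invoke continuity at $x=t_0$ to get $\Phi(t_0,t_0)=1$ directly, whereas the paper takes an additional detour through the implication $K\equiv 0$ before reaching the same contradiction.
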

\begin{proof} We prove only in the case of $\Phi(t,x)$, 
because the case of $\Psi(t,x)$ is proved in a similar argument.
First, we prove the uniqueness of $\Phi(t,x)$. 
If $\Phi_1(t,x)$ and $\Phi_2(t,x)$ solve \eqref{s304_1}, 
the difference $f(t,x)= \Phi_1(t,x)-\Phi_2(t,x)$ satisfies 
$f(t,x) + \int_{-\infty}^{t} K(x+y) f(t,y) \, dy = 0$. 
This shows that $f(t,x)=0$ 
if $x \leq t$, since $(1+\mathsf{K}[t])$ is invertible, and hence $f(t,x)=0$. 
Then (1) is obvious, since the kernel $K$ is real-valued by (K2). 
To prove other statements, we first suppose that $t>0$. 

We prove (2) and (3). By considering equation \eqref{s304_1} on $L^2(-t,t)$, 
we find that $\mathbf{1}_{[-t,t]}(x)\Phi(t,x)$ is a continuous function of $x$ on $[-t,t]$ 
by the continuity of $K$ and $\mathbf{1}_{[-t,t]}(x)$, 
where $\mathbf{1}_{A}$ stands for the characteristic function of $A \subset \R$. 
On the other hand, $\Phi(t,x)=1$ for $x <-t$, since the integral in \eqref{s304_1} is zero for $x<-t$ by (K3). 
Therefore, 
\begin{equation} \label{eq_190620_01}
\Phi(t,x) = 1 - \int_{-x}^{-t} K(x+y) \, dy - \int_{-t}^{t} K(x+y) \Phi(t,y) \, dy 
\end{equation}
by \eqref{s304_1}, where the middle integral is understood as zero if $x<t$. 
This equality and (K2) shows that $\Phi(t,x)$ is a continuous function of $x \in \R$ 
and satisfies the estimate in (3). 
Moreover, differentiating \eqref{s304_1} with respect to $x$, 
\begin{equation} \label{eq_180330_04}
\frac{\partial}{\partial x}\Phi(t,x) + \int_{-\infty}^{t} K'(x+y) \Phi(t,y) \, dy =0. 
\end{equation}
This shows that $\Phi(t,x)$ is differentiable for $x$ and 
$(\partial/\partial x)\Phi(t,x)$ is a continuous function of $x$ by (K4). 

We prove (4) by contradiction. Differentiating \eqref{s304_1} with respect to $x$, 
and then applying integration by parts, 
\begin{equation} 
\frac{\partial}{\partial x}\Phi(t,x) + K(x+t)\Phi(t,t) - \int_{-\infty}^{t}K(x+y) \frac{\partial}{\partial y}\Phi(t,y) \, dy =0.  
\end{equation}
Therefore, if we suppose that $\Phi(t,t)=0$, 
\begin{equation} \label{eq_180321_01} 
\frac{\partial}{\partial x}\Phi(t,x) - \int_{-\infty}^{t}K(x+y) \frac{\partial}{\partial y}\Phi(t,y) \, dy =0.  
\end{equation}
This asserts that the restriction $\mathbf{1}_{[-t,t]}(x)(\partial/\partial x)\Phi(t,x)$ 
is a solution of the homogeneous equation $(1-\mathsf{K}[t])f=0$ on $L^2(-t,t)$, 
and thus $(\partial/\partial x)\Phi(t,x)=0$ by \eqref{eq_180321_01} 
and the equality $\Phi(t,x)=1$ for $x<-t$, 
since $K(x+y)=0$ if $x<-t$ and $y<t$. 
Hence, we have 
$c \left(1 + \int_{0}^{x+t}K(y)\,dy \right) = 1$ 
for arbitrary $x$ if $\Phi(t,x)=c$. 
This implies that $K \equiv 0$ on $\R$, and therefore, 
$\Phi(t,x) = 1$ for all $x \in \R$ by \eqref{s304_1}. 
This is a contradiction. 

Finally, we prove (5). If $t \leq 0$, $K(x+y)=0$ for $x < t$ and $y < t$. 
Thus $\Phi(t,x)=1$ for $x < t$, 
the first equality of \eqref{190706_2} 
holds, and $\Phi(t,t)=1$. 
Hence $\Phi(t,x)$ is a continuously differentiable function of $x$ on $\R$ 
satisfying the desired estimate. 
\end{proof}

For convenience of studying the solutions $\Phi(t,x)$ and $\Psi(t,x)$, 
we consider the solutions of integral equations 
\begin{equation} \label{s304_3}
\phi^+(t,x) + \int_{-\infty}^{t} K(x+y) \phi^+(t,y) \, dy = K(x+t), 
\end{equation}
\begin{equation} \label{s304_4}
\phi^-(t,x) - \int_{-\infty}^{t} K(x+y) \phi^-(t,y) \, dy = K(x+t). 
\end{equation}
The usefulness of solutions $\phi^+(t,x)$ and $\phi^-(t,x)$ comes from relationships 
with the resolvent kernels $R^+(t;x,y)$ and $R^-(t;x,y)$ of $(1+\mathsf{K}[t])$ and $(1-\mathsf{K}[t])$, respectively: 
$\mathbf{1}_{\leq t}(x)\phi^+(t,x)=R^+(t;x,t)$, $\mathbf{1}_{\leq t}(x)\phi^-(t,x)=R^-(t;x,t)$ 
(\cite[Section 2]{Su19_1}). 

\begin{proposition} \label{prop_190620_2} 
The integral equations \eqref{s304_3} and \eqref{s304_4} for $(x,t) \in \R \times (-\infty,\tau)$ have unique solutions 
$\phi^+(t,x)$ and $\phi^-(t,x)$, respectively.  
Moreover, 
\begin{enumerate}
\item $\phi^+(t,x)$ and $\phi^-(t,x)$ are continuous on $\R$ 
and continuously differentiable on $\R \setminus \{\lambda -t\,|\,\lambda \in \Lambda\}$ 
as a function of $x$, where $\Lambda$ is the set in (K4). 
\item $\phi^+(t,x)$ and $\phi^-(t,x)$ are continuous on $[0,\tau)$ 
and continuously differentiable on $(0,\tau)$ 
except for points in $\{\lambda - x \,|\,\lambda \in \Lambda\}$ 
as a function of $t$, 
\item for fixed $t \in [0,\tau)$, $\phi^\pm(t,x)=0$ for $x < -t$ and 
$\phi^\pm(t,x) \ll e^{cx}$ as $x \to +\infty$ for $c>0$ in (K2), 
where the implied constants depend on $t$,  
\item if $t \leq 0$, $\phi^+(t,t)=\phi^-(t,t)=0$ and 
\[
\phi^+(t,x) = \phi^-(t,x) = K(x+t), 
\]
\item  the following equations hold 
\begin{equation} \label{s334}
\phi^{\pm}(t,t) = \pm \frac{d}{dt}\log \det(1 \pm \mathsf{K}[t]), 
\end{equation} 
\item the following equation holds 
\begin{equation} \label{s323} 
m(t) = \exp\left( \int_{0}^{t}\mu(s) \, ds \right), 
\qquad 
\mu(t) = \phi^+(t,t) +  \phi^{-}(t,t), 
\end{equation}
\item $m(t)$ is a continuous positive real-valued function on $(-\infty,\tau)$, $m(0)=1$, 
and continuously differentiable outside a discrete subset. 
\end{enumerate}
\end{proposition}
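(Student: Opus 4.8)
The plan is to follow the structure of the proof of Proposition~\ref{prop_190620_1}, establishing the assertions in the order: existence and uniqueness, then (4), then (1) and (3), then (2), and finally (5), (6), (7). For existence and uniqueness, observe that by (K3) and continuity of $K$ the function $x\mapsto K(x+t)$ vanishes for $x<-t$, is continuous, and hence has restriction to $(-\infty,t)$ in $L^2(-\infty,t)$; restricted to $x<t$, equations \eqref{s304_3} and \eqref{s304_4} become $(1\pm\mathsf{K}[t])\bigl(\mathbf{1}_{(-\infty,t)}\phi^\pm(t,\cdot)\bigr)=\mathbf{1}_{(-\infty,t)}K(\cdot+t)$, which are uniquely solvable in $L^2(-\infty,t)$ because $\pm1$ is not an eigenvalue of $\mathsf{K}[t]$ by (K5), and for $x\ge t$ the value $\phi^\pm(t,x)$ is then read off from \eqref{s304_3}/\eqref{s304_4}; uniqueness of these extended solutions follows as in Proposition~\ref{prop_190620_1}. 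Assertion (4) is then immediate: for $t\le0$ the kernel $\mathbf{1}_{x<t}\mathbf{1}_{y<t}K(x+y)$ vanishes (since $K$ is supported in $[0,\infty)$ while $x+y<2t\le0$), so $\mathsf{K}[t]=0$, $\phi^\pm(t,x)=K(x+t)$, and $\phi^\pm(t,t)=K(2t)=0$.

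For (1) and (3) I would argue exactly as in Proposition~\ref{prop_190620_1}. Equation \eqref{s304_3} gives $\phi^+(t,x)=0$ for $x<-t$ directly; considering \eqref{s304_3} on $L^2(-t,t)$ and using that $\mathsf{K}[t]$ maps $L^2$ into continuous functions shows $\mathbf{1}_{[-t,t]}\phi^+(t,\cdot)$ is continuous; substituting this into $\phi^+(t,x)=K(x+t)-\int_{-t}^{t}K(x+y)\phi^+(t,y)\,dy$ and using (K2) gives continuity of $\phi^+(t,\cdot)$ on $\R$ together with $\phi^+(t,x)\ll e^{cx}$ as $x\to+\infty$ (implied constant depending on $t$), completing (3); and differentiating \eqref{s304_3} in $x$ --- legitimate since (K4) makes $K$ locally absolutely continuous with $K'\in L^1_{\mathrm{loc}}$ --- yields $\partial_x\phi^+(t,x)+\int_{-\infty}^{t}K'(x+y)\phi^+(t,y)\,dy=K'(x+t)$, whose right side minus the integral is continuous in $x$ off $\Lambda-t$, giving the $C^1$ statement in (1). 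The arguments for $\phi^-$ are identical.

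The main technical obstacle is assertion (2), because of the moving domain $(-\infty,t)$ and the low regularity of $K'$. For continuity in $t$ I would fix the domain by the substitution $x=t+u$, $y=t+v$: the function $g^\pm(t,u):=\phi^\pm(t,t+u)$ solves $(1\pm\mathsf{L}[t])g^\pm(t,\cdot)=k_t$ on the fixed space $L^2(-\infty,0)$, where $\mathsf{L}[t]$ has kernel $K(2t+u+v)$ and $k_t(u)=K(2t+u)$; since $\mathsf{L}[t]$ is unitarily equivalent to $\mathsf{K}[t]$, the operator $1\pm\mathsf{L}[t]$ is invertible for $t<\tau$ with locally bounded inverse, and $t\mapsto\mathsf{L}[t]$ (in Hilbert--Schmidt norm) and $t\mapsto k_t$ (in $L^2$) are continuous because $K$ is continuous and the effective support of its kernel is bounded, so $t\mapsto g^\pm(t,\cdot)$ is continuous into $L^2(-\infty,0)$; substituting back into the integral equation upgrades this to joint continuity of $(t,x)\mapsto\phi^\pm(t,x)$. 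For differentiability in $t$ I would differentiate $\phi^+(t,x)=K(x+t)-\int_{-t}^{t}K(x+y)\phi^+(t,y)\,dy$ in $t$; the Leibniz boundary term at $y=-t$ vanishes (since $\phi^+(t,-t)=0$ by (3) and continuity), leaving $(1+\mathsf{K}[t])\bigl(\mathbf{1}_{(-\infty,t)}\partial_t\phi^+(t,\cdot)\bigr)=\mathbf{1}_{(-\infty,t)}\bigl(K'(\cdot+t)-K(\cdot+t)\,\phi^+(t,t)\bigr)$. Since $(1+\mathsf{K}[t])^{-1}$ equals $1$ minus the integral operator with the bounded continuous kernel $R^+(t;x,y)$ --- so it acts also on the merely-$L^1$ datum $K'(\cdot+t)$ --- one gets $\partial_t\phi^+(t,x)=K'(x+t)+h(t,x)$ with $h$ jointly continuous, hence $t\mapsto\partial_t\phi^+(t,x)$ is continuous away from $\{\lambda-x:\lambda\in\Lambda\}$; existence of $\partial_t\phi^+$ is justified by the usual difference-quotient argument using the continuity in $t$ already established, and the cases $x\ge t$ and $\phi^-$ are analogous.

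For (5), (6), (7) I would bring in the Fredholm-determinant machinery. Rather than differentiating $\log\det(1\pm\mathsf{K}[t])$ through a trace formula --- which is awkward, since the moving endpoint makes $t\mapsto\mathsf{K}[t]$ not norm-differentiable --- I would differentiate the Fredholm series directly: from $\det(1+\mathsf{K}[t])=\sum_{n\ge0}\frac{1}{n!}\int_{(-\infty,t)^n}\det\bigl(K(x_i+x_j)\bigr)_{i,j}\,dx_1\cdots dx_n$ (trace-class-ness of $\mathsf{K}[t]$, hence convergence, coming from (K2) and (K4)), differentiating the $n$ equal upper limits gives $\frac{d}{dt}\det(1+\mathsf{K}[t])=\sum_{m\ge0}\frac{1}{m!}\int_{(-\infty,t)^m}\det\bigl(K(x_i+x_j)\bigr)_{i,j=1}^{m+1}\big|_{x_{m+1}=t}\,dx_1\cdots dx_m$, and moving the last row and column to the front (an even permutation) identifies this with $\det(1+\mathsf{K}[t])$ times the Fredholm minor series for the resolvent kernel $R^+(t;x,y)$ of $1+\mathsf{K}[t]$ at $x=y=t$; since comparing \eqref{s304_3} with the equation defining $R^+$ gives $\phi^+(t,x)=R^+(t;x,t)$ for $x\le t$ (the relation recalled from \cite[Section~2]{Su19_1}), this is $\frac{d}{dt}\log\det(1+\mathsf{K}[t])=R^+(t;t,t)=\phi^+(t,t)$, and the same computation with the alternating-sign Fredholm series of $1-\mathsf{K}[t]$ gives $\frac{d}{dt}\log\det(1-\mathsf{K}[t])=-\phi^-(t,t)$, which is \eqref{s334}. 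Then (6) follows since $\log m(t)=\log\det(1+\mathsf{K}[t])-\log\det(1-\mathsf{K}[t])$ yields $\frac{d}{dt}\log m(t)=\phi^+(t,t)+\phi^-(t,t)=\mu(t)$ while $m(0)=1$, giving \eqref{s323}; and (7) follows because the Fredholm series exhibit $\det(1\pm\mathsf{K}[t])$ as $C^1$ real-valued functions on $(-\infty,\tau)$, nonvanishing there by (K5), so $m=\det(1+\mathsf{K}[t])/\det(1-\mathsf{K}[t])$ is a real-valued $C^1$ function, never zero, with $m(0)=1$, hence positive on the connected interval $(-\infty,\tau)$. The crux, as indicated, is making the $t$-regularity in part (2) rigorous in the presence of the moving domain and with $K'$ only in $L^1_{\mathrm{loc}}$; the determinant identity (5) is then a classical computation once the Fredholm series and the resolvent relation are in hand.
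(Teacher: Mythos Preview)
Your proposal is correct and, in fact, considerably more detailed than the paper's own proof, which consists of a single sentence deferring to \cite[Section~2]{Su19_1} on the grounds that conditions (K2)--(K5) are identical there. What you have written is essentially a reconstruction of that referenced argument: the existence/uniqueness via invertibility of $1\pm\mathsf{K}[t]$, the case $t\le 0$ by vanishing of the kernel, the regularity in $x$ by bootstrapping through the integral equation as in Proposition~\ref{prop_190620_1}, the regularity in $t$ via the resolvent kernel, and the determinant identity \eqref{s334} via differentiation of the Fredholm series and identification with the first Fredholm minor --- all of this is the standard route and matches what one finds in \cite{Su19_1}.

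Two minor remarks. First, your appeal to ``trace-class-ness'' to justify convergence of the Fredholm series is not quite the right reason: conditions (K2) and (K4) do not obviously force $\mathsf{K}[t]$ to be trace class, but this is not needed --- the classical Fredholm series converges for any continuous kernel on a bounded interval by Hadamard's determinant inequality, and that is the justification used in \cite{Su19_1}. Second, your argument for (7) actually yields that $m$ is $C^1$ on all of $(-\infty,\tau)$, which is slightly stronger than the stated conclusion; this is harmless, and indeed the joint continuity of $(t,x)\mapsto\phi^\pm(t,x)$ that you establish gives continuity of $\mu(t)=\phi^+(t,t)+\phi^-(t,t)$ everywhere.
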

\begin{proof}
The proof is the same as that in \cite[Section 2]{Su19_1}, 
because conditions (K2)$\sim$(K5) are exactly the same. 
\end{proof}

\begin{proposition} \label{prop_190706_1} 
Let $\Phi(t,x)$ and $\Psi(t,x)$ be unique solutions of 
\eqref{s304_1} and \eqref{s304_2}, respectively. 
Let $\phi^+(t,x)$ and $\phi^-(t,x)$ be unique solutions of 
\eqref{s304_3} and \eqref{s304_4}, respectively.  
Then, $\Phi(t,x)$ and $\Psi(t,x)$ are continuously differentiable with respect to $t$ on $(-\infty,\tau)$ 
and equalities 
\begin{equation} \label{formula_02}
\phi^+(t,x) = - \frac{1}{\Phi(t,t)} \frac{\partial}{\partial t}\Phi(t,x) = \frac{1}{\Psi(t,t)} \frac{\partial}{\partial x}\Psi(t,x), 
\end{equation}
\begin{equation} \label{formula_02_b}
\phi^-(t,x) = - \frac{1}{\Phi(t,t)} \frac{\partial}{\partial x}\Phi(t,x) = \frac{1}{\Psi(t,t)} \frac{\partial}{\partial t}\Psi(t,x)
\end{equation}
hold. 
\end{proposition}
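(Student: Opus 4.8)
The plan is to reduce both displayed identities to the uniqueness (invertibility) of the operators $1\pm\mathsf{K}[t]$ on $L^2(-\infty,t)$, which holds by (K5), after differentiating the defining equations \eqref{s304_1} and \eqref{s304_2} with respect to $x$ and with respect to $t$. I would split the argument into an easy half (the $x$-derivatives) and a harder half (establishing differentiability in $t$).

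\textbf{The $x$-derivative identities.} By Proposition~\ref{prop_190620_1}(2) the functions $\Phi(t,\cdot)$ and $\Psi(t,\cdot)$ are $C^1$, and differentiating \eqref{s304_1}, \eqref{s304_2} in $x$ gives \eqref{eq_180330_04} together with its $\Psi$-analogue. Writing $\partial_x K(x+y)=\partial_y K(x+y)$ and integrating by parts in $y$ — which is legitimate since $K'\in L^1_{\mathrm{loc}}$ by (K4), $\Phi(t,\cdot),\Psi(t,\cdot)$ are $C^1$, and the boundary contribution at $y=-\infty$ vanishes because $K(x+y)=0$ for $x+y<0$ by (K3) — I obtain, on $(-\infty,t)$,
\[
(1-\mathsf{K}[t])\bigl[\partial_x\Phi(t,\cdot)\bigr](x)=-\Phi(t,t)\,K(x+t),\qquad (1+\mathsf{K}[t])\bigl[\partial_x\Psi(t,\cdot)\bigr](x)=\Psi(t,t)\,K(x+t).
\]
Comparing with the defining equations \eqref{s304_4} and \eqref{s304_3} of $\phi^-$ and $\phi^+$, applying $(1\mp\mathsf{K}[t])^{-1}$ (bounded by (K5)), and noting both sides vanish for $x<-t$, I get $\partial_x\Phi(t,\cdot)=-\Phi(t,t)\,\phi^-(t,\cdot)$ and $\partial_x\Psi(t,\cdot)=\Psi(t,t)\,\phi^+(t,\cdot)$ on all of $\R$; dividing by $\Phi(t,t)\neq0$ and $\Psi(t,t)\neq0$ (Proposition~\ref{prop_190620_1}(4)) yields the second equality of \eqref{formula_02} and the first equality of \eqref{formula_02_b}.

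\textbf{Differentiability in $t$ (the main point).} A priori $\Phi(t,x),\Psi(t,x)$ are only known to be continuous in $t$. To upgrade to $C^1$ I would use the resolvent kernels: for $x\le t$ one has $\Phi(t,x)=1-\int_{-\infty}^{t}R^+(t;x,y)\,dy$, where $R^+(t;\cdot,\cdot)$ is the kernel of $(1+\mathsf{K}[t])^{-1}\mathsf{K}[t]$ and $\phi^+(t,x)=R^+(t;x,t)$, and similarly $\Psi(t,x)=1+\int_{-\infty}^{t}R^-(t;x,y)\,dy$ with $\phi^-(t,x)=R^-(t;x,t)$. Standard Fredholm theory (as in \cite[Section~2]{Su19_1}) gives joint continuity of $R^\pm$, and differentiating the resolvent equations in $t$ — where the $t$-derivative of $\int_{-\infty}^{t}$ produces the boundary term $K(x+t)R^\pm(t;t,y)$ — leads, after inverting $1\pm\mathsf{K}[t]$, to the Gelfand--Levitan-type identity $\partial_t R^\pm(t;x,y)=\mp R^\pm(t;x,t)R^\pm(t;t,y)$. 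Substituting this into the integral representations above yields that $\Phi(\cdot,x),\Psi(\cdot,x)$ are $C^1$ with $\partial_t\Phi(t,x)=-\Phi(t,t)\phi^+(t,x)$ for $x\le t$ and $\partial_t\Psi(t,x)=\Psi(t,t)\phi^-(t,x)$ for $x\le t$, and then for all $x$ by the equations \eqref{s304_1}, \eqref{s304_2} themselves. (Once differentiability is known, one can equivalently just differentiate \eqref{s304_1}, \eqref{s304_2} in $t$ directly, picking up the boundary term $K(x+t)\Phi(t,t)$, to get $(1+\mathsf{K}[t])[\partial_t\Phi(t,\cdot)]=-\Phi(t,t)K(\cdot+t)$, etc.) Dividing by $\Phi(t,t)$ and $\Psi(t,t)$ gives the remaining equalities in \eqref{formula_02} and \eqref{formula_02_b}, and continuity of $\partial_t\Phi,\partial_t\Psi$ follows from continuity of $t\mapsto\Phi(t,t),\Psi(t,t)$ and of $\phi^\pm$ (Proposition~\ref{prop_190620_2}).

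\textbf{Expected obstacle.} Everything except the $t$-differentiability is routine given (K5) and Proposition~\ref{prop_190620_1}(4); the genuine work is the passage from continuity to $C^1$-dependence on $t$, i.e. justifying the differentiation under the integral sign / the interchange of the $t$-limit with the bounded inverse. The resolvent-kernel formulation makes this tractable because differentiating $\int_{-\infty}^{t}$ in $t$ only produces an explicit boundary term, but one must still invoke the regularity of $R^\pm$ from Fredholm theory and the uniform invertibility of $1\pm\mathsf{K}[t']$ for $t'$ in a neighbourhood of $t$. Since conditions (K2)$\sim$(K5) are identical to those in \cite{Su19_1}, this step is carried out there; here it suffices to reproduce the argument, and the only new bookkeeping concerns keeping track of the fact that (K1) now permits essential singularities, which plays no role in this section.
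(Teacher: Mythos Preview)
Your argument is correct and, for the $x$-derivative identities, identical to the paper's. For the $t$-derivative the paper is slightly more direct than your resolvent/Gelfand--Levitan route: rather than deriving $\partial_t R^\pm(t;x,y)=\mp R^\pm(t;x,t)R^\pm(t;t,y)$ and substituting into the integral representation of $\Phi$, it simply differentiates \eqref{s304_1} in $t$ in the weak sense (precisely the parenthetical computation you mention), identifies $-\partial_t\Phi(t,\cdot)/\Phi(t,t)$ with $\phi^+(t,\cdot)$ by uniqueness for \eqref{s304_3}, and then bootstraps to classical $C^1$-regularity by invoking the continuity of $\phi^+$ in $t$ already established in Proposition~\ref{prop_190620_2}(2)--(4). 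Your approach reaches the same formula and is arguably more transparent about where the classical $t$-differentiability comes from, at the cost of the extra Gelfand--Levitan step; the paper's weak-differentiation-then-bootstrap avoids that identity entirely. Either way, the substantive inputs are the same ones you identify: invertibility of $1\pm\mathsf{K}[t]$ from (K5), nonvanishing of $\Phi(t,t),\Psi(t,t)$ from Proposition~\ref{prop_190620_1}(4), and the regularity of the resolvent kernels from \cite[Section~2]{Su19_1}.
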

\begin{proof}
Applying integration by parts to \eqref{eq_180330_04}, 
\begin{equation} \label{eq_180321_02}
\frac{\partial}{\partial x}\Phi(t,x) + K(x+t)\Phi(t,t) - \int_{-\infty}^{t}K(x+y) \frac{\partial}{\partial y}\Phi(t,y) \, dy =0.  
\end{equation}
This shows that $-(\partial/\partial x)\Phi(t,x)/\Phi(t,t)$ solves 
\eqref{s304_4} by Proposition \ref{prop_190620_1}\,(4). 
Hence the uniqueness of the solution of \eqref{s304_3} concludes the first equality of \eqref{formula_02_b}. 

On the other hand, we find that 
$\mathbf{1}_{[-t,t]}(x)\Phi(t,x)$ is also continuous in $t$, 
because the resolvent kernel $R^+(t;x,y)$ of $(1+\mathsf{K}[t])$ is continuous 
in all variables (\cite[Section 2]{Su19_1}). 
Therefore, $\Phi(t,x)$ is continuous in $t$ by \eqref{eq_190620_01}. 
By differentiating \eqref{s304_1} with respect to $t$ 
(in the sense of weak derivative), 
we find that $-(\partial/\partial t)\Phi(t,x)/\Phi(t,t)$ 
solves \eqref{s304_3} by Proposition \ref{prop_190620_1}\,(4). 
Hence the uniqueness of the solution concludes the first equality of \eqref{formula_02}. 
Moreover, the first equality of \eqref{formula_02} 
shows that $(\partial/\partial t)\Phi(t,x)$ is continuous with respect to $t$ 
by Proposition \ref{prop_190620_2}\,(2), (3), and (4). 
Hence $\Phi(t,x)$ is differentiable with respect to $t$ in the usual sense, 
and the derivative with respect to $t$ is continuous in $t$. 
The differentiability of $\Psi(t,x)$ with respect to $t$ 
and the second equalities of \eqref{formula_02} and \eqref{formula_02_b} 
are proved by a similar argument. 
\end{proof}

%
%
\subsection{Proof of Theorem \ref{thm_5}} 
%
%
Taking $x = t$ in equation \eqref{s304_1}  and then differentiating it with respect to $t$, 
\[
\aligned 
0 
= & \, \frac{d}{dt}(\Phi(t,t)) + 2 K(2t)\Phi(t,t) \\
\quad & - \int_{-\infty}^{t}K(t+y)\frac{\partial}{\partial y}\Phi(t,y) \, dy 
+ \int_{-\infty}^{t}K(t+y)\frac{\partial}{\partial t}\Phi(t,y) \, dy. 
\endaligned 
\]
Using the first equalities of \eqref{formula_02} and \eqref{formula_02_b} on the right-hand side, 
\begin{equation} \label{eq_180321_03}
\aligned 
\frac{d}{dt}(\Phi(t,t)) + & \,2 K(2t)\Phi(t,t)  - \Phi(t,t) \int_{-\infty}^{t}K(t+x)(\phi^+(t,x) - \phi^-(t,x)) \, dx =0.
\endaligned 
\end{equation}
On the other hand, by adding both sides of \eqref{s304_3} and \eqref{s304_4}, 
we have
\[
\aligned 
\frac{1}{2}&(\phi^+(t,x) + \phi^-(t,x))   = K(x+t) - \int_{-\infty}^{t}K(x+y)\frac{1}{2}(\phi^+(t,y) - \phi^-(t,y))\,dy.  
\endaligned 
\]
Substituting this into \eqref{eq_180321_03} after taking $x=t$, we get 
\begin{equation} \label{eq_180321_04}
\frac{d}{dt}(\Phi(t,t)) + \Phi(t,t)(\phi^+(t,t) + \phi^-(t,t)))  =0.
\end{equation}
Therefore, 
$
\Phi(t,t) = C \exp\left( -\int_{0}^{t} (\phi^+(\tau,\tau) + \phi^-(\tau,\tau)), d\tau \right) = C m(t)^{-1} 
$
by \eqref{s323}. 
To determine $C$, we take $x=t=0$ in equation \eqref{s304_1}. 
Then $\Phi(0,0)=1$, since the integral on the left-hand side 
is zero because $K(x)=0$ for $x<0$, 
and thus $C=1$ by $m(0)=1$. 
Hence we obtain the first equality of \eqref{formula_01}. 
The second equality of \eqref{formula_01} is proved by the same way. 
\hfill $\Box$
\bigskip

From Theorem \ref{thm_5} and Proposition \ref{prop_190620_1}, 
we find that $H$ of \eqref{s333} is a Hamiltonian on $(-\infty,\tau)$ 
such that it consists of continuous functions 
and has no $H$-indivisible intervals. 
These properties also obtained from Proposition \ref{prop_190620_2}. 

%
%
\subsection{Corollaries of Proposition \ref{prop_190706_1} } 
%
%

Here we state a few results that easily obtained from Proposition \ref{prop_190706_1}, 
but note that these are of their own interest and are not used to prove the main results. 

\begin{proposition} 
The solutions of \eqref{s304_1} and \eqref{s304_2} are related to each other as follows 
\begin{equation} \label{eq_190620_04}
\Psi(t,x) = 1 - \frac{1}{\Phi(t,t)^2} \int_{-t}^{x}  \frac{\partial}{\partial t}\Phi(t,y) \, dy, 
\end{equation}
\begin{equation} \label{eq_190620_05}
\Phi(t,x) = 1 - \frac{1}{\Psi(t,t)^2}  \int_{-t}^{x} \frac{\partial}{\partial t}\Psi(t,x)\, dy.
\end{equation}
\end{proposition}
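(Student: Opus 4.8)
The plan is to obtain both identities directly from the derivative formulas of Proposition~\ref{prop_190706_1} by a single integration in the space variable, using the explicit boundary value of $\Phi,\Psi$ at $x=-t$ together with the normalization $\Phi(t,t)\,\Psi(t,t)=1$, which is immediate from Theorem~\ref{thm_5} ($m(t)=1/\Phi(t,t)=\Psi(t,t)$).

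First I would rewrite Proposition~\ref{prop_190706_1} in the form
\[
\frac{\partial}{\partial t}\Phi(t,y) = -\,\Phi(t,t)\,\phi^+(t,y),
\qquad
\frac{\partial}{\partial y}\Psi(t,y) = \Psi(t,t)\,\phi^+(t,y),
\]
so that, for fixed $t$, $\partial_t\Phi(t,y) = -\dfrac{\Phi(t,t)}{\Psi(t,t)}\,\partial_y\Psi(t,y)$. Both sides are continuous in $y$ by Proposition~\ref{prop_190620_2}\,(1), so integration over $y\in[-t,x]$ is legitimate, and since $\Psi(t,\cdot)$ is $C^1$ (Proposition~\ref{prop_190620_1}\,(2)) the fundamental theorem of calculus gives
\[
\int_{-t}^{x}\partial_t\Phi(t,y)\,dy
= -\frac{\Phi(t,t)}{\Psi(t,t)}\bigl(\Psi(t,x)-\Psi(t,-t)\bigr).
\]
Here $\Psi(t,-t)=1$, because $\Psi(t,y)=1$ for $y<-t$ (Proposition~\ref{prop_190620_1}\,(3)) and $\Psi(t,\cdot)$ is continuous.

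It then remains to divide by $-\Phi(t,t)^2$ and invoke $\Phi(t,t)\,\Psi(t,t)=1$, which turns the right-hand side into $\Psi(t,x)-1$ and yields \eqref{eq_190620_04} after adding $1$. The identity \eqref{eq_190620_05} is the symmetric computation: one starts instead from $\partial_t\Psi(t,y)=\Psi(t,t)\,\phi^-(t,y)$ and $\partial_y\Phi(t,y)=-\Phi(t,t)\,\phi^-(t,y)$ (again Proposition~\ref{prop_190706_1}), integrates over $y\in[-t,x]$ using $\Phi(t,-t)=1$, and applies $\Phi(t,t)\,\Psi(t,t)=1$ once more; the integrand in \eqref{eq_190620_05} is to be read as $\partial_t\Psi(t,y)$.

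I do not expect any serious obstacle here: all the regularity required --- continuity of the $t$-derivatives, $C^1$-dependence in $x$, and non-vanishing of $\Phi(t,t)$ and $\Psi(t,t)$ --- is already provided by Propositions~\ref{prop_190620_1} and~\ref{prop_190706_1}, so the proof is essentially a one-line integration combined with the normalization from Theorem~\ref{thm_5}. The only point deserving a word of care is the boundary evaluation at $y=-t$, settled by the explicit value $\Phi(t,-t)=\Psi(t,-t)=1$; one may also note that for $x<-t$ both identities collapse to $1=1$ since the $t$-derivatives vanish there, so no restriction on $x$ is needed.
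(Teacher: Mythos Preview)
Your proposal is correct and follows essentially the same route as the paper: both arguments use the two equalities in \eqref{formula_02} (resp.\ \eqref{formula_02_b}) from Proposition~\ref{prop_190706_1}, integrate in the space variable from $-t$ to $x$ using $\Psi(t,-t)=1$ (resp.\ $\Phi(t,-t)=1$), and then apply the normalization $m(t)=1/\Phi(t,t)=\Psi(t,t)$ from Theorem~\ref{thm_5}. The only cosmetic difference is that the paper records the intermediate identity $\Psi(t,x)=1+m(t)\int_{-t}^x\phi^+(t,y)\,dy$ before substituting, whereas you eliminate $\phi^+$ at the outset.
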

\begin{proof} 
Integrating the second equalities of \eqref{formula_02} and using \eqref{formula_01}, 
Proposition \ref{prop_190620_1}\,(3) and Proposition \ref{prop_190620_2}\,(6), 
\begin{equation} \label{190710_1}
\Psi(t,x) = 1 + m(t) \int_{-t}^{x} \phi^+(t,y)\, dy.
\end{equation}
Substitute the first equality of \eqref{formula_02} and \eqref{formula_01} into \eqref{190710_1}, 
we obtain \eqref{eq_190620_04}. \eqref{eq_190620_05} is also proved similarly.
\end{proof}

\begin{proposition} \label{prop_190709_1} We have 
\begin{equation} \label{eq_190620_02} 
\frac{1}{m(t)} = 1 - \int_{-t}^{t} \phi^+(t,y) \, dy, \qquad 
m(t)= 1 + \int_{-t}^{t} \phi^-(t,y) \, dy.
\end{equation}
\end{proposition}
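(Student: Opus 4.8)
The plan is to derive both identities in \eqref{eq_190620_02} from the integral relation \eqref{190710_1} obtained in the proof of the previous proposition, together with formula \eqref{formula_01} from Theorem~\ref{thm_5}. First I would take $x = t$ in \eqref{190710_1}, which gives
\[
\Psi(t,t) = 1 + m(t) \int_{-t}^{t} \phi^+(t,y)\, dy.
\]
Since $\Psi(t,t) = m(t)$ by \eqref{formula_01}, dividing through by $m(t)$ yields
\[
1 = \frac{1}{m(t)} + \int_{-t}^{t} \phi^+(t,y)\, dy,
\]
which is the first equation of \eqref{eq_190620_02} after rearranging.

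For the second identity I would argue in the mirror-symmetric way, using the analogue of \eqref{190710_1} for $\Phi$ in terms of $\phi^-$. Integrating the first equality of \eqref{formula_02_b}, namely $\phi^-(t,x) = -\Phi(t,t)^{-1}(\partial/\partial x)\Phi(t,x)$, from $-t$ to $x$ and using $\Phi(t,-t)=1$ (from Proposition~\ref{prop_190620_1}\,(3)) gives
\[
\Phi(t,x) = 1 - \Phi(t,t)\int_{-t}^{x}\phi^-(t,y)\,dy,
\]
where the sign comes out correctly because $-1/\Phi(t,t) = -m(t)$ would be wrong in sign; instead one should be careful that $1/\Phi(t,t) = m(t)$ by \eqref{formula_01}, so the coefficient is $-m(t)$. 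Setting $x=t$ and using $\Phi(t,t) = 1/m(t)$ gives $1/m(t) = 1 - (1/m(t))\int_{-t}^{t}\phi^-(t,y)\,dy$, hence $1 = m(t) - \int_{-t}^{t}\phi^-(t,y)\,dy$, i.e. $m(t) = 1 + \int_{-t}^{t}\phi^-(t,y)\,dy$. Alternatively, and perhaps more cleanly, one integrates the second equality of \eqref{formula_02_b}, $\phi^-(t,x) = \Psi(t,t)^{-1}(\partial/\partial t)\Psi(t,x)$, and combines with \eqref{eq_190620_04}; either route is a short computation.

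The only point requiring care is bookkeeping of signs and of the boundary values $\Phi(t,-t)=\Psi(t,-t)=1$, which are supplied by Proposition~\ref{prop_190620_1}\,(3); there is no real obstacle, since all the analytic content (differentiability in $t$, the resolvent identities, the exponential formula for $m(t)$) has already been established in Propositions~\ref{prop_190620_1}, \ref{prop_190620_2}, and~\ref{prop_190706_1} and in Theorem~\ref{thm_5}. Thus the proof is essentially a one-line substitution in each case, and I would present it as such.
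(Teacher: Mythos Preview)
Your argument is correct and matches the paper's proof exactly: the paper simply says to take $x=t$ in \eqref{190710_1} and invoke \eqref{formula_01}, with the second identity ``proved similarly,'' which is precisely what you do. Your parenthetical remark that ``the coefficient is $-m(t)$'' is a slip (the coefficient in your displayed formula is $-\Phi(t,t)=-1/m(t)$), but the actual computation that follows uses the correct value $1/m(t)$, so the proof stands.
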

\begin{proof} The first equality is obtained by taking $x=t$ in \eqref{190710_1} 
and noting \eqref{formula_01}.  
The second equality is proved similarly. 
\end{proof}

\begin{proposition}
The following partial differential equations hold
\begin{equation} \label{s324}
\frac{\partial}{\partial t} \phi_t^{+}(x) - \frac{\partial}{\partial x} \phi_t^{-}(x) 
 = - \mu(t) \phi_t^{-}(x), \qquad 
\frac{\partial}{\partial t} \phi_t^{-}(x) - \frac{\partial}{\partial x} \phi_t^{+}(x) 
 = \mu(t) \phi_t^-(x).
\end{equation}
\end{proposition}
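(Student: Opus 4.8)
The plan is to obtain both identities in \eqref{s324} by differentiating the closed-form expressions for $\phi^+(t,x)$ and $\phi^-(t,x)$ provided by Proposition~\ref{prop_190706_1}, together with the relation $m'(t)=\mu(t)\,m(t)$, which is immediate from the definition of $m$ in \eqref{s323}. First I would rewrite \eqref{formula_02} and \eqref{formula_02_b} using $\Phi(t,t)=1/m(t)$ and $\Psi(t,t)=m(t)$ (Theorem~\ref{thm_5}), obtaining
\[
\partial_t\Phi=-\phi^+/m,\quad
\partial_x\Phi=-\phi^-/m,\quad
\partial_x\Psi=m\,\phi^+,\quad
\partial_t\Psi=m\,\phi^-,
\]
where throughout $\phi^\pm=\phi^\pm(t,x)$, the derivatives are evaluated at $(t,x)$, and $m=m(t)$ does not depend on $x$.

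For the first identity I would differentiate $\phi^+=m^{-1}\,\partial_x\Psi$ in $t$, using $m'=\mu m$ and the symmetry of the mixed second partials of $\Psi$:
\[
\partial_t\phi^+
=-\frac{m'}{m^2}\,\partial_x\Psi+\frac{1}{m}\,\partial_t\partial_x\Psi
=-\mu\,\phi^++\frac{1}{m}\,\partial_x(\partial_t\Psi);
\]
since $\partial_t\Psi=m\,\phi^-$ and $m$ is $x$-independent, the last term equals $\partial_x\phi^-$, so $\partial_t\phi^+-\partial_x\phi^-=-\mu(t)\,\phi^+(t,x)$ (the first equation of \eqref{s324}, with right-hand side $-\mu(t)\,\phi_t^+(x)$). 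For the second identity I would instead differentiate $\phi^-=-m\,\partial_x\Phi$ in $t$: here $-m'\,\partial_x\Phi=\mu\,\phi^-$, while $-m\,\partial_t\partial_x\Phi=-m\,\partial_x(\partial_t\Phi)=-m\,\partial_x(-\phi^+/m)=\partial_x\phi^+$, whence $\partial_t\phi^--\partial_x\phi^+=\mu(t)\,\phi^-(t,x)$, which is the second equation of \eqref{s324}.

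The only step requiring care is the interchange $\partial_t\partial_x=\partial_x\partial_t$ used above, i.e. that $\Psi$ (for the first identity) and $\Phi$ (for the second) have continuous mixed second partials off the relevant discrete exceptional set. I would justify this by differentiating the first-order relations once more and quoting regularity already on record: for example $\partial_x\partial_t\Psi=\partial_x(m\,\phi^-)=m\,\partial_x\phi^-$ is continuous off $\{\lambda-t:\lambda\in\Lambda\}$ by Proposition~\ref{prop_190620_2}\,(1), whereas $\partial_t\partial_x\Psi=\partial_t(m\,\phi^+)=m'\phi^++m\,\partial_t\phi^+$ is continuous off $\{\lambda-x:\lambda\in\Lambda\}$ by Proposition~\ref{prop_190620_2}\,(2); the analogous facts for $\Phi$ follow in the same way via $\phi^\pm$, using also Propositions~\ref{prop_190620_1} and \ref{prop_190706_1}. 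Since both mixed second partials then exist and are continuous on the complement of a discrete subset of the $(t,x)$-domain, they coincide there by Schwarz's theorem and \eqref{s324} holds at every such point. Tracking these exceptional sets coming from $\Lambda$ is the main---and only minor---obstacle; the identities themselves are the two one-line differentiations above.
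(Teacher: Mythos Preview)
Your argument is correct and is essentially the paper's own proof: both differentiate $\phi^+=\Psi_x/\Psi(t,t)$ in $t$ and $\phi^-=\Psi_t/\Psi(t,t)$ in $x$, then cancel the mixed partial, the only cosmetic difference being that you substitute $\Psi(t,t)=m(t)$ and use $m'=\mu m$ whereas the paper keeps $\Psi(t,t)$ and uses $\tfrac{d}{dt}\log\Psi(t,t)=\mu$. You are also right that the first equation in \eqref{s324} as printed has a typo on the right-hand side: the computation in both your argument and the paper's yields $-\mu(t)\phi^{+}_t(x)$, not $-\mu(t)\phi^{-}_t(x)$.
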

\begin{proof} We have 
\[
\frac{\partial}{\partial t}\phi^+(t,x) = -\frac{\Psi_t(t,t)+\Psi_x(t,t)}{\Psi(t,t)}\phi^+(t,x) + \frac{\Psi_{xt}(t,x)}{\Psi(t,t)}, \quad 
\frac{\partial}{\partial x}\phi^-(t,x) = \frac{\Psi_{tx}(t,x)}{\Psi(t,t)}
\]
by \eqref{formula_02} and \eqref{formula_02_b}. On the other hand, 
\[
\frac{\Psi_t(t,t)+\Psi_x(t,t)}{\Psi(t,t)} = \frac{d}{dt} \log m(t) = \mu(t).  
\]
by \eqref{formula_01} and Proposition \ref{prop_190620_2}\,(6). 
Hence we obtain the first equation. The second equation is proved similarly. 
\end{proof}

For convenience in later sections, we put 
\begin{equation} \label{190709_3}
\frak{A}(t,z)=m(t)^{-1}A(t,z), \qquad \frak{B}(t,z)=m(t)B(t,z). 
\end{equation}

\begin{lemma} \label{lem_5_1} For $\Im(z)>c$, we have 
\begin{equation} \label{190703_1}
\aligned
\frak{A}(t,z)
& = \frac{1}{2}E(z)\left( e^{izt} + \int_{t}^{\infty} \phi^+(t,x) e^{izx} \, dx \right), 
\\ 
-i \frak{B}(t,z)
& = \frac{1}{2}E(z)\left( e^{izt} - \int_{t}^{\infty} \phi^-(t,x) e^{izx} \, dx \right). 
\endaligned
\end{equation}
%
%
\end{lemma}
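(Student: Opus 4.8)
The plan is to start from the definitions \eqref{eq_190625_1} of $A(t,z)$ and $B(t,z)$, multiply by $m(t)^{\mp 1}$ as in \eqref{190709_3}, and rewrite the integrals $\int_t^\infty \Psi(t,x)e^{izx}\,dx$ and $\int_t^\infty \Phi(t,x)e^{izx}\,dx$ in terms of $\phi^\pm(t,x)$ using Proposition \ref{prop_190706_1}. The cleanest route is via the second equalities in \eqref{formula_02} and \eqref{formula_02_b}, namely $(\partial/\partial x)\Psi(t,x) = \Psi(t,t)\,\phi^+(t,x)$ and $(\partial/\partial x)\Phi(t,x) = -\Phi(t,t)\,\phi^-(t,x)$. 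First I would handle $\frak{A}(t,z)$. Integrating $(\partial/\partial x)\Psi(t,x)$ from $t$ to $X$ gives $\Psi(t,X) - \Psi(t,t) = \Psi(t,t)\int_t^X \phi^+(t,x)\,dx$, and by Proposition \ref{prop_190620_1}\,(3), together with $\Im(z)>c$, the boundary term $\Psi(t,X)e^{izX}$ vanishes as $X\to+\infty$. An integration by parts in $\int_t^\infty \Psi(t,x)e^{izx}\,dx$, using $e^{izx} = \frac{1}{iz}\frac{d}{dx}e^{izx}$, then yields
\begin{equation*}
\int_t^\infty \Psi(t,x)\,e^{izx}\,dx
= -\frac{1}{iz}\Psi(t,t)\,e^{izt} - \frac{1}{iz}\int_t^\infty \Psi_x(t,x)\,e^{izx}\,dx,
\end{equation*}
and substituting $\Psi_x(t,x) = \Psi(t,t)\phi^+(t,x)$ and $\Psi(t,t)=m(t)$ from Theorem \ref{thm_5} gives
\begin{equation*}
\int_t^\infty \Psi(t,x)\,e^{izx}\,dx
= -\frac{m(t)}{iz}\left( e^{izt} + \int_t^\infty \phi^+(t,x)\,e^{izx}\,dx \right).
\end{equation*}
Plugging this into the first line of \eqref{eq_190625_1}, i.e. $A(t,z) = -\frac{iz}{2}E(z)\int_t^\infty \Psi(t,x)e^{izx}\,dx$, the factor $-iz$ and the $\frac{1}{iz}$ cancel, a factor $m(t)$ appears, and dividing by $m(t)$ as in \eqref{190709_3} produces exactly the first formula of \eqref{190703_1}.

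The computation for $-i\frak{B}(t,z)$ is parallel: from the second line of \eqref{eq_190625_1} one has $-iB(t,z) = -\frac{iz}{2}E(z)\int_t^\infty \Phi(t,x)e^{izx}\,dx$; integrating by parts, using $\Phi_x(t,x) = -\Phi(t,t)\phi^-(t,x)$ and $\Phi(t,t) = 1/m(t)$ from Theorem \ref{thm_5}, gives
\begin{equation*}
\int_t^\infty \Phi(t,x)\,e^{izx}\,dx
= -\frac{1}{iz\,m(t)}\left( e^{izt} - \int_t^\infty \phi^-(t,x)\,e^{izx}\,dx \right),
\end{equation*}
and multiplying by $-\frac{iz}{2}E(z)$ and then by $m(t)$ (per \eqref{190709_3}) yields the second formula of \eqref{190703_1}.

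The main technical point to be careful about is the justification of the integration by parts and the vanishing of the boundary term at $+\infty$: one needs $\Psi(t,\cdot)e^{izx}$ and $\phi^+(t,\cdot)e^{izx}$ to be integrable on $[t,\infty)$ and the limit $\Psi(t,X)e^{izX}\to 0$ as $X\to\infty$. All of this follows from the exponential bounds $\Phi(t,x),\Psi(t,x)\ll e^{c'x}$ (Proposition \ref{prop_190620_1}\,(3)) and $\phi^\pm(t,x)\ll e^{cx}$ (Proposition \ref{prop_190620_2}\,(3)), combined with $\Im(z)>c$ (choosing $c<c'<\Im(z)$), so the integrands decay exponentially; this also guarantees that all the integrals in \eqref{190703_1} converge for $\Im(z)>c$. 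A secondary point is that $\Psi(t,x)$ need only be continuously differentiable in $x$ off a discrete set (Proposition \ref{prop_190620_1}\,(2) actually gives $C^1$ in $x$, so this is not an obstruction), so the integration by parts is valid in the ordinary sense. Beyond these routine estimates, no real obstacle is expected: the identity is essentially a bookkeeping rearrangement of the definitions through the relations already established in Propositions \ref{prop_190620_1}, \ref{prop_190706_1} and Theorem \ref{thm_5}.
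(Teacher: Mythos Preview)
Your proposal is correct and follows essentially the same route as the paper: both arguments use the second equalities in \eqref{formula_02} and \eqref{formula_02_b} to express $\Psi_x$ and $\Phi_x$ in terms of $\phi^\pm$, integrate by parts in $\int_t^\infty \Psi(t,x)e^{izx}\,dx$ and $\int_t^\infty \Phi(t,x)e^{izx}\,dx$, and then compare with the definitions \eqref{eq_190625_1} and \eqref{190709_3} (the paper writes the identity in the reverse direction but it is the same computation). Your justification of the boundary term and convergence via Propositions \ref{prop_190620_1}\,(3) and \ref{prop_190620_2}\,(3) matches the paper's appeal to Proposition \ref{prop_190620_2}\,(3).
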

\begin{proof}
Using \eqref{formula_02} and \eqref{formula_02_b}, 
\[
e^{izt} + \int_{t}^{\infty} \phi^+(t,x) e^{izx} \, dx 
=  - iz \int_{t}^{\infty} \frac{\Psi(t,x)}{\Psi(t,t)} e^{izx} \, dx,
\]
\[
e^{izt} - \int_{t}^{\infty} \phi^-(t,x) e^{izx} \, dx 
= - iz \int_{t}^{\infty}  \frac{\Phi(t,x)}{\Phi(t,t)} e^{izx} \, dx.
\]
Hence we obtain \eqref{190703_1} by definition \eqref{eq_190625_1}. 
The convergence of integrals is justified by Proposition \ref{prop_190620_2}\,(3). 
\end{proof}

%
%
\section{Proof of Theorem \ref{thm_1}} \label{section_3}
%
%

We suppose that $E$ satisfies (K1)$\sim$(K5) throughout this section. 
%
We use the same notation \eqref{eq_Fourier} for the Fourier transforms on $L^1(\R)$ and $L^2(\R)$ if no confusion arises. 
If we understand the right-hand sides of \eqref{eq_Fourier}  in $L^2$-sense, 
they provide isometries on $L^2(\R)$ up to a constant multiple: 
$\Vert \mathsf{F}f \Vert_{L^2(\R)}^2 = 2\pi \Vert f \Vert_{L^2(\R)}^2$, 
$\Vert \mathsf{F}^{-1}f \Vert_{L^2(\R)}^2 = (2\pi)^{-1} \Vert f \Vert_{L^2(\R)}^2$. 

%
%
\subsection{Proof of (1)} 
%
%
From \eqref{s304_2}, we have 
\[
\aligned 
\mathbf{1}_{[t,\infty)}(x)\Psi(t,x) 
& =\mathbf{1}_{(-\infty,-t)}(x) - \mathbf{1}_{(-\infty,t)}(x)\Psi(t,x) \\
& \quad +  \mathbf{1}_{[-t,\infty)}(x) +\int_{-\infty}^{t}K(x+y)\Psi(t,y)\,dy.
\endaligned 
\]
By taking Fourier transform of both sides for $\Im z>c$  
using Proposition \ref{prop_190620_1} (3), 
\[
\aligned 
\int_{t}^{\infty} \Psi(t,x) e^{izx} \, dx  
& = - \int_{-t}^{t} \Psi(t,x) e^{izx} \, dx \\
& \quad + \frac{e^{-izt}}{-iz}+\frac{E(-z)}{E(z)}
\left\{
\int_{-t}^{t}\Psi(t,x)e^{-izx}\,dx 
+ 
\frac{e^{izt}}{-iz}
\right\}
\endaligned 
\]
Since the right-hand side is analytic in $\C\setminus\mathcal{Z}$, 
it gives the analytic continuation 
of the left-hand side to this region. 
Also, by definition \eqref{eq_190625_1}, 
\[
\aligned 
A(t,z) 
& =  \frac{E(z)}{2}\left\{ e^{-izt}  + iz  \int_{-t}^{t} \Psi(t,x) e^{izx} \, dx \right\} \\
& \quad 
+\frac{E(-z)}{2}
\left\{ e^{izt} + i(-z)\int_{-t}^{t}\Psi(t,x)e^{-izx}\,dx
\right\}, \\
-iB(t,z)
& =  \frac{E(z)}{2}\left\{ e^{-izt}  + iz  \int_{-t}^{t} \Phi(t,x) e^{izx} \, dx \right\} \\
& \quad 
-\frac{E(-z)}{2}
\left\{ e^{izt} + i(-z)\int_{-t}^{t}\Phi(t,x)e^{-izx}\,dx 
\right\}.
\endaligned 
\]
Since $\Phi(t,x)$ and $\Psi(t,x)$ are obviously real-valued functions, 
these formulas and  (K1) show the desired properties of $A(t,z)$ and $B(t,z)$. 
\hfill $\Box$ 

%
%
\subsection{Proof of (2) and (3)} 
%
%
By Proposition \ref{prop_190706_1}, 
formulas \eqref{formula_02}, \eqref{formula_02_b},  
and Proposition \ref{prop_190620_2}\,(3), 
$A(t,z)$ and $B(t,z)$ are differentiable with respect to $t$. 
Therefore, it remains to show that 
$(\partial/\partial t)A(t,z)$ $= z m(t)^2B(t,z)$ 
and $(\partial/\partial t)B(t,z)= -z m(t)^{-2}A(t,z)$.  
Using  \eqref{formula_01} and \eqref{formula_02_b}, we have 
\[
\aligned 
\frac{\partial}{\partial t} A(t,z)
& = - \frac{iz}{2}E(z) \left( 
- \Psi(t,t) e^{izt} + \int_{t}^{\infty} \frac{\partial}{\partial t}\Psi(t,x) e^{izx} \, dx
\right) \\
& = - \frac{iz}{2}E(z) \left( 
- \Psi(t,t) e^{izt} - \frac{\Psi(t,t)}{\Phi(t,t)} \int_{t}^{\infty} \frac{\partial}{\partial x}\Phi(t,x) e^{izx} \, dx
\right) \\
& = - \frac{iz}{2}E(z) \left( 
 \frac{\Psi(t,t)}{\Phi(t,t)} iz\int_{t}^{\infty} \Phi(t,x) e^{izx} \, dx
\right)
= z \frac{\Psi(t,t)}{\Phi(t,t)} B(t,z) = z m(t)^2 B(t,z). 
\endaligned 
\]
The second equality is proved similarly. 
\hfill $\Box$

%
%
\subsection{Proof of (4)} 
%
%
For $t \leq 0$, we have
\begin{equation} \label{s331}
\aligned 
A(t,z) & = A(z)\cos(tz)+B(z)\sin(tz), \\
B(t,z) & = -A(z)\sin(tz)+B(z)\cos(tz) \\
\endaligned 
\end{equation} 
by \eqref{190706_2}, \eqref{s104} and \eqref{eq_190625_1}. 
In particular, Theorem \ref{thm_1} (4) holds. 
%
\comment{
Integrating the second equality of \eqref{190706_2} 
on $x$ from $t$ to $\infty$, 
\[
\aligned 
A(t,z)
& = -\frac{iz}{2}E(z)\left( -\frac{1}{iz}e^{izt} -\frac{1}{iz}e^{-izt}\int_{0}^{\infty} K(y) e^{izy}\,dy \right) \\
& = \frac{1}{2}E(z)\left(e^{itz}+e^{-itz}\Theta(z) \right) 
= A(z)\cos(tz)+B(z)\sin(tz)
\endaligned 
\] 
if $\Im(z)>c$ by \eqref{s104} and \eqref{eq_190625_1}. 
The case of $B(t,z)$ is proved in the same way. 
}
\hfill $\Box$

%
%
\section{Proof of Theorems \ref{thm_2} and \ref{thm_3}} \label{section_4}
%
%

\begin{lemma} \label{lem_3_2}
Let $E$ be a function satisfying (K1)$\sim$(K3). 
Define $\mathsf{K}f=\lim_{t\to\infty}\mathsf{K}[t]f$ 
in pointwise convergence for $f$ in the space $C_c^{\infty}(\R)$ of all compactly supported smooth function on $\R$. 
Then the Fourier integral formula 
\begin{equation} \label{s303}
(\mathsf{F}\mathsf{K}f)(z) = \Theta_E(z)\,(\mathsf{F}f)(-z)
\end{equation}
holds for $\Im(z)>c$. 
Suppose that $\Theta_E=E^\sharp/E$ is an inner function in $\C_+$ in addition to (K1)$\sim$(K3). 
Then $\mathsf{K}f$ belongs to $L^2(\R)$ for $f \in C_c^{\infty}(\R)$, 
and the linear map $f \mapsto \mathsf{K} f$ extends to the isometry 
$\mathsf{K}: L^2(\R) \to L^2(\R)$ satisfying $\mathsf{K}^2={\rm id}$ and 
\eqref{s303} holds for $z \in \R\setminus\mathcal{Z}$. 
Moreover, \eqref{s303} holds for $z \in (\C_+ \cup \R)\setminus\mathcal{Z}$, 
if $f \in L^2(\R)$ has support in $(-\infty,t]$ for some $t \in \R$. 
\end{lemma}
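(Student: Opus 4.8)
The plan is to establish \eqref{s303} first at the formal level for $f \in C_c^\infty(\R)$ and $\Im(z) > c$, then upgrade to $L^2$ statements under the innerness hypothesis. For the first assertion, I would start from the definition of $\mathsf{K}[t]$ in (K5), write $(\mathsf{K}[t]f)(x) = \mathsf{1}_{(-\infty,t)}(x)\int_{-\infty}^{t} K(x+y) f(y)\,dy$, and observe that for $f$ with compact support in $(-\infty, T)$ the truncation at $t$ disappears once $t \geq T$, so $(\mathsf{K}f)(x) = \int_{-\infty}^{\infty} K(x+y) f(y)\,dy$ (the outer characteristic function also becoming irrelevant for the Fourier integral that follows, using (K3) to control the tail as in Proposition \ref{prop_190620_1}). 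Then I would compute $(\mathsf{F}\mathsf{K}f)(z) = \int e^{izx}\int K(x+y)f(y)\,dy\,dx$; substituting $u = x+y$ and using Fubini (justified by the bound $|K(x)| \ll e^{c|x|}$ from (K2) together with $\Im(z) > c$ and the compact support of $f$) gives $(\mathsf{F}K)(z)\cdot(\mathsf{F}f)(-z)$, which equals $\Theta_E(z)(\mathsf{F}f)(-z)$ by (K2) since $(\mathsf{F}K)(z) = E^\sharp(z)/E(z) = \Theta_E(z)$ for $\Im(z) > c$. This is the formal Fourier identity \eqref{s303}.

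For the second part, assume $\Theta_E$ is inner in $\C_+$. On the line $z = u \in \R \setminus \mathcal{Z}$ one has $|\Theta_E(u)| = 1$ by \eqref{105} (or by the boundary behavior of an inner function), so for $f \in C_c^\infty(\R)$ the Plancherel computation $\|\mathsf{K}f\|_{L^2}^2 = (2\pi)^{-1}\|\mathsf{F}\mathsf{K}f\|_{L^2}^2 = (2\pi)^{-1}\int |\Theta_E(u)|^2 |(\mathsf{F}f)(-u)|^2\,du = \|f\|_{L^2}^2$ shows $\mathsf{K}f \in L^2(\R)$ with $\|\mathsf{K}f\|_{L^2} = \|f\|_{L^2}$; here I need to know that $(\mathsf{F}\mathsf{K}f)(u)$, a priori defined only for $\Im(z) > c$, actually has boundary values given by $\Theta_E(u)(\mathsf{F}f)(-u)$ — this follows because $(\mathsf{F}f)(z)$ is entire of exponential type (Paley–Wiener) and $\Theta_E$ is analytic across $\R \setminus \mathcal{Z}$ with $\mathcal{Z}$ discrete, so the right-hand side of \eqref{s303} is a well-defined meromorphic extension and the identity persists. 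Density of $C_c^\infty(\R)$ in $L^2(\R)$ then extends $\mathsf{K}$ to an isometry on $L^2(\R)$. The relation $\mathsf{K}^2 = \mathrm{id}$ follows by applying \eqref{s303} twice: $(\mathsf{F}\mathsf{K}^2 f)(z) = \Theta_E(z)(\mathsf{F}\mathsf{K}f)(-z) = \Theta_E(z)\Theta_E(-z)(\mathsf{F}f)(z) = (\mathsf{F}f)(z)$ by \eqref{105}, and injectivity of $\mathsf{F}$ gives $\mathsf{K}^2 f = f$; since both sides are continuous in $f$ this holds on all of $L^2(\R)$.

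For the final claim — that \eqref{s303} extends to $z \in (\C_+ \cup \R) \setminus \mathcal{Z}$ when $f \in L^2(\R)$ is supported in $(-\infty, t]$ — I would argue that such $f$ has $(\mathsf{F}f)(z) = \int_{-\infty}^{t} f(x) e^{izx}\,dx$ defining an analytic function on $\C_+$ (integrability at $-\infty$ is automatic since $e^{izx}$ decays there for $\Im z > 0$) with $L^2$ boundary values on $\R$, while $\mathsf{K}f$, being $\mathsf{K}$ applied to something supported in $(-\infty,t]$, relates to $\mathsf{K}[t']f$ for $t' \geq t$ and one checks $\mathsf{K}f$ is supported in... actually here the cleaner route is: both sides of \eqref{s303} are analytic on $\C_+ \setminus \mathcal{Z}$ (left side because $\mathsf{F}(\mathsf{K}f)$ with $\mathsf{K}f \in L^2$ is a priori only Hardy-space, but combined with the support structure one gets analyticity; right side manifestly so as $\Theta_E$ is analytic there and $(\mathsf{F}f)(-z)$ is analytic on $\C_-$... ), they agree on $\{\Im z > c\}$ by the first part, hence agree throughout by the identity theorem. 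The main obstacle I anticipate is precisely this last analyticity bookkeeping: carefully justifying that $\mathsf{F}(\mathsf{K}f)$ admits an analytic continuation to $(\C_+ \cup \R) \setminus \mathcal{Z}$ rather than just possessing $L^2$ boundary values, which requires exploiting the one-sided support of $f$ to see that $\mathsf{K}f$ decays suitably (using (K2)–(K3) and the structure of $K$) so that its Fourier integral converges in a half-plane larger than $\{\Im z > c\}$, and then matching this with the meromorphic function $\Theta_E(z)(\mathsf{F}f)(-z)$ across the discrete exceptional set $\mathcal{Z}$.
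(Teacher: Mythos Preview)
Your argument for Part 1 (the identity \eqref{s303} for $f\in C_c^\infty(\R)$ and $\Im z>c$) is correct: the support and growth bounds from (K2)--(K3) justify Fubini and the substitution, and $(\mathsf{F}K)(z)=\Theta_E(z)$ closes the computation.

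The gap is in Part 2. Your Plancherel step is circular: the identity $\Vert g\Vert_{L^2}^2=(2\pi)^{-1}\Vert\mathsf{F}g\Vert_{L^2}^2$ presupposes $g\in L^2$, so you cannot invoke it to \emph{conclude} $\mathsf{K}f\in L^2$. Your attempted repair (``the right-hand side is a well-defined meromorphic extension and the identity persists'') only shows that the function $z\mapsto\Theta_E(z)(\mathsf{F}f)(-z)$ is pleasant; it does not by itself identify that function with the $L^2$-Fourier transform of $\mathsf{K}f$, which again would require knowing $\mathsf{K}f\in L^2$. The route taken in the reference \cite[Lemma~2.2]{Su19_1} (and visible in the proof of Proposition~\ref{prop_190626_2} here) avoids this by reversing the logic: first define $\mathsf{L}f:=\mathsf{F}^{-1}M_{\Theta_E}\mathsf{F}\mathsf{J}f$ on all of $L^2(\R)$ (with $\mathsf{J}f(x)=f(-x)$), which is \emph{manifestly} an isometry because $|\Theta_E|=1$ a.e.\ on $\R$; then check $\mathsf{L}f=\mathsf{K}f$ for $f\in C_c^\infty$ by comparing ordinary Fourier integrals on a line $\Im z=c'>c$, where both $(\mathsf{K}f)(x)e^{-c'x}$ and $(\mathsf{L}f)(x)e^{-c'x}$ lie in $L^1$ and Fourier uniqueness applies. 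This gives $\mathsf{K}f\in L^2$ and the isometry in one stroke, after which $\mathsf{K}^2=\mathrm{id}$ follows exactly as you wrote via \eqref{105}.

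Once $\mathsf{K}=\mathsf{L}$ is established on $L^2$, your Part~3 difficulties also dissolve: for $f\in L^2$ supported in $(-\infty,t]$, condition (K3) forces $\mathsf{K}f$ to be supported in $[-t,\infty)$, so $(\mathsf{F}\mathsf{K}f)(z)$ is analytic in $\C_+$; likewise $(\mathsf{F}f)(-z)$ is analytic there, and $\Theta_E$ is analytic in $\C_+$ since it is inner. The two sides of \eqref{s303} then agree a.e.\ on $\R$ by the very definition of $\mathsf{L}$, hence throughout $\C_+$ by the identity principle, and across $\R\setminus\mathcal{Z}$ because $\Theta_E$ is analytic there. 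The ``analyticity bookkeeping'' you flag as the main obstacle is thus not an obstacle once the operator is built on the Fourier side first.
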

\begin{proof} 
This is proved by almost the same argument as \cite[Lemma 2.2]{Su19_1}, 
because the difference of condition (K1) between this paper and \cite{Su19_1} 
does not affect the argument of the proof. 
\end{proof}

\comment{
\begin{corollary} 
Let $E$ be a function satisfying (K1)$\sim$(K3). Suppose that $\Theta=E^\sharp/E$ is inner in $\C_+$. 
Then the integral $\int_{0}^{\infty} K(x)e^{izx} \, dx$ absolutely converges 
and $\Theta(z)=\mathsf{F}K(z)$ for $z \in \C_+$. 
\end{corollary}
\begin{proof}
For $z \in \C_+$, $\mathsf{1}_{\geq -x}(y)e^{izy}$ belongs to $L^2(0,\infty)$. 
Thus $\int_{-x}^{\infty} K(x+y)e^{izy} \, dy = e^{-izx}\int_{0}^{\infty} K(y)e^{izy} \, dy$ 
{\color{red} あれ? $H^2 \subset L^2$ の Fourier 変換の一般論でいけないか?}
\end{proof}
}

Here we recall the basic properties of the Hardy spaces. 
The Hardy space $H^2 = H^2(\C_+)$ in $\C_+$ 
is defined to be the space of all analytic functions $f$ in $\C_+$ endowed with 
norm $\Vert f \Vert_{H^2}^2 := \sup_{v>0} \int_{\R} |f(u+iv)|^2 \, du < \infty$. 
The Hardy space $\bar{H}^2 = H^2(\C_-)$ in the lower half-plane $\C_-$ is defined similarly. 
As usual, we identify $H^2$ and $\bar{H}^2$ with subspaces of $L^2(\R)$ 
via nontangential boundary values on the real line such that $L^2(\R)=H^2 \oplus \bar{H}^2$, 
where $L^2(\R)$ has the inner product $\langle f,g \rangle = \int_{\R} f(u)\overline{g(u)}du$. 
The Hardy space 
$H^2$ is a reproducing kernel Hilbert space, 
in particular, the point evaluation functional 
$f \mapsto f(z)$ is continuous for each $z \in \C_+$ 
and it is represented by $k_z(w)=(2\pi i)^{-1}(\bar{z}-w)^{-1} \in H^2$
as $\langle f, k_z \rangle = f(z)$. 

\begin{lemma} \label{lem_190628_1} Let $\theta$ be an analytic function defined in $\C_+$. 
Suppose that $\theta f \in H^2$ for every $f \in H^2$. 
Then the pointwise multiplication operator $M_\theta: f \mapsto \theta f$ 
is bounded on $H^2$ and $\theta \in H^\infty$. 
\end{lemma}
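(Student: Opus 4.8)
The plan is to use the closed graph theorem together with the reproducing kernel structure of $H^2$. First I would show that $M_\theta: H^2 \to H^2$ is a closed operator: suppose $f_n \to f$ in $H^2$ and $\theta f_n \to g$ in $H^2$. Since norm convergence in $H^2$ implies pointwise convergence in $\C_+$ (because, as recalled just above, $f \mapsto f(z)$ is continuous for each $z \in \C_+$, being represented by $k_z$), we get $f_n(z) \to f(z)$ and $(\theta f_n)(z) \to g(z)$ for every $z \in \C_+$. But $(\theta f_n)(z) = \theta(z) f_n(z) \to \theta(z) f(z)$ pointwise as well, since $\theta$ is analytic hence finite at each $z \in \C_+$. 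Therefore $g(z) = \theta(z) f(z)$ for all $z \in \C_+$, i.e. $g = \theta f = M_\theta f$, so the graph of $M_\theta$ is closed. By the closed graph theorem, $M_\theta$ is bounded on $H^2$.

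Next I would extract the bound $\theta \in H^\infty$ from the operator norm of $M_\theta$. Apply $M_\theta$ to the reproducing kernel $k_z(w) = (2\pi i)^{-1}(\bar z - w)^{-1}$. One computes $\langle M_\theta k_z, k_z \rangle = \langle \theta k_z, k_z \rangle = (\theta k_z)(z) = \theta(z) k_z(z) = \theta(z)\Vert k_z\Vert_{H^2}^2$, using that $\langle h, k_z\rangle = h(z)$ for $h \in H^2$ and that $\theta k_z \in H^2$ by hypothesis. Hence
\begin{equation*}
|\theta(z)| = \frac{|\langle M_\theta k_z, k_z\rangle|}{\Vert k_z\Vert_{H^2}^2} \leq \frac{\Vert M_\theta\Vert\, \Vert k_z\Vert_{H^2}^2}{\Vert k_z\Vert_{H^2}^2} = \Vert M_\theta\Vert
\end{equation*}
by Cauchy--Schwarz. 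Since $z \in \C_+$ was arbitrary, $\theta$ is bounded in $\C_+$ by $\Vert M_\theta\Vert$; together with the analyticity of $\theta$ assumed in the statement, this gives $\theta \in H^\infty$ with $\Vert \theta\Vert_\infty \leq \Vert M_\theta\Vert$. (Conversely $\Vert M_\theta\Vert \leq \Vert\theta\Vert_\infty$ is immediate, so in fact $\Vert M_\theta\Vert = \Vert\theta\Vert_\infty$, though only the stated direction is needed.)

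The main obstacle, such as it is, is verifying that the graph is closed without circularity — in particular making sure one does not tacitly assume $M_\theta$ is bounded when passing limits. The key point that resolves this is that $H^2$-norm convergence controls pointwise values uniformly on compact subsets of $\C_+$ (indeed pointwise via the kernel bound $|f(z)| \le \Vert f\Vert_{H^2}\Vert k_z\Vert_{H^2}$), so the two limit identifications $f_n(z) \to f(z)$ and $(\theta f_n)(z) \to g(z)$ are legitimate for each fixed $z$, and then the elementary fact that multiplication by the fixed scalar $\theta(z)$ is continuous on $\C$ closes the argument. No assumption that $\theta$ has boundary values or is a priori bounded is used; these are consequences. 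This completes the proof.
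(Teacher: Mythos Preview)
Your proof is correct and follows essentially the same approach as the paper: the paper's proof simply cites \cite[Lemma 5.1]{Su19_1}, and the (commented-out) argument in the source likewise obtains boundedness of $M_\theta$ from the closed graph theorem together with continuity of point evaluations, then bounds $|\theta(z)|$ via the reproducing kernels $k_z$. The only cosmetic difference is that the paper observes $M_\theta^\ast k_z = \overline{\theta(z)}\,k_z$ (so $|\theta(z)|\le \Vert M_\theta^\ast\Vert$), whereas you reach the same bound via Cauchy--Schwarz applied to $\langle M_\theta k_z, k_z\rangle$; these are equivalent one-line computations.
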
 
\begin{proof} See \cite[Lemma 5.1]{Su19_1}. 
\comment{
We find that $M_\theta$ is bounded from the closed graph theorem and continuity of point evaluations. 
Let $k_z \in H^2$ be the vector representing the point evaluation at $z \in \C_+$.
Then, we have
\[
\langle M_\theta f,  k_z \rangle 
= \langle \theta f,  k_z \rangle 
= \theta(z)f(z)
= \langle f,  \overline{\theta}(z) k_z \rangle. 
\]
for $f \in H^2$. 
Therefore, the adjoint $M_\theta^\ast$ acts on $k_z$ as $M_\theta^\ast k_z = \overline{\theta(z)}k_z$. 
This implies $k_z$ is an eigenvector of $M_\theta^\ast$ with eigenvalue $\overline{\theta(z)}$. 
Hence, $|\theta(z)| \leq \Vert M_\theta \Vert_{\rm op}$ for every $z \in \C_+$. 
This shows that $\theta(z)$ is uniformly bounded on $\C_+$. 
}
\end{proof}

\begin{proposition} \label{prop_190626_2}
Suppose that $E$ satisfies (K1)$\sim$(K3). 
Then the following are equivalent:
\begin{enumerate}
\item $\Theta_E=E^\sharp/E$ is an inner function in $\C_+$. 
\item $\mathsf{K}[t]$ converges as $t \to \infty$ with respect to the operator norm $\Vert\cdot\Vert_{\rm op}$. 
\item $\mathsf{K}[t]f$ converges as $t \to \infty$ with respect to $\Vert\cdot\Vert_{L^2(\R)}$ for all $f \in L^2(\R)$.
\item there exists $M>0$ such that $\Vert \mathsf{K}[t] \Vert_{\rm op} \leq M$ for every $t>0$. 
\end{enumerate}
\end{proposition}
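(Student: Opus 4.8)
The plan is to prove the cycle of implications $(1)\Rightarrow(3)\Rightarrow(2)\Rightarrow(4)\Rightarrow(1)$, with the Fourier-side description of $\mathsf{K}$ from Lemma \ref{lem_3_2} doing most of the work. For $(1)\Rightarrow(3)$: assuming $\Theta_E$ is inner, Lemma \ref{lem_3_2} gives that $\mathsf{K}=\lim_{t\to\infty}\mathsf{K}[t]$ is a well-defined isometry on $L^2(\R)$ with $(\mathsf{F}\mathsf{K}f)(z)=\Theta_E(z)(\mathsf{F}f)(-z)$. One must then upgrade pointwise convergence on $C_c^\infty(\R)$ to $L^2$-norm convergence for every $f\in L^2(\R)$. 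Since each $\mathsf{K}[t]$ is self-adjoint and $\|\mathsf{K}[t]\|_{\rm op}\leq 1$ (this should already be recorded: for $t$ in the support of the symbol, the operator on the Fourier side is multiplication by the contraction $\mathsf{F}$ of a truncation of $\Theta_E(z)(\cdot)(-z)$, or one argues directly that $\mathsf{K}[t]$ is a compression of the contraction $\mathsf{K}$), it suffices to prove norm convergence on the dense set $C_c^\infty(\R)$ and invoke a uniform-boundedness/$3\varepsilon$ argument. Concretely, for $f$ supported in $(-\infty,t_0]$, both $\mathsf{K}[t]f$ and $\mathsf{K}f$ agree (the truncation at $t\geq t_0$ does not cut off the $y$-integration), so the limit is exact for such $f$; a general $f\in C_c^\infty(\R)$ is handled by translation, and then density finishes $(3)$.

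For $(3)\Rightarrow(2)$: if $\mathsf{K}[t]f$ converges in $L^2$ for every $f$, then by the uniform boundedness principle $\sup_t\|\mathsf{K}[t]\|_{\rm op}<\infty$, and the strong limit $\mathsf{K}$ is a bounded self-adjoint operator; upgrading to norm convergence uses that the $\mathsf{K}[t]$ form a monotone family of compressions — $\mathsf{K}[t]$ is the compression of $\mathsf{K}[t']$ (equivalently of $\mathsf{K}$) to $L^2(-\infty,t)$ for $t\leq t'$ — so $\mathsf{K}[t]$, $\mathsf{K}[t']$ and $\mathsf{K}$ differ only by pieces supported near the complement of $(-\infty,t)$, and Hilbert--Schmidt norm control of the tail of the kernel $K(x+y)$ on $\{x>t\}\cup\{y>t\}$ (finite by (K2), since $K$ has at most exponential growth and the relevant region recedes to $+\infty$) forces $\|\mathsf{K}-\mathsf{K}[t]\|_{\rm op}\to 0$. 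Alternatively, $(3)\Rightarrow(4)$ is immediate from uniform boundedness, and one can route through $(4)$.

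The implications $(2)\Rightarrow(4)$ is trivial, and $(4)\Rightarrow(1)$ is the substantive converse: assume $\|\mathsf{K}[t]\|_{\rm op}\leq M$ for all $t>0$. Using \eqref{s303}, for $f\in C_c^\infty(\R)$ with support in $(-\infty,t]$ we have $(\mathsf{F}\mathsf{K}[t]f)(z)=(\mathsf{F}\mathsf{K}f)(z)=\Theta_E(z)(\mathsf{F}f)(-z)$ for $z$ with $\Im(z)>c$, hence on all of $\C_+$ by analytic continuation. Thus the linear map $g\mapsto \Theta_E(z)g(-z)$, which sends $\mathsf{F}(C_c^\infty\cap L^2(-\infty,t))$ — a dense subspace of $\mathsf{F}L^2(-\infty,t)$ — into $L^2(\R)$ with norm $\leq 2\pi M$ (up to the normalization constant of $\mathsf{F}$), extends to a bounded operator; letting $t\to\infty$, $\mathsf{F}L^2(-\infty,t)$ exhausts $L^2(\R)$, so multiplication-by-$\Theta_E$ composed with reflection is bounded on $L^2(\R)$, hence $\Theta_E\in L^\infty(\R)$ and in fact $M_{\Theta_E}$ maps $H^2$ to $H^2$ boundedly (reflection swaps $H^2$ and $\bar H^2$, and the image lands in the right half thanks to the support condition together with Lemma \ref{lem_3_2}'s last sentence). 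By Lemma \ref{lem_190628_1}, $\Theta_E\in H^\infty$; since $|\Theta_E(u)|=1$ a.e.\ on $\R$ by \eqref{105}, $\Theta_E$ is inner in $\C_+$. I expect $(1)\Rightarrow(3)$, specifically the passage from pointwise to $L^2$-norm convergence with uniform control of the operator norms, to be the main obstacle; everything else is soft functional analysis once the Fourier-multiplier identity \eqref{s303} and the compression structure of the family $\{\mathsf{K}[t]\}$ are in hand.
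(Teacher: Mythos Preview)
Your argument for $(3)\Rightarrow(2)$ contains a concrete error, and more seriously the implication itself cannot hold as stated. The ``Hilbert--Schmidt tail control'' you invoke goes the wrong way: by (K2) one has $|K(x)|\ll e^{c|x|}$, so $\iint_{\{x>t\}\cup\{y>t\}}|K(x+y)|^2\,dx\,dy$ \emph{diverges} as $t\to\infty$; the limit operator $\mathsf{K}$ is not Hilbert--Schmidt (nor even compact). Worse, each $\mathsf{K}[t]$ \emph{is} compact --- its kernel is continuous and supported on the bounded triangle $\{|x|\le t,\ |y|\le t,\ x+y\ge 0\}$ --- so any operator-norm limit would be compact, whereas under (1) the strong limit $\mathsf{K}$ is a self-adjoint isometry with $\mathsf{K}^2=\mathrm{id}$ (Lemma~\ref{lem_3_2}), which is never compact on $L^2(\R)$. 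Hence $(3)\Rightarrow(2)$ fails. The paper's own proof shares this lacuna: it asserts that $(3)\Rightarrow(2)$ ``is a consequence of the Banach--Steinhaus theorem'', but Banach--Steinhaus gives only uniform boundedness, i.e.\ $(3)\Rightarrow(4)$. What the paper's arguments actually establish is the equivalence of (1), (3), (4) --- via $(1)\Rightarrow(3)$, a direct Cauchy estimate for $(4)\Rightarrow(3)$, and a Fourier/$H^\infty$ argument for $(2)\Rightarrow(1)$ that works verbatim as $(3)\Rightarrow(1)$ --- together with the trivial $(2)\Rightarrow(4)$; item (2) does not receive a valid converse.

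There is also a smaller slip in your $(1)\Rightarrow(3)$: for $f$ supported in $(-\infty,t_0]$ and $t\ge t_0$ one has $\mathsf{K}[t]f=\mathsf{P}_t\mathsf{K}f$, not $\mathsf{K}f$, since the truncation in the $x$-variable remains. This is easy to repair --- the paper writes $\mathsf{K}f-\mathsf{K}[t]f=\mathsf{K}(1-\mathsf{P}_t)f+(1-\mathsf{P}_t)\mathsf{K}\mathsf{P}_tf\to 0$ using only that $\mathsf{K}$ is bounded --- and your $(4)\Rightarrow(1)$ sketch is close in spirit to the paper's $(2)\Rightarrow(1)$ argument (multiplication by $\Theta_E$ maps $H^2$ to $H^2$, hence $\Theta_E\in H^\infty$ by Lemma~\ref{lem_190628_1}, hence inner by \eqref{105}), though the paper reaches (1) from (4) via the intermediate $(4)\Rightarrow(3)$ rather than directly.
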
 
\begin{proof} 
We write $\Theta_E$ as $\Theta$ for simplicity. 
We prove the implication (2)$\Rightarrow $(1). Suppose that $\mathsf{K}=\lim_{t\to\infty}\mathsf{K}[t]$ exists with respect to the operator norm. 
Then $\mathsf{K}f \in L^2(\R)$ for any $f \in L^2(\R)$, where we understand as  
\[
\mathsf{K}f(x) 
= \int_{-\infty}^{\infty}K(x+y)f(y) \, dy
:= \underset{t \to \infty}{\rm l.i.m}~\mathbf{1}_{(-\infty,t)}(x) \int_{-\infty}^{t}K(x+y)f(y) \, dy, 
\] 
where ${\rm l.i.m}$  stands for limit in mean. 
We have $\mathsf{K}[t]=\mathsf{P}_t\mathsf{K}\mathsf{P}_t$. 
Let $f \in L^2(-\infty,0)$ and take $\{f_n\}_n \subset C_c^\infty(\R)$ such that $\lim_{n\to\infty}f_n=f$ in $L^2(\R)$. 
Then, 
\[
(\mathsf{F}\mathsf{K}f)(z) 
= \lim_{n \to \infty}(\mathsf{F}\mathsf{K}f_n)(z) 
= \lim_{n \to \infty}\lim_{t \to \infty} (\mathsf{F}\mathsf{K}[t]f_n)(z), 
\]
since $\mathsf{K}$ and $\mathsf{F}$ are bounded. On the right-hand side, 
\[
\lim_{t \to \infty} (\mathsf{F}\mathsf{K}[t]f_n)(z)
= \Theta(z) (\mathsf{F}f_n)(-z)
\]
for $\Im(z)>c$ by Lemma \ref{lem_3_2}. Therefore, 
$(\mathsf{F}\mathsf{K}f)(z) = \Theta(z) (\mathsf{F}f)(-z)$ for $\Im(z)>c$. 
On the other hand, $\mathsf{K}f \in L^2(0,\infty)$ by (K3). 
Thus $(\mathsf{F}\mathsf{K}f)(z)$ defines an analytic function in $\C_+$. 
Hence $(\mathsf{F}\mathsf{K}f)(z)=\Theta(z) (\mathsf{F}f)(-z)$ 
which conclude $\Theta H^2 \subset H^2$. 
Hence $\Theta \in H^\infty$ by Lemma \ref{lem_190628_1}. 
Therefore, by \eqref{105}, 
$\Theta$ is an inner function in $\C_+$. 

We prove the implication (1)$\Rightarrow $(3). 
If $\Theta$ is an inner function in $\C_+$, for any $f \in L^2(\R)$, 
$\mathsf{L}f:=\mathsf{F}^{-1}M_\Theta\mathsf{F}\mathsf{J}f$ is defined and 
belongs to $L^2(\R)$, 
$\mathsf{L}f(x)=\int K(x+y)f(y)\,dy:={\rm l.i.m}_{T \to \infty} \int_{-T}^{T}K(x+y)f(y) \, dy$,   
and $\Vert \mathsf{L}f\Vert= \Vert f \Vert$ by the argument similar 
to the proof of \cite[Lemma 2.2]{Su19_1}, 
where $\mathsf{J}f(x)=f(-x)$. 
We have $\mathsf{K}[t]=\mathsf{P}_t\mathsf{L}\mathsf{P}_t$. Therefore, 
\[
\mathsf{L}f - \mathsf{K}[t]f = \mathsf{L}(1-\mathsf{P}_t)f + (1-\mathsf{P}_t)\mathsf{L}\mathsf{P}_tf 
\to 0 \quad (t\to\infty). 
\]
Hence $\mathsf{K}[t]f$ converges to $\mathsf{L}f$, and $\mathsf{L}=\mathsf{K}$.  

The implication (3)$\Rightarrow $(2) is a consequence of the Banach--Steinhaus theorem. 
The implication (2)$\Rightarrow$(4) is trivial. 
Finally, we show that (4) implies (3). 
Let $t>s>0$ and $f \in L^2(\R)$, 
and let $\mathsf{P}_t$ be the projection from $L^2(\R)$ to $L^2(-\infty,t)$. 
Then, 
$\mathsf{K}[t]f-\mathsf{K}[s]f 
= \mathsf{P}_t\mathsf{K}(\mathsf{P}_t - \mathsf{P}_s)f 
- (\mathsf{P}_t - \mathsf{P}_s)\mathsf{K}(\mathsf{P}_t-\mathsf{P}_s) f
+ (\mathsf{P}_t - \mathsf{P}_s)\mathsf{K}\mathsf{P}_t f
$,  
and thus 
\[
\aligned 
\Vert \mathsf{K}[t]f-\mathsf{K}[s]f \Vert 
& \leq \Vert \mathsf{K}[t](\mathsf{P}_t - \mathsf{P}_s)f \Vert 
+  \Vert (\mathsf{P}_t - \mathsf{P}_s)\mathsf{K}[t](\mathsf{P}_t-\mathsf{P}_s) f \Vert
+  \Vert (\mathsf{P}_t - \mathsf{P}_s)\mathsf{K}\mathsf{P}_t f \Vert \\
& \leq 2M \Vert (\mathsf{P}_t - \mathsf{P}_s )f \Vert +  \Vert (\mathsf{P}_t - \mathsf{P}_s)\mathsf{K}\mathsf{P}_t f \Vert. 
\endaligned 
\]
The first term of the right-hand side is smaller as $t>s>0$ are larger. 
We show that the second term is also smaller as $t>s>0$ are larger by contradiction.  
Suppose that there exists $a>0$ such that 
\[
a \leq \Vert (\mathsf{P}_t - \mathsf{P}_s)\mathsf{K}\mathsf{P}_t f \Vert^2 
= \int_{s}^{t} \left| \int_{-\infty}^{t}K(x+y)f(y) \,dy \right|^2 dx 
\]
holds for some $t>s>s_0$ for arbitrary $s_0$. 
Then we can take a strictly increasing numbers $s_0<s_1<\dots$ such that 
$ a \leq \int_{s_n}^{s_{n+1}} \left| \int_{-\infty}^{s_{n+1}}K(x+y)f(y) \,dy \right|^2 dx$ 
holds. For such numbers, 
\[
\aligned 
a 
&\leq \int_{s_n}^{s_{n+1}} \left| \int_{-\infty}^{s_{n+1}}K(x+y)f(y) \,dy \right|^2 dx  \\
&\leq \int_{s_n}^{s_{n+1}} \left| \int_{-\infty}^{s_0}K(x+y)f(y) \,dy \right|^2 dx 
+ \int_{s_n}^{s_{n+1}} \left| \int_{s_0}^{s_{n+1}}K(x+y)f(y) \,dy \right|^2 dx \\
&\leq \int_{s_n}^{s_{n+1}} \left| \int_{-\infty}^{s_0}K(x+y)f(y) \,dy \right|^2 dx 
+ \Vert (\mathsf{P}_{s_{n+1}}-\mathsf{P}_{s_n})\mathsf{K}[s_{n+1}](\mathsf{P}_{s_{n+1}}-\mathsf{P}_{s_0})f \Vert^2 \\
&\leq \int_{s_n}^{s_{n+1}} \left| \int_{-\infty}^{s_0}K(x+y)f(y) \,dy \right|^2 dx 
+ M \Vert (\mathsf{P}_{s_{n+1}}-\mathsf{P}_{s_0})f \Vert^2.   
\endaligned 
\]
The second term of the right-hand side is small for large $s_0$. 
Therefore, by replacing $a$ if necessary, 
\[
a \leq \int_{s_n}^{s_{n+1}} \left| \int_{-\infty}^{s_0}K(x+y)f(y) \,dy \right|^2 dx 
\quad (n=0,1,2,\dots). 
\]
This implies $\Vert \mathsf{K}[s_{n+1}](P_{s_0}f) \Vert \geq \Vert \mathsf{K}[s_1](P_{s_0}f) \Vert +na$, 
which contradicts the uniform boundedness (4).  
%
\comment{
Suppose that $\Theta$ is inner. 
Then, for any $f \in C_c^\infty(\R)$, $\int_{-\infty}^{t}K(x+y)f(y) \, dy$ is defined and 
\[
\mathsf{F}\left(\int_{-\infty}^{t}K(x+y)f(y) \, dy \right)(z)
=\Theta(z) \mathsf{F}f(-z)
\]
if $\Im(z)>c$ and $t>0$ is large. Therefore, 
\[
\int_{-\infty}^{t}K(x+y)f(y) \, dy = \frac{1}{2\pi}\int_{(c)} \Theta(z) \mathsf{F}f(-z) e^{-izx} \, dz
\]
On the the right-hand side, we can move the path of integration to the real line. 
Hence $\int_{-\infty}^{t}K(x+y)f(y) \, dy$ belongs to $L^2(\R)$, 
because the integrand of the right-hand side is $L^2$ on the real line.
Moreover, $\Vert \mathsf{K}f \Vert_{L^2} = (2\pi)\Vert \Theta \mathsf{F}f \Vert_{L^2}= (2\pi)\Vert \mathsf{F}f \Vert_{L^2} = \Vert f \Vert_{L^2}$. 
}
\end{proof}

\begin{proposition} \label{prop_190626_1} We can arrange the eigenvalues of the family $\{\mathsf{K}[t]\}_{t}$ 
such that 
$\lambda_1^+(t) \geq \lambda_2^+(t) \geq \dots$ are positive eigenvalues, 
$\lambda_1^-(t) \leq \lambda_2^-(t) \leq \dots$ are negative eigenvalues, 
where eigenvalues are repeated as many times as multiplicities, 
and each $|\lambda_i^\pm(t)|$ is a continuous nondecreasing function of $t$. 
\end{proposition}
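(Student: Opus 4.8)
The plan is to realise every operator $\mathsf{K}[t]$ on the single fixed Hilbert space $L^2(\R)$ via extension by zero, to note that for $s\le t$ the operator $\mathsf{K}[s]$ is then the compression of $\mathsf{K}[t]$ to a closed subspace, and to finish with the Courant--Fischer min-max formulas, which turn the compression into the monotonicity of $|\lambda_n^\pm(t)|$ and turn the norm-continuity of $t\mapsto\mathsf{K}[t]$ into their continuity.

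First I would introduce $T_t$, the operator on $L^2(\R)$ with integral kernel $\mathbf{1}_{(-\infty,t)}(x)\,\mathbf{1}_{(-\infty,t)}(y)\,K(x+y)$; with respect to the decomposition $L^2(\R)=L^2(-\infty,t)\oplus L^2(t,\infty)$ it is the direct sum of $\mathsf{K}[t]$ and the zero operator, so it has exactly the same nonzero eigenvalues with the same multiplicities. By (K3) the kernel vanishes unless $x+y\ge0$, which together with $x<t$ and $y<t$ forces $(x,y)\in(-t,t)^2$; on that bounded set $K$ is bounded and continuous, so the kernel lies in $L^2(\R^2)$, and since $K$ is real-valued by (K2) the operator $T_t$ is self-adjoint and Hilbert--Schmidt on $L^2(\R)$. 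Writing $\mathsf{P}_t$ for the orthogonal projection of $L^2(\R)$ onto $L^2(-\infty,t)$, a comparison of kernels gives $T_s=\mathsf{P}_s T_t\mathsf{P}_s$ for every $s\le t$, that is, $T_s$ is the compression of $T_t$ to $\operatorname{ran}\mathsf{P}_s$.

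For a compact self-adjoint operator $A$ on $L^2(\R)$ set, for $n\ge1$, $c_n(A)=\max_{\dim V=n}\min_{0\ne v\in V}\langle Av,v\rangle/\Vert v\Vert^2$ and $d_n(A)=\min_{\dim V=n}\max_{0\ne v\in V}\langle Av,v\rangle/\Vert v\Vert^2$, and put $\lambda_n^+(A)=\max(c_n(A),0)$, $\lambda_n^-(A)=\min(d_n(A),0)$. By the min-max theorem, whenever $\lambda_n^+(A)>0$ it equals the $n$-th largest positive eigenvalue of $A$ counted with multiplicity, and likewise $\lambda_n^-(A)$, when negative, is the $n$-th smallest negative eigenvalue; hence $\lambda_n^\pm(t):=\lambda_n^\pm(T_t)$ realises the claimed arrangement of the eigenvalues of the family $\{\mathsf{K}[t]\}_t$, under the harmless convention that $\lambda_n^\pm(t)=0$ whenever $\mathsf{K}[t]$ has fewer than $n$ eigenvalues of the given sign. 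Because every $n$-dimensional subspace of $\operatorname{ran}\mathsf{P}_s$ is an $n$-dimensional subspace of $L^2(\R)$, the identity $T_s=\mathsf{P}_s T_t\mathsf{P}_s$ gives $c_n(T_s)\le c_n(T_t)$ and $d_n(T_s)\ge d_n(T_t)$, hence $|\lambda_n^\pm(s)|\le|\lambda_n^\pm(t)|$ whenever $s\le t$, which is the monotonicity. For the continuity, $\Vert T_t-T_{t_0}\Vert_{\rm op}\le\Vert T_t-T_{t_0}\Vert_{\rm HS}\to0$ as $t\to t_0$ by dominated convergence applied to the kernels (their difference tends to $0$ almost everywhere and is dominated, for $t$ close to $t_0$, by $\mathbf{1}_{(-\infty,t_0+1)}(x)\,\mathbf{1}_{(-\infty,t_0+1)}(y)\,|K(x+y)|\in L^2(\R^2)$); since $c_n$ and $d_n$ are $1$-Lipschitz in the operator norm and $\max(\cdot,0)$, $\min(\cdot,0)$ are $1$-Lipschitz, each $\lambda_n^\pm(t)$, and therefore each $|\lambda_n^\pm(t)|$, is continuous---indeed locally Lipschitz---in $t$.

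The only real observation is that, after extension by zero, $\mathsf{K}[s]$ is the compression of $\mathsf{K}[t]$ for $s\le t$; once this is in place everything is routine min-max calculus. The points that need a little care, rather than presenting genuine obstacles, are the use of (K3) to see that the kernel of $T_t$ has bounded support (so that $T_t$ is Hilbert--Schmidt on the fixed space $L^2(\R)$) and the bookkeeping of the zero-padding convention used to label the eigenvalue functions uniformly for all $t\in\R$.
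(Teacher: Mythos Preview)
Your proof is correct and follows essentially the same approach as the paper: both use the Courant--Fischer min--max principle together with the inclusion $L^2(-\infty,s)\subset L^2(-\infty,t)$ to obtain monotonicity, and both reduce continuity to a Hilbert--Schmidt estimate on the kernel. The only visible difference is in the packaging of the continuity step: the paper combines the identity $\sum_j\lambda_j^\pm(t)^2=\Vert\mathsf{K}[t]\Vert_{\rm HS}^2$ with the already-established monotonicity (each summand in the difference is nonnegative, hence individually small), whereas you invoke the Lipschitz dependence of min--max values on the operator norm and then bound $\Vert T_t-T_{t_0}\Vert_{\rm op}$ by $\Vert T_t-T_{t_0}\Vert_{\rm HS}$. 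Both arguments are standard and equally short.
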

\begin{proof}
We find that $\lambda_j^+(t)$ is a nondecreasing function of $t$ 
by the min--max principle for positive eigenvalues 
$\displaystyle{\lambda_j^+(t) = \min_{V_j}\max_{f \in V_j^\perp}\{ \langle \mathsf{K}[t]f,f\rangle/\langle f,f\rangle\}}$, 
where $V_j$ runs all $(j-1)$-dimensional subspace of $L^2(-\infty,t)$ 
and $V_j^\perp$ stands for the orthogonal complement of $V_j$, 
because the maximum part of the formula is a nondecreasing function of $t$ 
by the inclusion $L^2(-\infty,s)\subset L^2(-\infty,t)$ for $s<t$ 
obtained by the extension by zero. 
Also, $\lambda_j^-(t)$ is a nonincreasing function of $t$ 
by applying the above argument to $-\mathsf{K}[t]$. 
Since $\mathsf{K}[t]$ is a Hilbert--Schmidt operator, 
$\sum_{j}\lambda_j^+(t)^2 + \sum_{j}\lambda_j^-(t)^2
 = \int_{-t}^{t}\int_{-t}^{t}|K(x+y)|^2dxdy$. 
 Therefore, for sufficiently small $h>0$, 
\[
\aligned 
\sum_{j}&(\lambda_j^+(t+h)^2 - \lambda_j^+(t)^2) + \sum_{j}(\lambda_j^-(t+h)^2 - \lambda_j^-(t)^2) \\
&= 
\left(\int_{t}^{t+h}\int_{-t-h}^{t+h} 
+ \int_{-t-h}^{-t}\int_{-t-h}^{t+h}
+ \int_{t}^{t+h}\int_{-t}^{t}
+ \int_{-t-h}^{-t}\int_{-t}^{t}
\right) |K(x+y)|^2dxdy \\
& \leq 4h \int_{-3t}^{3t}|K(x)|^2 \,dx, 
\endaligned 
\]
and hence 
$\lambda_j^\pm(t+h)^2 - \lambda_j^\pm(t)^2  \leq 4h \int_{-3t}^{3t}|K(x)|^2 \,dx$. 
This implies that each $\lambda_j^\pm(t)$ is right continuous. 
The left continuity of each $\lambda_j^\pm(t)$ is proved by the same argument.
\end{proof}
\medskip

\noindent
{\bf Proof of Theorems \ref{thm_2} and \ref{thm_3}.} 
Recall that the operator norm of a compact self-adjoint operator 
is equal to the maximum of absolute values of eigenvalues. 
Then, condition (K5) with $\tau=\infty$ implies that 
$\Vert \mathsf{K}[t] \Vert_{\rm op} <1$ 
by Proposition \ref{prop_190626_1}, 
Hence $\Theta_E=E^\sharp/E$ 
is an inner function in $\C_+$ by Proposition \ref{prop_190626_2}, and Theorem \ref{thm_2} is proved. 
If we suppose that $E$ satisfies (K1)$\sim$(K3) and (K6) and that $\Theta_E$ is an inner function in $\C_+$, 
both $\pm 1$ are not eigenvalues of $\mathsf{K}[t]$ for every $t>0$ 
as proved in \cite[Theorem 5.2]{Su19_1}.  
Then, $\Vert \mathsf{K}[t] \Vert_{\rm op} <1$ by Proposition \ref{prop_190626_1}. 
Hence $\Theta_E$ is an inner function in $\C_+$ by Proposition \ref{prop_190626_2}, 
and Theorem \ref{thm_3} is proved. 
\hfill $\Box$

%
%
\section{Theory of model subspaces} \label{section_5}
%
%

In this section, 
we review basic notions and properties of model subspaces and de Branges spaces 
in preparation for the next section 
according to Havin--Mashreghi~\cite{MR2016246, MR2016247}, 
Baranov~\cite{MR1855436} for model subspaces 
and de Branges \cite{MR0229011}, Romanov \cite{Rom14}, Winkler \cite{Win14}, Woracek \cite{Wo} for de Branges spaces.

%
%
\subsection{Model subspaces and de Branges spaces} 
%
%

For an inner function $\theta$, 
a {\it model subspace} (or coinvariant subspace) $\mathcal{K}(\theta)$ is defined by the orthogonal complement $\mathcal{K}(\theta)=H^2 \ominus \theta H^2$,
where $\theta H^2 = \{ \theta(z)F(z) \, |\, F \in H^2\}$. 
It has the alternative representation 
$\mathcal{K}(\theta) = H^2 \cap \theta \bar{H}^2$.
A model subspace $\mathcal{K}(\theta)$ is a reproducing kernel Hilbert space 
with respect to the norm induced from $H^2$. 
The reproducing kernel of $\mathcal{K}(\theta)$ is 
\[
j(z,w) = \frac{1-\overline{\theta(z)}\theta(w)}{2\pi i(\bar{z}-w)}, 
\]
that is, $\langle f, j(z,\cdot) \rangle_{H^2} = f(z)$ for $f \in \mathcal{K}(\theta)$ and $z \in \C_+$. 
For $E \in \overline{\mathbb{HB}}$, the set 
\begin{equation*}
\mathcal{H}(E) := \{ f ~|~ \text{$f$ is entire, $f/E$ and $f^\sharp/E \in  H^2$} \}
\end{equation*}
forms a Hilbert space under the norm $\Vert f \Vert_{\mathcal{H}(E)} := \Vert f/E \Vert_{H^2}$. 
The Hilbert space $\mathcal{H}(E)$ is called the {\it de Branges space} generated by $E$. 
The de Branges space $\mathcal{H}(E)$ is a reproducing kernel Hilbert space 
consisting of entire functions endowed with the reproducing kernel 
\begin{equation*} 
J(z,w) = \frac{\overline{E(z)}E(w) - \overline{E^\sharp(z)}E^\sharp(w)}{2\pi i(\bar{z} - w)}.  
\end{equation*}
The reproducing formula $\langle f, J(z,\cdot)\rangle_{\mathcal{H}(E)}=f(z)$ for $f \in \mathcal{H}(E)$ and $z \in \C_+$ 
remains true for $z \in  \R$ if $\theta=E^\sharp/E$ is analytic in a neighborhood of $z$. 
\medskip

If an inner function $\theta$ in $\C_+$ is meromorphic in $\C$, 
it is called a {\it meromorphic inner function}. 
It is known that every meromorphic inner function is expressed as 
$\theta = E^\sharp/E$ by using some $E \in \mathbb{HB}$. 
If $\theta$ is a meromorphic inner function such that $\theta=E^\sharp/E$, 
the model subspace $\mathcal{K}(\theta)$ is isomorphic and isometric to the de Branges space $\mathcal{H}(E)$ as a Hilbert space 
by $\mathcal{K}(\theta) \to \mathcal{H}(E): \, f(z) \mapsto E(z)f(z)$. 
\medskip

As developed in \cite{Burnol2002, Burnol2004, Burnol2007, Burnol2011}, 
the Hankel type operator $\mathsf{K}$ with the kernel $K(x+y)$ is useful 
to study model subspaces $\mathcal{K}(\theta)$ or de Branges spaces $\mathcal{H}(E)$ via Fourier analysis. 

%
%
\section{Proof of Theorem \ref{thm_4}} \label{section_6}
%
%

We suppose that $E$ satisfies (K1)$\sim$(K5) with $\tau=\infty$ throughout this section. 
Then $\Theta_E=E^\sharp/E$ is an inner function in $\C_+$ 
by Theorem \ref{thm_2} (thus $\mathcal{Z} \subset \R$) 
and $f \mapsto \mathsf{K}f$ defines an isometry of $L^2(\R)$ satisfying $\mathsf{K}^2={\rm id}$ 
by Lemma \ref{lem_3_2}. 
These properties are essential in the argument of the section. 
Let $\mathcal{V}_t$ be the Hilbert space of all functions $f$ 
such that both $f$ and $\mathsf{K}f$ are square-integrable functions having supports in $[t,\infty)$: 
\begin{equation*}
\mathcal{V}_t = L^2(t,\infty) \cap \mathsf{K}(L^2(t,\infty)). 
\end{equation*}
If $t \geq 0$, $\mathsf{F}(\mathcal{V}_t)$ is a closed subspace of $H^2$, because 
the Fourier transform provides an isometry of $L^2(\R)$ up to a constant 
such that $H^2 = {\mathsf F}L^2(0,\infty)$ and $\bar{H}^2 = {\mathsf F}L^2(-\infty,0)$ by the Paley-Wiener theorem, 
and $\mathcal{V}_t$ is a closed subspace of $L^2(0,\infty)$ by definition. 
Therefore, $\mathsf{F}(\mathcal{V}_t)$ is a reproducing kernel Hilbert space, 
since the point evaluation map $F \mapsto F(z)$ is continuous in $\mathsf{F}(\mathcal{V}_t)$ for each $z \in \C_+$ as well as $H^2$. 

The following Lemmas \ref{lem_6_1}, 
\ref{lem_6_2}, and \ref{lem_6_3} 
are proved in almost the same way as 
Lemmas 4.1, 4.2, and 4.4 in \cite{Su19_1}, respectively, 
because the difference of condition (K1) between this paper and \cite{Su19_1} 
does not affect the arguments of the proofs. 

\begin{lemma} \label{lem_6_1}
We have $\mathsf{F}(\mathcal{V}_0)=\mathcal{K}(\Theta_E)$. 
\end{lemma}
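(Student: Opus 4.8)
The plan is to show the two inclusions $\mathsf{F}(\mathcal{V}_0) \subseteq \mathcal{K}(\Theta_E)$ and $\mathcal{K}(\Theta_E) \subseteq \mathsf{F}(\mathcal{V}_0)$ separately, working on the Fourier side and using the description $\mathcal{K}(\Theta) = H^2 \cap \Theta\bar{H}^2$ recalled in Section \ref{section_5}. The key point connecting the two sides is the transform rule $(\mathsf{F}\mathsf{K}f)(z) = \Theta_E(z)(\mathsf{F}f)(-z)$, which holds on all of $\C_+ \cup (\R\setminus\mathcal{Z})$ when $f \in L^2(\R)$ is supported in a half-line $(-\infty,t]$ by Lemma \ref{lem_3_2}; note that for $f \in \mathcal{V}_0$ both $f$ and $\mathsf{K}f$ are supported in $[0,\infty)$, so $\mathsf{K}f$ is supported in the left half-line is the wrong statement, but we will instead apply the rule to $\mathsf{K}f$, whose support lies in $[0,\infty) \not\subseteq(-\infty,t]$ — so the correct device is the reflection operator $\mathsf{J}f(x) = f(-x)$, under which $\mathsf{F}(\mathsf{J}f)(z) = \mathsf{F}f(-z)$, together with the relation $\mathsf{K}\mathsf{J}$ appearing in the proof of Proposition \ref{prop_190626_2}.

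Concretely, for the forward inclusion I would take $f \in \mathcal{V}_0$, so $F := \mathsf{F}f \in H^2$ since $\operatorname{supp} f \subseteq [0,\infty)$ (Paley--Wiener), and set $g := \mathsf{K}f$, which also has support in $[0,\infty)$, hence $G := \mathsf{F}g \in H^2$. Using $\mathsf{K}^2 = \mathrm{id}$ we have $f = \mathsf{K}g$; since $g$ is supported in $[0,\infty)$, the function $\mathsf{J}g$ is supported in $(-\infty,0]$, so Lemma \ref{lem_3_2} (last sentence) applies to $\mathsf{J}g$ and gives, after unwinding $\mathsf{K}g = \mathsf{K}\mathsf{J}(\mathsf{J}g)$ and the fact that $\mathsf{K}$ acts as $\mathsf{F}^{-1}M_{\Theta_E}\mathsf{F}\mathsf{J}$, the identity $F(z) = \Theta_E(z)\,\overline{G^\sharp}$-type relation; more cleanly, $F = \mathsf{F}(\mathsf{K}g)$ and the transform rule yields $F(z) = \Theta_E(z)\,(\mathsf{F}g)(-z) = \Theta_E(z)G(-z)$. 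Since $G \in H^2$, the function $z \mapsto G(-z)$ lies in $\bar H^2$, so $F \in H^2 \cap \Theta_E \bar H^2 = \mathcal{K}(\Theta_E)$. For the reverse inclusion, given $F \in \mathcal{K}(\Theta_E) = H^2 \cap \Theta_E\bar H^2$, write $F = \Theta_E \bar H$ with $H \in H^2$; then $f := \mathsf{F}^{-1}F \in L^2(0,\infty)$, and one checks $\mathsf{K}f$ corresponds under $\mathsf{F}$ to $\Theta_E(z)F(-z)$, which by the factorization equals $\Theta_E(z)\Theta_E(-z)\overline{H}(-z) = \overline{H}(-z)$ using $\Theta_E(z)\Theta_E(-z) = 1$ from \eqref{105}; since $\overline{H}(-\cdot)$ is the boundary value of a function in $H^2$ (as $H \in H^2$), $\mathsf{K}f \in \mathsf{F}^{-1}H^2 = L^2(0,\infty)$, so $f \in \mathcal{V}_0$ and $F = \mathsf{F}f \in \mathsf{F}(\mathcal{V}_0)$.

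The main obstacle I anticipate is bookkeeping the interplay of the operators $\mathsf{K}$, $\mathsf{J}$, $M_{\Theta_E}$ and the Fourier transform carefully enough to be sure the transform rule of Lemma \ref{lem_3_2} is being invoked with a function supported on the correct half-line — the rule in its strongest form needs the \emph{input} supported in some $(-\infty,t]$, and one must route through $\mathsf{K}^2 = \mathrm{id}$ and reflections to arrange this. A secondary technical point is the role of the exceptional set $\mathcal{Z}$: since $\mathcal{Z}\subset\R$ under the standing hypothesis $\tau=\infty$ (Theorem \ref{thm_2}), the identity $F(z) = \Theta_E(z)G(-z)$ is first established on $\C_+$ where $\Theta_E$ is genuinely analytic and bounded, and the membership statements $F\in\bar H^2$ etc. are about $H^2/\bar H^2$ as subspaces of $L^2(\R)$, so boundary values are taken almost everywhere and the null set $\mathcal{Z}\cap\R$ is harmless. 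Since the excerpt states this is proved ``in almost the same way as Lemma 4.1 in \cite{Su19_1}'' and the only change is the more general (K1), I expect the write-up to mostly cite that argument, verifying only that the generalized (K1) — allowing essential singularities in $\mathcal{Z}$ — does not enter, which it does not because under $\tau=\infty$ we are back in the case $\mathcal{Z}\subset\R$ and $\Theta_E$ inner.
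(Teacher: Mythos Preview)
Your approach is correct and essentially identical to the paper's (which simply defers to \cite[Lemma~4.1]{Su19_1}): prove both inclusions using the transform rule \eqref{s303} together with $\mathcal{K}(\Theta_E)=H^2\cap\Theta_E\bar H^2$ and the functional equation \eqref{105}. Your anticipated obstacle about arranging left-half-line support for the input of the transform rule is a non-issue: under the standing hypotheses of Section~\ref{section_6} ($\Theta_E$ inner), the middle clause of Lemma~\ref{lem_3_2} already gives \eqref{s303} for \emph{every} $f\in L^2(\R)$ almost everywhere on $\R\setminus\mathcal{Z}$, which suffices since membership in $H^2$ and $\bar H^2$ is determined by boundary values on $\R$; the detour via $\mathsf{J}$ and $\mathsf{K}^2=\mathrm{id}$ can simply be dropped (indeed, in your ``clean'' step you end up applying the rule to $g\in L^2(0,\infty)$ anyway, which is justified only by this middle clause).
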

%
\comment{
\begin{proof}
If $f \in \mathcal{V}_0$, $\mathsf{F}f$ and $\mathsf{F}\mathsf{K}f$ belong to the Hardy space $H^2$.  
On the other hand, we have 
$(\mathsf{F}\mathsf{K}f)(z)=\Theta(z)(\mathsf{F}f)(-z)$ 
by Lemma \ref{lem_3_2}.  
This implies $(\mathsf{F}f)(z)=\Theta(z)(\mathsf{F}\mathsf{K}f)(-z)$ by \eqref{105}.  
Therefore, $\mathsf{F}f$ belongs to $\mathcal{K}(\Theta)$ by \eqref{s502}. 
Conversely, if $F \in \mathcal{K}(\Theta)$, there exists $f \in L^2(0,\infty)$ and $g \in L^2(-\infty,0)$ such that $F(z)=(\mathsf{F}f)(z) = \Theta(z) (\mathsf{F}g)(z)$. 
We have $(\mathsf{F}g)(-z) = \Theta(z)(\mathsf{F}f)(-z)$ by \eqref{105} again. 
Here $(\mathsf{F}g)(-z)=(\mathsf{F}g^-)(z)$ for $g^-(x) = g(-x) \in L^2(0,\infty)$, 
and $\Theta(z)(\mathsf{F}f)(-z)=(\mathsf{F}\mathsf{K}f)(z)$ as above. 
Hence $\mathsf{K}f$ belongs to $L^2(0,\infty)$, and thus $f \in \mathcal{V}_0$. 
\end{proof}
} 
%

%
\begin{lemma} \label{lem_6_2} 
We have $\mathcal{V}_t\not=\{0\}$ for every $t \in \R$. 
\end{lemma}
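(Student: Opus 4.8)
The plan is to show $\mathcal{V}_t \neq \{0\}$ by exhibiting an explicit nonzero element built from the solutions of the integral equations studied in Section~\ref{section_2}. Recall that $\mathcal{V}_t = L^2(t,\infty) \cap \mathsf{K}(L^2(t,\infty))$, and $\mathsf{K}$ is an isometric involution on $L^2(\R)$ under the standing assumptions of this section. The natural candidate is a compactly supported function on $[t,\infty)$ whose image under $\mathsf{K}$ also has support contained in $[t,\infty)$. Since $\mathsf{K}$ has kernel $K(x+y)$ and $K$ vanishes on $(-\infty,0)$ by (K3), for $f$ supported in $[t,\infty)$ we automatically have $(\mathsf{K}f)(x) = \int_{t}^{\infty} K(x+y)f(y)\,dy$, and this vanishes for $x < -t$; so the real content is to force $\mathsf{K}f$ to vanish on $[-t,t)$ as well (when $t>0$; for $t \leq 0$ there is nothing to arrange beyond support in $[t,\infty)$, and indeed one can use, e.g., the function associated with $\Psi$ below whose support is handled directly).

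First I would treat the case $t > 0$. The key observation is that the operator $\mathsf{K}[t] = \mathsf{P}_t \mathsf{K}\mathsf{P}_t$ acting on $L^2(-\infty,t)$ differs from $\mathsf{K}$ restricted to functions supported in $[-t,t]$ only by the projection $\mathsf{P}_t$ that kills the part of $\mathsf{K}f$ on $[t,\infty)$. I would use $\Psi(t,x)$ from \eqref{s304_2}: set $g(x) := \mathbf{1}_{[-t,t]}(x)\,\Psi(t,x)$, so that $g \in L^2(-\infty,t)$ and $(1-\mathsf{K}[t])g = \mathbf{1}_{[-t,t]}$ — equivalently $\mathsf{K}[t]g = g - \mathbf{1}_{[-t,t]}$. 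Then $\mathsf{K}g$, computed without the outer projection, equals $(\mathsf{K}[t]g)$ on $[-t,t)$ plus a tail on $[t,\infty)$; more precisely $(\mathsf{K}g)(x) = g(x) - \mathbf{1}_{[-t,t]}(x) + h(x)$ where $h$ is supported in $[t,\infty)$. I would then look at $f := g - \mathbf{1}_{[-t,t]} + \mathsf{K}(\text{something supported in }[t,\infty))$ and use $\mathsf{K}^2 = \mathrm{id}$ to reorganize: applying $\mathsf{K}$ to the function $\tilde g := \mathsf{K}g - (g - \mathbf{1}_{[-t,t]})$, which by construction is supported in $[t,\infty)$, gives $\mathsf{K}\tilde g = g - \mathsf{K}(g-\mathbf{1}_{[-t,t]})$; one checks this is supported in $[t,\infty)$ precisely because $\mathsf{K}[t]g = \mathsf{P}_t\mathsf{K}g$ reproduces $g-\mathbf{1}_{[-t,t]}$ on $(-\infty,t)$. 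Hence $\tilde g \in \mathcal{V}_t$ provided $\tilde g \neq 0$; and $\tilde g = 0$ would force $\mathsf{K}g = g - \mathbf{1}_{[-t,t]}$ globally, in particular on $(-\infty,-t)$ it would force $0 = g - \mathbf{1}_{[-t,t]} = 0$ (no contradiction yet), but evaluating at $x \in [-t,t]$ and pairing with $\mathbf{1}_{[-t,t]}$ together with $\Psi(t,t)\neq 0$ from Proposition~\ref{prop_190620_1}(4) rules this out. This nondegeneracy check is where I expect the delicate bookkeeping to lie.

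For the case $t \leq 0$, the argument simplifies considerably. Here $\mathsf{K}[t] = 0$, and one can produce an element of $\mathcal{V}_t$ directly: since $\mathsf{K}$ maps $L^2(\R)$ isometrically onto itself with $\mathsf{K}^2=\mathrm{id}$, pick any nonzero $u \in L^2(t,\infty)$ and decompose $\mathsf{K}u = v_- + v_+$ with $v_- \in L^2(-\infty,t)$, $v_+ \in L^2(t,\infty)$; then $\mathsf{K}v_+ = \mathsf{K}(\mathsf{K}u - v_-) = u - \mathsf{K}v_-$, and one shows $\mathsf{K}v_- \in L^2(-\infty,t)$ by (K3) together with $v_-$ supported in $(-\infty,t)$ and $t \leq 0$, so in fact $v_+ \in \mathcal{V}_t$ unless $v_+ = 0$; and $v_+ = 0$ forces $\mathsf{K}u = v_-$ supported in $(-\infty,t) \subset (-\infty,0)$, contradicting (K3) which says $\mathsf{K}u$ is supported in $[0,\infty)$ (so $\mathsf{K}u$ vanishes a.e.\ on $(-\infty,0)$; combined with support in $(-\infty,t)$, $t\le 0$, this gives $\mathsf{K}u=0$, hence $u=0$ by isometry, a contradiction).

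The main obstacle, I expect, is the first case: ensuring that the candidate $\tilde g$ constructed from $\Psi(t,\cdot)$ is genuinely nonzero and that its $\mathsf{K}$-image really does land in $L^2(t,\infty)$, which requires carefully disentangling the action of $\mathsf{K}$ from that of the projected operator $\mathsf{K}[t]$ and using $\mathsf{K}^2=\mathrm{id}$ at exactly the right place. The cleanest route may well be the one in \cite[Lemma~4.2]{Su19_1}: rather than manipulating $\Psi$ by hand, show abstractly that $\mathsf{F}(\mathcal{V}_t) = \mathcal{K}(\Theta_E) \ominus (\text{a subspace of dimension equal to that of the span of reproducing kernels at the spectral points up to }t)$, or more simply that $\mathcal{V}_t \supsetneq \{0\}$ because $\mathsf{F}(\mathcal{V}_t)$ contains $\mathcal{K}(\theta_t)$ for the meromorphic inner function $\theta_t = e^{2itz}\Theta_E$ shifted appropriately — but since the excerpt explicitly invokes the parallel with \cite[Lemma~4.2]{Su19_1}, I would follow that template, transplanting its proof verbatim and only verifying that the generalized condition (K1) (allowing essential singularities in $\mathcal{Z}$, which are real here by Theorem~\ref{thm_2}) does not interfere — it does not, since all the Fourier-analytic manipulations take place on the real line and in $\C_+$ where $\Theta_E$ is bounded analytic.
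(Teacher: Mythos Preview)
Your explicit construction for $t>0$ does not work. You set $g=\mathbf{1}_{[-t,t]}\Psi(t,\cdot)$, so $(1-\mathsf{K}[t])g=\mathbf{1}_{[-t,t]}$, and then define $\tilde g=\mathsf{K}g-(g-\mathbf{1}_{[-t,t]})$, which is indeed supported in $[t,\infty)$. But your claim that $\mathsf{K}\tilde g$ is also supported there fails: since $g-\mathbf{1}_{[-t,t]}$ is supported in $(-\infty,t)$, one has $\mathsf{P}_t\mathsf{K}(g-\mathbf{1}_{[-t,t]})=\mathsf{K}[t](g-\mathbf{1}_{[-t,t]})=\mathsf{K}[t]g-\mathsf{K}[t]\mathbf{1}_{[-t,t]}=(g-\mathbf{1}_{[-t,t]})-\mathsf{K}[t]\mathbf{1}_{[-t,t]}$, and therefore
\[
\mathsf{P}_t\,\mathsf{K}\tilde g=\mathsf{P}_t\bigl(g-\mathsf{K}(g-\mathbf{1}_{[-t,t]})\bigr)=\mathbf{1}_{[-t,t]}+\mathsf{K}[t]\mathbf{1}_{[-t,t]}=(1+\mathsf{K}[t])\mathbf{1}_{[-t,t]},
\]
which is nonzero because $1+\mathsf{K}[t]$ is injective by (K5). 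So $\tilde g\notin\mathcal{V}_t$. The reason your heuristic ``$\mathsf{K}[t]g$ reproduces $g-\mathbf{1}_{[-t,t]}$'' breaks down is that it only controls $\mathsf{P}_t\mathsf{K}g$, not $\mathsf{P}_t\mathsf{K}(g-\mathbf{1}_{[-t,t]})$; the extra term $\mathsf{K}[t]\mathbf{1}_{[-t,t]}$ does not vanish. Your $t\leq 0$ argument also contains slips: (K3) gives $\mathsf{K}v_-\in L^2(-t,\infty)\subset L^2(t,\infty)$, not $L^2(-\infty,t)$ as you wrote, and (K3) does \emph{not} force $\mathsf{K}u$ to be supported in $[0,\infty)$ for a general $u\in L^2(t,\infty)$.

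The paper's proof (identical to \cite[Lemma~4.2]{Su19_1}) avoids any explicit element and instead works on the orthogonal complement. For $t<0$ one simply observes $L^2(t,-t)\subset\mathcal{V}_t$, since $\mathsf{K}(L^2(-\infty,t))\subset L^2(-t,\infty)$ by (K3). For $t>0$ one shows $\mathcal{V}_t^\perp=L^2(-\infty,t)+\mathsf{K}(L^2(-\infty,t))$ is closed (the decomposition $w=u+\mathsf{K}v$ can be inverted using $(1-\mathsf{K}[t]^2)^{-1}$, which exists by (K5)) and then \emph{proper}: every function in $\mathsf{K}(L^2(-\infty,t))$ is continuous on $(t,\infty)$ by the continuity of $K$, hence the sum cannot exhaust $L^2(\R)$. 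This properness-via-continuity step is the key idea you are missing; no single-function construction from $\Phi$ or $\Psi$ alone will land in $\mathcal{V}_t$.
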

%
\comment{
\begin{proof} The case of $t=0$ is Lemma \ref{lem_6_1}. 
The orthogonal complement 
$\mathcal{V}_t^\perp$ 
of $\mathcal{V}_t$ in $L^2(\R)$ 
is equal to the closure of $L^2(-\infty,t) + \mathsf{K}(L^2(-\infty,t))$. 
Therefore, if $t<0$, $\mathcal{V}_t$ contains $L^2(t,-t)$ and thus $\not=\{0\}$, 
since $\mathsf{K}(L^2(-\infty,t)) \subset L^2(-t,\infty)$ by (K3). 
We suppose that $t>0$ and show that $L^2(-\infty,t) + \mathsf{K}(L^2(-\infty,t))$ is closed. 
If $w=u+\mathsf{K}v$ for some $u,v \in L^2(-\infty,t)$, 
we have $\mathsf{P}_tw = u+\mathsf{K}[t] v$ and 
$\mathsf{P}_t\mathsf{K}w=\mathsf{K}[t]u+v$ by $\mathsf{K}^2={\rm id}$. 
It is solved as 
$u=(1-\mathsf{K}[t]^2)^{-1}(\mathsf{P}_t - \mathsf{K}[t]\mathsf{P}_t\mathsf{K})w$ 
and 
$v=(1-\mathsf{K}[t]^2)^{-1}(\mathsf{P}_t\mathsf{K} - \mathsf{K}[t]\mathsf{P}_t)w$, 
since both $\pm 1$ are not eigenvalues of $\mathsf{K}[t]$. 
Therefore, if $w_n=u_n+\mathsf{K}v_n$ converges in $L^2(\R)$, 
it implies that both $u_n$ and $v_n$ also converge in $L^2(\R)$, 
and hence the space $L^2(-\infty,t) + \mathsf{K}(L^2(-\infty,t))$ is closed. 
Hence we obtain 
\begin{equation} \label{s601}
\mathcal{V}_t^{\perp} = L^2(-\infty,t) + \mathsf{K}(L^2(-\infty,t)). 
\end{equation}
To prove $\mathcal{V}_t \not=\{0\}$, it is sufficient to show that 
$L^2(-\infty,t) + \mathsf{K}(L^2(-\infty,t))$ is a proper closed subspace of $L^2(\R)$. 
Suppose that $f \in \mathsf{K}(L^2(-\infty,t))$. 
Then the restriction of $f$ to $(t,\infty)$ is continuous on $(t,\infty)$. 
In fact, if $f(x):=(\mathsf{K}g)(x) = \int_{-x}^{t}K(x+y)g(y) \, dy$ 
for some $g \in L^2(-\infty,t)$, 
the continuity of $K$ implies the continuity of $f$ as   
\begin{equation*}
\scalebox{0.9}{$
\aligned
|f(x+\delta)-f(x)| 
& = \left| \int_{-x-\delta}^{t}K(x+\delta+y)g(y) \, dy - \int_{-x}^{t}K(x+y)g(y) \, dy \right| \\ 
& \leq \int_{-x-\delta}^{x}|K(x+\delta+y)||g(y)| \, dy + \int_{-x}^{t}|K(x+\delta+y)-K(x+y)||g(y)| \, dy \\
& \leq \left( \int_{-x-\delta}^{x}|K(x+\delta+y)|^2 \, dy + C_x\cdot \delta \cdot |t+x|\right) \Vert g \Vert^2 \\
& \leq \delta  \left( \max_{0 \leq y \leq 3x}|K(y)|^2 + C_x\cdot \delta \cdot |t+x|\right) \Vert g \Vert^2,
\endaligned
$}
\end{equation*}
where $0<\delta<x$ and we used the mean value theorem and the Schwartz inequality. 
Hence, if $f \in L^2(-\infty,t) + \mathsf{K}(L^2(-\infty,t))$, 
$f$ is continuous on $(t,\infty)$. 
Thus $L^2(-\infty,t) + \mathsf{K}(L^2(-\infty,t))$ is a proper closed subspace of $L^2(\R)$. 
\end{proof}
}

\begin{lemma} \label{lem_6_3} Let $z \in \C_+$. Then the integral equations 
\begin{equation} \label{190704_1}
a_z^t + \mathsf{K}\mathsf{P}_t a_z^t = e_z +\mathsf{K}e_z, \qquad 
b_z^t - \mathsf{K}\mathsf{P}_t b_z^t = e_z -\mathsf{K}e_z 
\end{equation}
have unique solutions, where $e_z(x)=\exp(izx)$ 
and $\mathsf{P}_t$ is the projection to $L^2(-\infty,t)$. 
Moreover, for $\Im(z)>c$, 
\begin{equation} \label{190705_4}
a_z^t(t) = 2 \frac{\frak{A}(t,z)}{E(z)}, \qquad b_z^t(t) = -2i \frac{\frak{B}(t,z)}{E(z)}.
\end{equation}
%
%
\end{lemma}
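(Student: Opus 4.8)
The plan is to treat the two integral equations in \eqref{190704_1} in parallel, using the operator identities available under the standing hypotheses of the section, namely that $\mathsf{K}$ is an isometry on $L^2(\R)$ with $\mathsf{K}^2=\mathrm{id}$, that $\mathsf{K}[t]=\mathsf{P}_t\mathsf{K}\mathsf{P}_t$, and that $\pm 1$ are not eigenvalues of $\mathsf{K}[t]$ for every $t$. First I would establish existence and uniqueness. Applying $\mathsf{P}_t$ to the first equation in \eqref{190704_1} and using $\mathsf{K}\mathsf{P}_t a_z^t = \mathsf{K}\mathsf{P}_t\mathsf{P}_t a_z^t$ together with $\mathsf{P}_t\mathsf{K}\mathsf{P}_t=\mathsf{K}[t]$, one gets a closed equation for $\mathsf{P}_t a_z^t$ in $L^2(-\infty,t)$ of the form $(1+\mathsf{K}[t])\,\mathsf{P}_t a_z^t = \mathsf{P}_t(e_z+\mathsf{K}e_z)$; since $1+\mathsf{K}[t]$ is invertible (as $-1$ is not an eigenvalue), this determines $\mathsf{P}_t a_z^t$ uniquely, and then $a_z^t=e_z+\mathsf{K}e_z-\mathsf{K}\mathsf{P}_t a_z^t$ gives the full solution. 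The right-hand side $e_z+\mathsf{K}e_z$ lies in $L^2$ near $-\infty$ only for $\Im(z)>0$, which is exactly the hypothesis $z\in\C_+$; I should note that $e_z\in L^2(t,\infty)$ here and $\mathsf{K}e_z$ makes sense because $e_z$ restricted to $(-\infty,s]$ is square-integrable for any $s$. The second equation is handled identically with $1-\mathsf{K}[t]$ in place of $1+\mathsf{K}[t]$.

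The substance of the lemma is the evaluation formula \eqref{190705_4}, and this is where I would connect $a_z^t$, $b_z^t$ to the integral equations \eqref{s304_3}, \eqref{s304_4} for $\phi^\pm(t,x)$ and hence, via Lemma \ref{lem_5_1}, to $\frak{A}(t,z)$ and $\frak{B}(t,z)$. The idea is that $a_z^t$ should be expressible through $\phi^+(t,\cdot)$ and the exponential $e_z$. Concretely, on $(-\infty,t]$ the equation $(1+\mathsf{K}[t])\mathsf{P}_t a_z^t=\mathsf{P}_t e_z+\mathsf{P}_t\mathsf{K}e_z$; writing $(\mathsf{K}e_z)(x)=\int_{-\infty}^{\infty}K(x+y)e^{izy}\,dy$, and using (K3) so the integral runs over $y\ge -x$, one computes $(\mathsf{K}e_z)(x)=e^{-izx}\,\Theta_E(z)$ for $\Im(z)>c$ by Lemma \ref{lem_3_2} (formula \eqref{s303}). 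Then I would recognize the solution as a superposition built from the resolvent kernel $R^+(t;x,y)$ of $1+\mathsf{K}[t]$, whose diagonal value is $\phi^+(t,x)$ at $y=t$ by the identity recalled before Proposition \ref{prop_190620_2}; evaluating the resulting expression at $x=t$ and collecting terms should produce precisely $e^{izt}+\int_{t}^{\infty}\phi^+(t,x)e^{izx}\,dx$ up to the factor $\tfrac{1}{2}E(z)$, which by \eqref{190703_1} equals $2\frak{A}(t,z)/E(z)$. The computation for $b_z^t$ is the same with $1-\mathsf{K}[t]$, $R^-$, $\phi^-$, and \eqref{formula_02_b}, yielding $-2i\frak{B}(t,z)/E(z)$.

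The main obstacle I anticipate is the bookkeeping in passing from the operator-level equation on all of $L^2(\R)$ to the explicit pointwise formula at $x=t$: one must carefully track how the Hankel kernel acts on $e_z$ across the support boundary, keep straight which integrals are taken over $(-\infty,t)$ versus $(t,\infty)$, and justify that the resolvent kernel identities (which were proved in \cite[Section 2]{Su19_1} for the operators on $L^2(-\infty,t)$) can be combined with $\mathsf{K}^2=\mathrm{id}$ to produce a solution valid as an honest $L^2(\R)$ function and not merely in a formal sense. The convergence of the tail integrals $\int_t^\infty\phi^\pm(t,x)e^{izx}\,dx$ for $\Im(z)>c$ is guaranteed by Proposition \ref{prop_190620_2}\,(3), as already used in Lemma \ref{lem_5_1}, so once the algebraic identification is in place the analytic points are routine. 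An alternative, perhaps cleaner, route would be to verify directly that the function $e_z+\int_{\cdot}^{\infty}\phi^+(t,x)\,(\text{appropriate kernel})$ — or more simply a suitable combination of $e_z$ and $\mathsf{K}e_z$ with $\phi^+(t,\cdot)$ — solves the first equation in \eqref{190704_1}, and then invoke the uniqueness just established; this sidesteps the resolvent kernel entirely and reduces everything to checking one operator identity using \eqref{s304_3} and $\mathsf{K}^2=\mathrm{id}$.
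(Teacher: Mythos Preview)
Your overall strategy matches the paper's own argument (which in turn defers to \cite[Lemma~4.4]{Su19_1}): project to $L^2(-\infty,t)$, invoke invertibility of $1\pm\mathsf{K}[t]$, then evaluate at $x=t$ by exploiting the relation between $\phi^\pm(t,\cdot)$ and the resolvent of $1\pm\mathsf{K}[t]$, and finally identify the result with $\frak{A}$, $\frak{B}$ via \eqref{190703_1}. Your ``duality'' route via the resolvent identity $\phi^+(t,x)=R^+(t;x,t)$ and the self-adjointness of $\mathsf{K}[t]$ is exactly what the paper does, phrased slightly differently: one obtains
\[
a_z^t(t)=e^{izt}+(\mathsf{K}e_z)(t)-\int_{-\infty}^{t}\phi^+(t,x)\bigl(e^{izx}+(\mathsf{K}e_z)(x)\bigr)\,dx,
\]
and then, using $(\mathsf{K}e_z)(x)=e^{-izx}\Theta_E(z)$ together with the Fourier transform of \eqref{s304_3}, the $\int_{-\infty}^t$ pieces collapse to $\int_t^\infty \phi^+(t,x)e^{izx}\,dx$, giving \eqref{190705_4}.

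There is one genuine slip in your existence/uniqueness step. You write that ``$e_z+\mathsf{K}e_z$ lies in $L^2$ near $-\infty$ only for $\Im(z)>0$'', but this is backwards: for $\Im(z)>0$ one has $|e_z(x)|=e^{-\Im(z)x}$, which blows up as $x\to-\infty$. Hence $\mathsf{P}_t(e_z+\mathsf{K}e_z)\notin L^2(-\infty,t)$ and the equation $(1+\mathsf{K}[t])\mathsf{P}_ta_z^t=\mathsf{P}_t(e_z+\mathsf{K}e_z)$ cannot be inverted in $L^2$ as written. The paper's remedy is to pass to the unknown $u=a_z^t-e_z$, which satisfies $u+\mathsf{K}\mathsf{P}_tu=\mathsf{K}(1-\mathsf{P}_t)e_z$; projecting now gives $(1+\mathsf{K}[t])\mathsf{P}_tu=\mathsf{P}_t\mathsf{K}(1-\mathsf{P}_t)e_z$, whose right-hand side \emph{is} in $L^2(-\infty,t)$ because $(1-\mathsf{P}_t)e_z\in L^2(\R)$ and $\mathsf{K}$ is bounded. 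With this correction your argument goes through unchanged.
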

\comment{
\begin{proof} 
Note that $\mathsf{K}e_z$ makes sense as a function, 
because $(\mathsf{K}e_z)(x)=e^{-izx}\Theta(z)$ holds if $\Im(z)>c$ by (K1), 
$(\mathsf{K}e_z)(x)=(\mathsf{K}\mathbf{1}_{>t}e_z)(x)+\int_{-x}^{t} K(x+y)e^{izy} \, dy$ holds if $0<\Im(z) \leq c$ by (K3), 
and $\mathbf{1}_{>t}e_z \in L^2(\R)$. 
First, we show the uniqueness of the solution. 
After subtracting $\mathsf{P}_te_z+\mathsf{K}\mathsf{P}_te_z$ (resp. $\mathsf{P}_te_z-\mathsf{K}\mathsf{P}_te_z$) 
from both sides of \eqref{190704_1}, 
multiplying by $\mathsf{P}_t$ on both sides, 
we find that equations \eqref{190704_1} have unique solutions 
\[
\aligned 
a_z^t &= e_z +\mathsf{K}(1-\mathsf{P}_t)e_z - \mathsf{K}(1+\mathsf{K}[t])^{-1}\mathsf{P}_t\mathsf{K}(1-\mathsf{P}_t)e_z, \\
b_z^t &= e_z -\mathsf{K}(1-\mathsf{P}_t)e_z - \mathsf{K}(1-\mathsf{K}[t])^{-1}\mathsf{P}_t\mathsf{K}(1-\mathsf{P}_t)e_z, 
\endaligned 
\]
respectively. By multiplying both sides of 
$a_z^t = e_z + \mathsf{K}e_z - \mathsf{K}\mathsf{P}_ta_z^t$ 
and 
$K(x+t) - \int_{-\infty}^{t}K(x+y)\phi^+(t,y)\,dy = \phi^+(t,x)$, 
integrating the obtained equation with respect to $x$ from $-\infty$ to $t$, 
and using the symmetry of the kernel $K(x+y)$, 
we obtain 
\[
\int_{-\infty}^{t} a_z^t(x)K(t+x)\, dx 
=
\int_{-\infty}^{t} \phi^+(t,x)(e^{izx}+(\mathsf{K}e_z)(x))\, dx. 
\]
Combining this with \eqref{190704_1}, 
\begin{equation} \label{190705_1}
a_z^t(t) = e^{izt}+(\mathsf{K}e_z)(t) - \int_{-\infty}^{t} \phi^+(t,x)(e^{izx}+(\mathsf{K}e_z)(x))\, dx. 
\end{equation}

If we suppose that $\Im(z)>c$,  
$(\mathsf{K}e_z)(x)=e^{-izx}\Theta(z)$ holds, 
$F(z):=\mathsf{F}(\phi^+(t,\cdot))(z)$ is defined by Proposition \ref{prop_190620_2}\,(3), 
and 
\begin{equation} \label{190705_2}
F(z) = 2 \frac{\frak{A}(t,z)}{E(z)} -e^{izt}+\int_{-\infty}^{t}\phi^+(t,x)e^{izx} \, dx  
\end{equation}
by \eqref{190703_1}. 
On the other hand, by taking the Fourier transform of both sides of \eqref{s304_3}, 
\begin{equation} \label{190705_3}
F(z) + \Theta(z) \int_{-\infty}^{t}\phi^+(t,x)e^{-izx} \, dx = e^{-izt}\Theta(z). 
\end{equation}
Substituting \eqref{190705_2} into \eqref{190705_3}, and arranging, 
we find that the right-hand side of \eqref{190705_1} 
equals $2\frak{A}(t,z)/E(z)$. 
Hence the formula on the left of \eqref{190705_4} holds. 
The formula on the right of \eqref{190705_4} is proved by a similar argument. 
\end{proof}
}

%
\comment{
Integral equations \eqref{190704_1} are solved as 
\[
a_z^t(x) =  a_z^x(x) + \int_{t}^{x}a_z^s(s)\phi^+(s,x) \, ds, 
\qquad 
b_z^t(x) = b_z^x(x) - \int_{t}^{x}b_z^s(s)\phi^-(s,x) \, ds. 
\]
In fact, differentiating both sides of \eqref{190704_1} with respect to $t$, 
\[
\frac{\partial}{\partial t}a_z^t(x) + \int_{-\infty}^{t}K(x+y)\frac{\partial}{\partial t}a_z^t(y)\,dy = (-a_z^t(t))K(x+t). 
\]
This shows that $(-a_z^t(t))^{-1}(\partial/\partial t)a_z^t(x)$ 
is a solution of \eqref{s304_3}, hence it equals to $\phi^+(t,x)$. 
Then, we obtain $a_z^t$ by integrating 
$(\partial/\partial t)a_z^t(x)=-a_z^t(t)\phi^+(t,x)$ 
with respect to $t$. 
We obtain $b_z^t$ by a way similar to $a_z^t$.  
}
%
Then,  (1) and (2) of Theorem \ref{thm_4}
are proved in the same argument as 
Sections 4.2 and 4.3 of \cite{Su19_1}, respectively.  \hfill $\Box$

\comment{
%
%
\subsection{Proof of Theorem \ref{thm_4} (1)}
%
%

Let $z \in \C_+$. Then $f \mapsto \mathsf{F}f(z)$ is a continuous functional on $\mathcal{V}_t$, 
since the point evaluation $F \mapsto F(z)$ is continuous on $H^2$ 
and the Fourier transform $\mathsf{F}$ is an isometry between $L^2(0,\infty)$ and $H^2$ up to a constant. 
Therefore, there exists a unique vector $Y_z^t \in \mathcal{V}_t$ such that 
$\int_0^\infty f(x)Y_z^t(x)\,dx=(\mathsf{F}f)(z)$ holds for all $f \in \mathcal{V}_t$ by the Riesz representation theorem. 
Let $j(t;z,w)$ be the reproducing kernel of $\mathsf{F}(\mathcal{V}_t)$. 
Then, 
\begin{equation} \label{190704_4}
j(t;z,w) = \frac{1}{2\pi}\langle Y_w^t, Y_z^t \rangle, 
\end{equation}
where $\langle f,g \rangle = \int_{\R} f(u)\overline{g(u)}du$ as above. 
In fact, for $F=\mathsf{F}f \in \mathsf{F}(\mathcal{V}_t)$, 
\[
\aligned 
\langle & F(w), \langle Y_w^t, Y_z^t \rangle \rangle_{H^2} 
 = \int_\R F(w) \left( \int_\R Y_{-\bar{w}}^t(x) Y_z^t(x) \, dx \right) \, dw \\
& = \int_\R F(w) \left( \int_\R e^{-i\bar{w}x} Y_z^t(x) \, dx \right) \, dw 
 = 2\pi \int_\R f(x) e^{-i\bar{w}x} Y_z^t(x) \, dx = 2\pi F(z).  
\endaligned 
\]

On the other hand, $Y_z^t$ is the orthogonal projection of $\mathbf{1}_{\geq t}(x)e^{izx} \in L^2(\R)$ to $\mathcal{V}_t$. 
By \eqref{s601}, there exists unique vectors $u_z^t$ and $v_z^t$ in $L^2(-\infty,t)$ such that 
\begin{equation} \label{s604}
\mathbf{1}_{[t,\infty)}e_z=Y_z^t + u_z^t+\mathsf{K}v_z^t. 
\end{equation}
Using the solutions $a_z^t$ and $b_z^t$ of \eqref{190704_1}, equation \eqref{s604} is solved as 
\begin{equation} \label{190704_3}
Y_z^t = (1-\mathsf{P}_t)\frac{1}{2}(a_z^t+b_z^t). 
\end{equation}
This equality is proved as follows. 
Put $U_z^t=(a_z^t+b_z^t)/2 -e_z$ and $V_z^t=(a_z^t-b_z^t)/2$. 
Then, $U_z^t + \mathsf{K}\mathsf{P}_tV_z^t=0$ and 
$V_z^t + \mathsf{K}\mathsf{P}_tU_z^t=\mathsf{K}(1-\mathsf{P}_t)e_z$ 
by \eqref{190704_1}. By multiplying $\mathsf{K}$ on both sides of the second equation,
$(1-\mathsf{P}_t)e_z = \mathsf{K}(1-\mathsf{P}_t)V_z^t + \mathsf{P}_tU_z^t + \mathsf{K}\mathsf{P}_tV_z^t$. 
Therefore, $Y_z^t=\mathsf{K}(1-\mathsf{P}_t)V_z^t$, 
$u_z^t=\mathsf{P}_tU_z^t$ and $v_z^t=\mathsf{P}_tV_z^t$. 
Moreover, 
\[
Y_z^t=\mathsf{K}(1-\mathsf{P}_t)V_z^t
=\mathsf{K}V_z^t-\mathsf{K}\mathsf{P}_tV_z^t
=((1-\mathsf{P}_t)e_z -\mathsf{P}_tU_z^t) + U_z^t
= (1-\mathsf{P}_t)(e_z+U_z^t). 
\]
Hence \eqref{190704_3} holds. By differentiating both sides of \eqref{190704_1} with respect to $x$, we obtain 
\[
\frac{\partial}{\partial x}a_z^t(x)  -  \int_{-\infty}^t K(x+y)\frac{\partial}{\partial y}a_z^t(y)\,dy = -K(x+t)a_z^t(t) + iz(e^{izx}-\mathsf{K}e^{izx}) 
\]
Multiplying the right equation of \eqref{190704_1} by $iz$ and then subtracting from this equation, 
we find that the function $(-a_z^t(t))^{-1}((\partial/\partial x)a_z^t(x) -iz b_z^t(x))$ 
solves \eqref{s304_4}. Therefore, by the uniqueness of the solution of \eqref{s304_4},  
\[
\frac{\partial}{\partial x}a_z^t(x) -iz b_z^t(x) = -a_z^t(t)\phi^-(t,x). 
\]
We also obtain 
\[
\frac{\partial}{\partial x}b_z^t(x) -iz b_z^t(x) = b_z^t(t)\phi^+(t,x). 
\]
by a similar argument. Adding these, 
\[
\frac{\partial}{\partial x}(a_z^t(x)+b_z^t(x)) -iz (a_z^t(x)+b_z^t(x)) = b_z^t(t)\phi^+(t,x)-a_z^t(t)\phi^-(t,x). 
\]
Hence, if $\Im(z)>c$ and $\Im(w)>c$, 
\[
\aligned 
-i&(z+w) \int_{0}^{\infty} Y_z^t(x) e^{iwx}\,dx 
= -i(z+w) \int_{t}^{\infty} \frac{1}{2}(a_z^t(x)+b_z^t(x)) e^{iwx}\,dx \\
&= \frac{1}{2}b_z^t(t)\left( e^{iwt} + \int_{t}^{\infty} \phi^+(t,x) e^{iwx}\,dx \right) 
 + \frac{1}{2}a_z^t(t)\left( e^{iwt} - \int_{t}^{\infty} \phi^-(t,x)e^{iwx}\,dx \right) \\
&= -2i\frac{\frak{B}(t,z)}{E(z)}\frac{\frak{A}(t,w)}{E(w)}
 -2i \frac{\frak{A}(t,z)}{E(z)}\frac{\frak{B}(t,w)}{E(w)}
\endaligned 
\]
by \eqref{190703_1} and \eqref{190705_4}. Therefore, 
\[
\aligned 
\frac{1}{2\pi} \langle Y_w^t, Y_z^t \rangle
& = \frac{1}{2\pi} \int Y_w^t(x)\overline{Y_z^t(x)} \, dx = \frac{1}{2\pi}\int Y_w^t(x)e^{-i\bar{z}x} \, dx \\ 
& = \frac{1}{E(-\bar{z})E(w)}
\frac{\frak{B}(t,w)\frak{A}(t,-\bar{z}) + \frak{A}(t,w)\frak{B}(t,-\bar{z})}{\pi(w-\bar{z})} \\
& = \frac{1}{\overline{E(z)}E(w)}
\frac{\overline{A(t,z)}B(t,w) - A(t,w)\overline{B(t,z)}}{\pi(w-\bar{z})} 
\endaligned 
\]
by \eqref{190704_4}, \eqref{190709_3}, and Theorem \ref{thm_1}\,(1). 
Hence we obtain \eqref{s106} by \eqref{190704_4} under the restrictions $\Im(z)>c$ and $\Im(w)>c$, 
but \eqref{s106} holds for all $z,w \in \C_+$ by analytic continuation. 
Hence we complete the proof. \hfill $\Box$

%
%
\subsection{Proof of Theorem \ref{thm_4}\,(2)}
%
%

Suppose that $j(t;z,w) \equiv 0$ for some $t>0$. 
Then $\mathcal{V}_t=\{0\}$, since $j(t;z,w)$ is the reproducing kernel of $\mathsf{F}(\mathcal{V}_t)$. 
This contradicts Lemma \ref{lem_6_2}, and thus $j(t;z,w) \not\equiv 0$ for every $t>0$. 
We have 
\begin{equation*}
|j(t;z,w)| = 2\pi |\langle Y_w^t, Y_z^t \rangle| \leq \Vert Y_z^t \Vert \cdot \Vert Y_w^t \Vert
\end{equation*}
for fixed $z,w \in \C_+$ by \eqref{s106}, where $\Vert\cdot\Vert=\Vert\cdot\Vert_{L^2(\R)}$. 
Therefore, for Theorem \ref{thm_4} (2), it is sufficient to show that $\Vert Y_z^t \Vert \to 0$ as $t\to \infty$ 
for a fixed $z \in \C_+$. 
We have 
$\Vert Y_z^t \Vert^2 
= \langle Y_z^t, Y_z^t \rangle
= (\mathsf{F}Y_z^t)(z)
= \langle Y_z^t, Y_z^0 \rangle$, 
since $\mathcal{V}_t \subset \mathcal{V}_0$. 
Therefore, 
\begin{equation*}
\langle Y_z^t, Y_z^0 \rangle 
= \left|\int_{t}^{\infty} Y_z^t(x)\overline{Y_z^0(x)} \, dx \right| 
\leq \Vert Y_z^t\Vert \left( \int_{t}^{\infty} |Y_z^0(x)|^2 \, dx \right)^{1/2}
\end{equation*}
by the Cauchy--Schwarz inequality. 
Hence, $\Vert Y_z^t \Vert^2 \leq \int_{t}^{\infty} |Y_z^0(x)|^2 \, dx$. 
This shows that $\Vert Y_z^t \Vert \to 0$ as $t \to \infty$, since $Y_z^0 \in L^2(0,\infty)$.   \hfill $\Box$
}

%
%
\section{Model subspaces for general $t$} \label{section_7} 
%
%

As in Section \ref{section_6}, 
we suppose that $E$ satisfies (K1)$\sim$(K5) with $\tau=\infty$ throughout the section.

\begin{theorem} 
Let $A(t,z)$ and $B(t,z)$ be as in Theorem \ref{thm_1} and define 
\begin{equation}\label{s339} 
E(t,z) = A(t,z) - iB(t,z), \qquad 
\Theta(t,z) = \frac{\overline{E(t,\bar{z})}}{E(t,z)}
\end{equation}
for $t \in \R$. Then $\Theta(t,z)$ is an inner function in $\C_+$. 
\end{theorem}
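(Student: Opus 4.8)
The plan is to show that $\Theta(t,\cdot)$ is the quotient $E(t,\cdot)^\sharp/E(t,\cdot)$ of a Hermite--Biehler-type pair, and then to identify it, via the same Fourier-analytic machinery used for $t=0$, with the symbol of a model space $\mathsf{F}(\mathcal{V}_t)$, from which the inner property follows. First I would record the symmetry relations: by Theorem \ref{thm_1}\,(1), $A(t,-z)=A(t,z)$, $B(t,-z)=-B(t,z)$, $A(t,z)=\overline{A(t,\bar z)}$, $B(t,z)=\overline{B(t,\bar z)}$, so $E(t,z)^\sharp=\overline{E(t,\bar z)}=A(t,z)+iB(t,z)=E(t,-z)$, and hence $\Theta(t,z)=E(t,-z)/E(t,z)$. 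Thus $\Theta(t,z)\Theta(t,-z)=1$ on $\C\setminus\mathcal{Z}$ and $|\Theta(t,u)|=1$ for $u\in\R\setminus\mathcal{Z}$, exactly as in \eqref{105}; what remains is to prove $\Theta(t,\cdot)\in H^\infty(\C_+)$, since $|\Theta(t,\cdot)|=1$ a.e.\ on $\R$ then upgrades it to an inner function.

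The core step is to establish the analogue of Lemma \ref{lem_3_2} and Lemma \ref{lem_6_1} at level $t$. Using \eqref{190703_1} and Proposition \ref{prop_190620_2}\,(3), for $\Im(z)>c$ one has
\begin{equation*}
\Theta(t,z)=\frac{E(t,-z)}{E(t,z)}
=\frac{e^{-izt}-\int_t^\infty\phi^-(t,x)e^{-izx}\,dx}{e^{izt}+\int_t^\infty\phi^+(t,x)e^{izx}\,dx}\cdot\frac{?}{?},
\end{equation*}
but a cleaner route avoids this explicit ratio. Instead I would consider the shifted Hankel operator on $L^2(t,\infty)$, or rather work with $\mathcal{V}_t=L^2(t,\infty)\cap\mathsf{K}L^2(t,\infty)$ directly: the vectors $a_z^t,b_z^t$ of Lemma \ref{lem_6_3}, together with $Y_z^t=(1-\mathsf{P}_t)\tfrac12(a_z^t+b_z^t)$ (the projection of $\mathbf 1_{[t,\infty)}e_z$ onto $\mathcal{V}_t$), represent point evaluation in $\mathsf{F}(\mathcal{V}_t)$, and Theorem \ref{thm_4}\,(1) expresses the reproducing kernel $j(t;z,w)$ in terms of $A(t,\cdot),B(t,\cdot)$. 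The identity \eqref{s106} rearranges to
\begin{equation*}
j(t;z,w)=\frac{\overline{E(t,z)}E(t,w)-\overline{E(t,z)^\sharp}E(t,w)^\sharp}{2\pi i(\bar z-w)}\cdot\frac{1}{\overline{E(z)}E(w)},
\end{equation*}
i.e.\ $\overline{E(z)}E(w)\,j(t;z,w)$ is a de Branges-type kernel built from $E(t,\cdot)$, and since $\mathsf{F}(\mathcal{V}_t)\subset\mathsf{F}(\mathcal{V}_0)=\mathcal{K}(\Theta_E)$ (Lemma \ref{lem_6_1}) with positive semidefinite reproducing kernel, the kernel $\tfrac{1-\overline{\Theta(t,z)}\Theta(t,w)}{2\pi i(\bar z-w)}=\overline{E(t,z)}E(t,w)\,j(t;z,w)/\bigl(|?|\bigr)$ is positive semidefinite on $\C_+$; positive semidefiniteness of this Nevanlinna--Pick-type kernel together with $|\Theta(t,\cdot)|=1$ on $\R$ forces $\Theta(t,\cdot)$ to be a bounded analytic function on $\C_+$ with unimodular boundary values, hence inner. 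Alternatively, and perhaps more transparently, one checks $\Theta(t,\cdot)H^2\subset H^2$ directly: for $F=\mathsf{F}f$ with $f\in L^2(0,\infty)$, supported in $(-\infty,s]$ after suitable approximation, the relation $\mathsf{F}\mathsf{K}f(z)=\Theta_E(z)\mathsf{F}f(-z)$ and the explicit form of $\Theta(t,z)$ as a ratio of half-line Fourier integrals of $\phi^\pm(t,\cdot)$ (which decay like $e^{cx}$) give that $\Theta(t,z)F(z)$ extends to $\C_+$ and lies in $H^2$, whence $\Theta(t,\cdot)\in H^\infty$ by Lemma \ref{lem_190628_1}.

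The main obstacle I anticipate is precisely the passage from "$\Theta(t,\cdot)$ is meromorphic on $\C\setminus\mathcal{Z}$, unimodular on $\R$, and satisfies $\Theta(t,z)\Theta(t,-z)=1$" to "$\Theta(t,\cdot)$ is bounded on $\C_+$": the functional equation and the boundary modulus alone do not control growth inside $\C_+$ near the points of $\mathcal{Z}$ (which Theorem \ref{thm_2} places on $\R$) or at infinity. This is why one genuinely needs the reproducing-kernel / Hankel-operator input: either the positive-semidefiniteness of $j(t;z,\cdot)$ inherited from $\mathsf{F}(\mathcal{V}_t)$ being a subspace of the model space $\mathcal{K}(\Theta_E)$, or the multiplier argument via Lemma \ref{lem_190628_1}. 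I would carry out the argument in this order: (i) derive $E(t,z)^\sharp=E(t,-z)$ and the two relations in \eqref{105} for $\Theta(t,\cdot)$ from Theorem \ref{thm_1}\,(1); (ii) recall from Theorem \ref{thm_4}\,(1) that $j(t;z,z)\not\equiv0$ and that $\overline{E(z)}E(w)\,j(t;z,w)$ is the de Branges kernel of $E(t,\cdot)$; (iii) conclude positive semidefiniteness of the Herglotz-type kernel $\tfrac{1-\overline{\Theta(t,z)}\Theta(t,w)}{\bar z-w}$ on $\C_+$; (iv) invoke the standard characterization (de Branges, or Lemma \ref{lem_190628_1}) that such a kernel being positive semidefinite together with $|\Theta(t,\cdot)|=1$ a.e.\ on $\R$ is equivalent to $\Theta(t,\cdot)$ being inner in $\C_+$.
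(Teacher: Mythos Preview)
Your strategy for $t\geq 0$ is sound and in fact slightly leaner than the paper's. Both routes pass through the identity $J(t;z,w):=\overline{E(z)}E(w)\,j(t;z,w)=\bigl(\overline{E(t,z)}E(t,w)-\overline{E(t,z)^\sharp}E(t,w)^\sharp\bigr)/\bigl(2\pi i(\bar z-w)\bigr)$ derived from Theorem~\ref{thm_4}\,(1). You then appeal directly to $j(t;z,z)\geq 0$ (automatic for any reproducing kernel) to obtain $|E(t,z)|\geq |E(t,z)^\sharp|$ on $\C_+$, hence $|\Theta(t,z)|\leq 1$, and combine this with $|\Theta(t,u)|=1$ on $\R\setminus\mathcal{Z}$. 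The paper instead uses the canonical-system identity \eqref{s342} and Theorem~\ref{thm_4}\,(2) to produce the explicit nonnegative integral representation $J(t;z,z)=\pi^{-1}\int_t^\infty\bigl(|A(s,z)|^2\gamma(s)^{-1}+|B(s,z)|^2\gamma(s)\bigr)\,ds$, which again forces $|\Theta(t,z)|<1$. Your shortcut avoids invoking \eqref{s342} and the limit in Theorem~\ref{thm_4}\,(2); the paper's route, on the other hand, gives the integral formula \eqref{s342_b} as a byproduct (used later for Corollary~\ref{cor_190705_1}) and also shows directly that $E(t,z)$ has no zeros in $\C_+$. In your version you should add one line handling possible zeros of $E(t,z)$ in $\C_+$: the inequality $|E(t,z)^\sharp|\leq|E(t,z)|$ forces any such zero to be a zero of $E(t,z)^\sharp$ of at least the same order, so the singularity of $\Theta(t,\cdot)$ there is removable. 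Phrasing step~(iii) as positivity of the Nevanlinna--Pick kernel built from $\Theta(t,\cdot)$ presupposes $E(t,z)\neq 0$; work with $J(t;z,z)\geq 0$ first, then divide.

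There is one genuine omission: you treat only the case covered by Theorem~\ref{thm_4}\,(1), namely $t\geq 0$. For $t<0$ the space $\mathsf{F}(\mathcal{V}_t)$ is no longer contained in $H^2$ (it contains $\mathsf{F}L^2(t,-t)$), so your reproducing-kernel argument does not apply as stated. The paper handles $t\leq 0$ separately and elementarily: from \eqref{s331} one gets $E(t,z)=E(z)e^{izt}$ and $\Theta(t,z)=\Theta_E(z)e^{-2izt}$, a product of two inner functions. You should add this case explicitly.
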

\begin{proof}
Note that $\mathcal{Z} \subset \R$, since $\Theta_E$ is an inner function in $\C_+$ 
by the assumption. 
From Theorem \ref{thm_1}\,(1), we have $\overline{E(t,\bar{z})} = A(t,z) + iB(t,z)$. 
Therefore, $|\Theta(t,z)|=1$ for almost all $z \in \R$. 
Suppose that $t \geq 0$ and put $J(t;z,w)=\overline{E(z)}E(w)j(t;z,w)$. 
Then, \eqref{s106} is transformed as 
\begin{equation} 
J(t;z,w) 
 = \frac{\overline{E(t,z)}E(t,w)-E(t,\bar{z})\overline{E(t,\bar{w})}}{2\pi i(\bar{z}-w)} 
\end{equation}
by using \eqref{s339} and $\overline{E(t,\bar{z})} = A(t,z) + iB(t,z)$. 
On the other hand, we have 
\begin{equation} \label{s342}
\scalebox{0.95}{$\displaystyle
J(t;z,w) - J(s;z,w) 
 = \frac{1}{\pi}\int_{t}^{s}\overline{A(t,z)}A(t,w) \, \frac{1}{\gamma(t)} \, dt 
+ \frac{1}{\pi}\int_{t}^{s}\overline{B(t,z)}B(t,w) \, \gamma(t) \, dt 
$}
\end{equation}
for any $t<s< \infty$ and $z,w \in \C_+$ (\cite[Propositon 2.4]{Su19_1}). 
Taking $w=z \in \C_+$ in \eqref{s342} and then tending $s$ to $\infty$, we have 
\begin{equation} \label{s342_b}
J(t;z,z) 
= \frac{1}{\pi}\int_{t}^{\infty}|A(t,z)|^2 \, \frac{1}{\gamma(t)} \, dt 
+ \frac{1}{\pi}\int_{t}^{\infty}|B(t,z)|^2 \, \gamma(t) \, dt 
\end{equation}
by Theorem \ref{thm_4}\,(2), where $\gamma(t)=m(t)^2>0$. 
This shows that $|\Theta(t,z)|<1$ for any $z \in \C_+$, since the left-hand side 
equals to $(|E(t,z)|^2-|\overline{E(t,\bar{z})}|^2)/(2\pi\Im(z))$. 
Hence $\Theta(t,z)$ is an inner function in $\C_+$. 
For $t \leq 0$, we have 
\begin{equation} \label{190711_1}
E(t,z) = E(z)e^{izt}, \qquad \Theta(t,z) = \Theta_E(z) e^{-2izt}
\end{equation}
by \eqref{s331}. Hence $\Theta(t,z)$ is an inner function in $\C_+$, 
since the factors $\Theta(z)$ and  $e^{-2izt}$ are both inner functions in $\C_+$. 
\end{proof}

Equality \eqref{s342_b} also shows the following. 

\begin{corollary} \label{cor_190705_1} 
$\frak{A}(t,z)$ and $\frak{B}(t,z)$ 
are square-integrable at $t=\infty$ for each $z \in \C_+$. 
\end{corollary}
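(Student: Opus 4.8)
The plan is to read the statement off directly from the identity \eqref{s342_b}. First I would fix $z \in \C_+$ and set $t=0$ in \eqref{s342_b}, writing $s$ for the integration variable to avoid the clash with the lower endpoint; this gives
\[
J(0;z,z) = \frac{1}{\pi}\int_{0}^{\infty}|A(s,z)|^2 \, \frac{1}{\gamma(s)} \, ds + \frac{1}{\pi}\int_{0}^{\infty}|B(s,z)|^2 \, \gamma(s) \, ds .
\]
The left-hand side is a finite nonnegative number: by \eqref{s339} the value $E(t,z)=A(t,z)-iB(t,z)$ is a well-defined complex number for $z \in \C_+$, because $A(t,\cdot)$ and $B(t,\cdot)$ are analytic on $\C\setminus\mathcal{Z}$ with $\mathcal{Z}\subset\R$, and, as recorded in the proof of the theorem above, $J(t;z,z)=(|E(t,z)|^2-|\overline{E(t,\bar{z})}|^2)/(2\pi\Im(z))$. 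In fact this finiteness is already presupposed in the derivation of \eqref{s342_b}, since that derivation invokes Theorem \ref{thm_4}\,(2) to pass to the limit $s\to\infty$.

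Next I would identify the two integrands. By \eqref{190709_3} one has $\frak{A}(t,z)=m(t)^{-1}A(t,z)$ and $\frak{B}(t,z)=m(t)B(t,z)$, while $\gamma(t)=m(t)^2$ by \eqref{s333}; hence $|\frak{A}(s,z)|^2 = |A(s,z)|^2/\gamma(s)$ and $|\frak{B}(s,z)|^2 = |B(s,z)|^2\gamma(s)$. Substituting this into the displayed identity shows that $\int_{0}^{\infty}|\frak{A}(s,z)|^2\,ds$ and $\int_{0}^{\infty}|\frak{B}(s,z)|^2\,ds$ are both finite, the integrands being nonnegative; in particular the tails $\int_{T}^{\infty}|\frak{A}(s,z)|^2\,ds$ and $\int_{T}^{\infty}|\frak{B}(s,z)|^2\,ds$ converge for every $T$, which is the asserted square-integrability at $t=\infty$. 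Local integrability on any finite subinterval of $[0,\infty)$ is automatic, since $A(t,z)$ and $B(t,z)$ are continuous in $t$ by Theorem \ref{thm_1}\,(2) and $\gamma(t)$ is continuous and strictly positive by Theorem \ref{thm_5} and Proposition \ref{prop_190620_1}.

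I expect no real obstacle here; the only point that deserves a comment is the finiteness of $J(0;z,z)$, and this is built into the proof of \eqref{s342_b} itself. So the corollary is essentially the observation that the right-hand side of \eqref{s342_b} at $t=0$ is exactly the sum of the squared $L^2(0,\infty)$-norms of $\frak{A}(\cdot,z)$ and $\frak{B}(\cdot,z)$.
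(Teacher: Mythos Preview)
Your argument is correct and is exactly the approach the paper takes: the corollary is stated immediately after \eqref{s342_b} with the remark that it follows from that equality, and you have simply spelled out the identification $|A(s,z)|^2/\gamma(s)=|\frak{A}(s,z)|^2$, $|B(s,z)|^2\gamma(s)=|\frak{B}(s,z)|^2$ that makes this explicit.
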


Recall that $\int_{\alpha}^{\infty} \gamma(t)^{-1} \, dt<\infty$ is a part of 
the sufficient condition in \cite[Theorem 41]{MR0229011} 
for the existence of the solution of the canonical system for $H(t)={\rm diag}(1/\gamma(t),\gamma(t))$ on $[\alpha,\infty)$.  
We find that it is necessary if $E(0)=A(0)\not=0$. In fact, we have 
$J(t;0,0)  = \pi^{-1}A(0)^2 \int_{t}^{\infty} \gamma(t)^{-1} \, dt $
by taking $z=0$ in \eqref{s342_b}. Therefore, $\gamma(t)^{-1}$ is $L^1$ at $t=\infty$. 
\medskip

By comparing the kernels of $\mathsf{F}(\mathcal{V}_t)$ and $\mathcal{K}(\Theta(t,z))$, 
we obtain the following relation. 
\begin{corollary} \label{cor_190706_1} 
For $t \geq 0$, 
\begin{equation} \label{190706_1} 
\mathsf{F}(\mathcal{V}_t) = \frac{E(t,z)}{E(z)} \mathcal{K}(\Theta(t,z)).
\end{equation}
\end{corollary}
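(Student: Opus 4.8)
The plan is to identify the two Hilbert spaces by comparing their reproducing kernels, which is the standard device for this kind of statement. Recall that $\mathsf{F}(\mathcal{V}_t)$ is a reproducing kernel Hilbert space with kernel $j(t;z,w)$, and that by Theorem \ref{thm_4}\,(1) together with the definition \eqref{s339} we have already established (in the proof of the previous theorem) the identity $\overline{E(z)}E(w)\,j(t;z,w) = J(t;z,w)$ where
\begin{equation*}
J(t;z,w) = \frac{\overline{E(t,z)}E(t,w)-E(t,\bar z)\overline{E(t,\bar w)}}{2\pi i(\bar z - w)}.
\end{equation*}
On the other hand, the model subspace $\mathcal{K}(\Theta(t,\cdot))$ has reproducing kernel $(1-\overline{\Theta(t,z)}\Theta(t,w))/(2\pi i(\bar z - w))$, and since $\Theta(t,z)=\overline{E(t,\bar z)}/E(t,z)$ with $|\Theta(t,z)|<1$ on $\C_+$ (established in the previous theorem, so $\mathcal{K}(\Theta(t,\cdot))$ is genuinely a model space), a direct computation gives that this kernel equals $J(t;z,w)/(\overline{E(t,z)}E(t,w))$.

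First I would record the elementary multiplier fact: if $\mathcal{K}$ is a reproducing kernel Hilbert space of analytic functions on $\C_+$ with kernel $k(z,w)$, and $\varphi$ is analytic and zero-free on $\C_+$, then $\varphi\cdot\mathcal{K} := \{\varphi f : f \in \mathcal{K}\}$, equipped with the norm $\|\varphi f\| := \|f\|_{\mathcal{K}}$, is again a reproducing kernel Hilbert space with kernel $\varphi(z)\overline{\varphi(w)}\cdot\text{(wait, conjugation placement)}$ — more precisely the kernel is $\varphi(w)\overline{\varphi(z)}\,k(z,w)$, because $\langle \varphi f, \varphi(w)\overline{\varphi(z)}k(z,\cdot)\rangle = \varphi(w)\langle f, k(z,\cdot)\rangle\cdot\overline{\overline{\varphi(z)}}\cdot$ — I would just verify the reproducing identity cleanly. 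Applying this with $\mathcal{K}=\mathcal{K}(\Theta(t,\cdot))$ and $\varphi(z)=E(t,z)/E(z)$ (which is analytic and zero-free on $\C_+$ for $t\geq 0$, since $E(t,\cdot)\in\overline{\mathbb{HB}}$ has no zeros in $\C_+$ and $E$ likewise under our standing hypotheses — here I should note $\mathcal{Z}\subset\R$), the right-hand side of \eqref{190706_1} is the RKHS with kernel
\begin{equation*}
\frac{E(t,w)}{E(w)}\,\overline{\frac{E(t,z)}{E(z)}}\cdot\frac{J(t;z,w)}{\overline{E(t,z)}E(t,w)} = \frac{J(t;z,w)}{\overline{E(z)}E(w)} = j(t;z,w).
\end{equation*}
Since a reproducing kernel Hilbert space is determined by its kernel, this forces the identity \eqref{190706_1} as an equality of Hilbert spaces (including norms).

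The one point requiring a little care — and the closest thing to an obstacle — is justifying that $\varphi(z)=E(t,z)/E(z)$ is holomorphic and zero-free on $\C_+$, so that the multiplier construction is legitimate and produces a space of analytic (not merely meromorphic) functions. For $t\leq 0$ this is transparent from \eqref{190711_1}, namely $E(t,z)/E(z)=e^{izt}$, and one could even state \eqref{190706_1} for all $t\in\R$; but the corollary is phrased for $t\geq 0$, where $\varphi$ is controlled via Theorem \ref{thm_1}\,(1), the inclusion $E(t,\cdot),E\in\overline{\mathbb{HB}}$ coming from the previous theorem, and the fact that zeros of $E$ and of $E(t,\cdot)$ lie in $\overline{\C_-}$ (real zeros being excluded since $\mathcal{Z}\subset\R$ and $|\Theta(u)|=1$). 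Once that is in hand the rest is the purely formal kernel comparison above.
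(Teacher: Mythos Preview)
Your approach is exactly the paper's: the entire proof there is the single sentence ``By comparing the kernels of $\mathsf{F}(\mathcal{V}_t)$ and $\mathcal{K}(\Theta(t,z))$, we obtain the following relation,'' and you have simply written out what that comparison means (the multiplier lemma plus the identity $\overline{E(z)}E(w)\,j(t;z,w)=J(t;z,w)$ already established in the preceding theorem). A couple of minor corrections to your side remarks: you should not say $E(t,\cdot)\in\overline{\mathbb{HB}}$, since $E(t,\cdot)$ need only be analytic on $\C\setminus\mathcal{Z}$ (it is not claimed to be entire); what you actually need and have is that $E(t,z)/E(z)$ is analytic on $\C_+$ because $\mathcal{Z}\subset\R$ and the kernel formula \eqref{s106} forces any zero of $E$ in $\C_+$ to be matched by a zero of the numerator, and zero-freeness of the multiplier is not strictly required for the kernel-comparison argument anyway.
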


If $t<0$, the space $\mathsf{F}(\mathcal{V}_t)$ is no longer a subspace of $H^2=\mathsf{F}(L^2(0,\infty))$, 
since $\mathcal{V}_t$ contains $L^2(t,-t)$ 
as found in the proof of Lemma \ref{lem_6_2}, 
in particular, $\mathsf{F}(\mathcal{V}_t)$ can not be a model subspace, 
but the right-hand side of \eqref{190706_1} can be extended to negative $t$'s. 
We found above that $\Theta(t,z)$ is an inner function in $\C_+$. 
The kernel of $\mathcal{K}(\Theta(t,z))$ is 
\[
\aligned 
J(t;z,w) 
& = 
\frac{\overline{A(z)}B(w)-A(w)\overline{B(z)}}{\pi(w-\bar{z})} \cos(t(w-\bar{z})) \\
& \quad 
-(\overline{A(z)}A(w)+\overline{B(z)}B(w)) \frac{\sin(t(w-\bar{z}))}{\pi(w-\bar{z})}
\endaligned 
\]
by \eqref{190711_1}. 
If $\theta_1$ and $\theta_2$ are inner functions in $\C_+$, 
$\mathcal{K}(\theta_1\theta_2) = \mathcal{K}(\theta_2) \oplus \theta_2 \mathcal{K}(\theta_1)$
by \cite[Lemma 2.5]{KW05}. Hence, 
\[ 
\frac{E(t,z)}{E(z)}\mathcal{K}(\Theta(t,z)) 
= e^{itz}( \mathcal{K}(\Theta_E) \oplus \Theta {\rm PW}_{-2t}), 
\]
where ${\rm PW}_a=\mathcal{K}(e^{iaz})$ ($a>0$) is the Paley--Wiener space, 
which consists of the entire functions of exponential type at most $a$ 
the restrictions of which to the real line $\R$ are in $L^2(\R)$. 
Therefore, $\mathsf{F}(\mathcal{V}_t)$ is a shift of a model subspace. 

%
%
\section{Related differential equations} \label{section_8} 
%
%

From Theorem \ref{thm_5} and the second equation of \eqref{formula_02} and \eqref{formula_02_b}, 
we find that $\Phi(t,x)$ and $\Psi(t,x)$ are characterized as the unique solution 
of the Cauchy problem:
\begin{equation} \label{s904}
\left\{
\aligned
\,& \Phi_t(t,x) + \gamma(t) \Psi_x(t,x)=0, 
\quad \Psi_t(t,x) + \gamma(t)^{-1} \Phi_x(t,x)=0, \\[4pt]
\,& \gamma(t)= \Psi(t,t)/\Phi(t,t) \,( = \Phi(t,t)^{-2} = \Psi(t,t)^2 ), \\
\,& \Phi(0,x) = 1 - \int_{0}^{x} K(y) \, dy, \quad  \Psi(0,x) = 1 + \int_{0}^{x} K(y) \, dy
\endaligned
\right. 
\quad 
\end{equation}
for $(t,x) \in [0,\tau) \times \R$. 
In this formulation, (K5) is understood as a statement about the existence of a global solution.
We should remark that $\gamma(t)$ is not a given function in \eqref{s904} 
different from usual Cauchy problem 
for hyperbolic first-order systems. 
It would be interesting to study the inverse problem for Hamiltonian systems 
in terms of this (unusual) Cauchy problem. 

On the other hand, if we note that $\Phi(t,x)$ and $\Psi(t,x)$ have the second derivative for both variables 
by Propositions \ref{prop_190620_2} and \ref{prop_190706_1}, 
we find that they satisfy the following 
damped wave equations (or wave equations with time-dependent dissipation)  
\begin{equation*}
\left\{
\aligned
\,& \Phi_{tt}(t,x) - \Phi_{xx}(t,x) - 2 \mu(t) \Phi_t(t,x)=0, \\
\,& \Psi_{tt}(t,x) - \Psi_{xx}(t,x) + 2 \mu(t) \Psi_t(t,x)=0 
\endaligned
\right.
\end{equation*}
by the first line of \eqref{s904}, where $2\mu(t)=\gamma(t)'/\gamma(t)$. 
Moreover, definition \eqref{eq_190625_1} derives the Schr{\"o}dinger equations   
\begin{equation*}
\left\{
\aligned
\,& A_{tt}(t,z) + z^2A(t,z) - 2 \mu(t) A_t(t,z)=0, \\
\,& B_{tt}(t,z) + z^2B(t,z) + 2 \mu(t) B_t(t,z)=0.  
\endaligned
\right.
\end{equation*}
These are of course directly proved by Theorem \ref{thm_1}\,(3). 
Taking $z=0$ in Theorem \ref{thm_1}\,(3), we have 
$A(t,0)=A(0)=E(0)$ and $B(t,0)=0$ for each $t<\tau$. 
Therefore, if $E(0)=A(0)\not=0$ and (K5) holds for $\tau=\infty$, 
$j(t;0,z) = (\pi z E(z))^{-1}B(t,z) \to 0$ as $t \to \infty$ 
by Theorem \ref{thm_4}\,(2). 
Hence $B(t,z) \to 0$ as $t \to \infty$, 
and $\Phi(t,x) \to 0$ as $t \to \infty$. 
This fact would be interesting if we recall the following J. Wirth's results. 
He studied the Cauchy
problem for a damped wave equation 
$u_{tt} - u_{xx} + bu_t=0$, 
$u(0,\cdot)=u_1$, $u_t(0,\cdot)=u_2$ 
with positive time-depending dissipation $b=b(t)$. 
He showed that if $tb(t) \to \infty$ as $t \to \infty$, 
$1/b \in L^1(0,\infty)$, 
$u_1 \in W^{s,2}$, and $u_2 \in W^{s-1,2}$,  
the solution $u(t,x)$ tends in $W^{s,2}$ 
to a real analytic function 
$u(\infty,x)=\lim_{t \to \infty}u(t,x)$, 
which is nonzero except at most one $u_2$ for each $u_1$, 
where $W^{s,2}$ is the Sobolev space on $(0,\infty)$ (\cite[p. 76, Result 3]{Wir07}).  
From the result of Wirth, it is naturally asked 
whether $A(t,z)$ or $\Psi(t,z)$ tends to functions as $t \to \infty$. 
This problem is also interesting from the view point of the so-called connection formula 
for solutions of canonical systems: ${}^{\rm t}[A(t,z), B(t,z)] =M(t,s;z){}^{\rm t}[A(s,z), B(s,z)]$, 
but nothing is known in general about the behavior of $A(t,z)$ or $\Phi(t,z)$ when $t \to \infty$. 
If $E(z)$ satisfies $(K1)\sim (K5)$ with $\tau=\infty$, 
belongs  to the P{\'o}lya class, and $E(0)\not=0$, 
it is shown that $\lim_{t \to \infty}A(t,z)=E(0)$ 
by \cite[Theorem 41]{MR0229011}.

%

%

\bigskip \noindent
\\
Department of Mathematics, 
School of Science, \\
Tokyo Institute of Technology \\
2-12-1 Ookayama, Meguro-ku, 
Tokyo 152-8551, JAPAN  \\
Email: {\tt msuzuki@math.titech.ac.jp}

\end{document}